\newtheorem{theorem}{Theorem}[section]
\newtheorem{corollary}{Corollary}[section]
\newtheorem{definition}{Definition}[section]
\newtheorem{lemma}{Lemma}[section]
\newtheorem{proposition}{Proposition}[section]
\newtheorem{remark}{Remark}[section]
\numberwithin{equation}{section}
\newcommand{\B}{\mathbb{B}}
\newcommand{\cB}{\mathcal{B}}
\newcommand{\D}{\mathbb{D}}
\newcommand{\E}{\mathbb{E}}
\newcommand{\cF}{\mathcal{F}}
\newcommand{\G}{\mathbb{G}}
\newcommand{\cG}{\mathcal{G}}
\newcommand{\cH}{\mathcal{H}}
\newcommand{\N}{\mathbb{N}}
\renewcommand{\P}{\mathbb{P}}
\newcommand{\Q}{\mathbb{Q}}
\newcommand{\R}{\mathbb{R}}
\renewcommand{\S}{\mathbb{S}}
\newcommand{\T}{\mathbb{T}}
\newcommand{\U}{\mathbb{U}}
\newcommand{\V}{\mathbb{V}}
\newcommand{\cX}{\mathcal{X}}
\newcommand{\cY}{\mathcal{Y}}
\newcommand{\Z}{\mathbb{Z}}
\newcommand{\Sd}{\S_{d-1}}
\DeclareMathOperator*{\inte}{int}
\newcommand{\prop}{\textsc{Proposition }}
\newcommand{\props}{\textsc{Propositions }}
\newcommand{\defi}{\textsc{Definition }}
\newcommand{\cor}{\textsc{Corollary }}
\newcommand{\thm}{\textsc{Theorem }}
\newcommand{\lemme}{\textsc{Lemma }}
\newcommand{\remk}{\textsc{Remark }}
\renewcommand{\phi}{\varphi}
\newcommand{\eps}{\varepsilon}
\newcommand{\dsp}{\displaystyle}
\newcommand{\1}{\mathds{1}}
\newcommand{\abs}[1]{\lvert #1\rvert}
\newcommand{\interi}[1]{\inte\left( #1\right)}
\newcommand{\norme}[1]{\left\Vert #1\right\Vert}
\newcommand{\sca}[2]{\ensuremath{\langle #1 , #2\rangle}}
\newcommand{\cond}[1]{\ensuremath{\textbf{(A}_{\textbf{#1}}\textbf{)}}}
\newcommand{\condp}[1]{\ensuremath{\textbf{(A}'_{\textbf{#1}}\textbf{)}}}
\newcommand{\condpm}[1]{\ensuremath{\textbf{(A}^{#1}_{\textbf{0}}\textbf{)}}}
\newcommand{\condpsi}[1]{\ensuremath{\textbf{(A}^{#1}_{\textbf{0},\Psi}\textbf{)}}}
\title{Convergence of Multivariate Quantile Surfaces}
\author{
 \textsc{Adil Ahidar-Coutrix}\footnote{\texttt{adil.ahidar@math.univ-toulouse.fr}}
\qquad
  \textsc{Philippe Berthet}\footnote{\texttt{philippe.berthet@math.univ-toulouse.fr}} 
  \\
  Institut de Math\'{e}matiques de Toulouse
\\
   Universit\'{e} Paul Sabatier
}
\begin{document}

\maketitle
\abstract{
We define the quantile set of order $\alpha \in \left[ 1/2,1\right) $
associated to a law $P$ on $\mathbb{R}^{d}$ to be the collection of its
directional quantiles seen from an observer $O\in \mathbb{R}^{d}$. Under
minimal assumptions these star-shaped sets are closed surfaces, continuous
in $(O,\alpha )$ and the collection of empirical quantile surfaces is
uniformly consistent.\ Under mild assumptions -- no density or symmetry  is required for 
$P$ -- our uniform central limit theorem reveals the correlations between
quantile points and a non asymptotic Gaussian approximation provides joint
confident enlarged quantile surfaces. Our main result is a dimension free rate $n^{-1/4} (\log n)^{1/2}(\log\log n) ^{1/4} $ of Bahadur-Kiefer embedding by the empirical
process indexed by half-spaces. These limit theorems sharply generalize the
univariate quantile convergences and fully characterize the joint behavior
of Tukey half-spaces.}

\section{Introduction}

\subsection{Short presentation}

Let $\left\{ X_{n}\right\} $ be a sequence of independent random vectors in $%
\mathbb{R}^{d}$ defined on a probability space $(\Omega ,\mathcal{T},\mathbb{%
P})$ and having the same law $P=\mathbb{P}^{X}$. Many procedures in
multivariate data analysis have been proposed to picture out the structure
of the data cloud $X_{1},...,X_{n}$ and distinguish between inner points,
outer points and outliers. In particular it is worth mentioning generalized
quantiles (\cite{EinMas92},\cite{Kolt97},\cite{Pol97},\cite{Ser02}), data depth (\cite{HeWan97},\cite{Liu90},\cite{LiuSin93},\cite{SerZuo00a},\cite{SerZuo00b}), level sets (\cite{Pol95}), Tukey contours (\cite{DonGas92},\cite{KonMiz12},\cite%
{Nol92},\cite{Tuk75}), modal set estimation (\cite{BerEln05},\cite{Nol91},\cite{Pol95},\cite{Pol97}), $k$-means (\cite{CueMat88},\cite{CueGorMat97}), trimming (\cite{KosMos97},\cite{Nol92}, \cite{RaiKar11}), quantile regression (\cite{HalPainSin}) among many
others. The underlying generic problem is to infer about the mass localization
of $P$ in $\mathbb{R}^{d}$ -- modal regions, support, main mass directions.
Since probabilities and locations come into play together, the need of
multivariate quantiles arises naturally. Now, the univariate $\alpha $-th
quantile can be defined in many ways, hence as many multivariate
generalizations can be proposed in terms of points, vectors or sets
satisfying some equation involving $\alpha $.

The inference paradigm we promote below uses what we call quantile surfaces.
They are defined in a purely nonparametric way, always exist and satisfy
sharp convergence properties without too restrictive hypotheses. In this
paper we focus on quantile surfaces built from half-spaces probabilities, so
that our results can be applied to statistical procedures based on the
popular Tukey half-spaces. Flexible extensions are studied in
companion works, with applications to goodness of fit tests, depth
vector fields and Lorens-Gini and Wasserstein type distances.

The paper is organized as follows. In Section 1 we discuss motivation and
compare our approach to the main existing ones. In Section 2 we recall the limit theorems for
 univariate quantiles we intend to generalize. Then we provide
notation, definitions and basic properties of the deterministic and
empirical quantile surfaces, with a few illustrations and comments. Our
results are stated in Section 3. Section 4 is devoted to proving continuity, uniform consistency, uniform weak convergence, strong approximation and a dimension free Bahadur-Kiefer representation of quantile surfaces.

\subsection{Basic principles}It is important to point out that we depart from the following classical
ideas, which have been extensively exploited.

It seems commonly admitted that localizing mass requires first a well
defined mass center $M=M(P)$. On $\mathbb{R}$ the median corresponds to a robust central location $M$ from where nested
inter-quantiles intervals can grow up. In $\mathbb{R}^{d}$ it is then
tempting to characterize some median point $M$, typically through a global
minimization of some centrality expectation function. Seen from $M$ the
support of an unimodal $P$ can be divided into central, inner, outer and
extreme regions in a nested way. Such a contour description can be achieved
by two main basic principles.

The depth principle consists in associating a real value to each point $O\in 
\mathbb{R}^{d}$, with a maximum at some mass center $M$. The latter typically
depends on a notion of central or angular symmetry and
depth contours stand as level sets of some depth function depending on $P$
and $M$.

The quantile principle consists in associating a set of points to a value $%
\alpha \in \left( 0,1\right) $. Typical quantile sets are selected among a
small entropy collection of sets by means of argmax estimation, and
centering sets at $M$ helps making them nested like contours.

Outer spatial quantile sets or less deep contours are used to characterize
outliers and build trimmed areas before processing, for sake of robustness.\
Inner spatial quantile sets or deeper contours are used to depict central
regions of the support of $P$. In this spirit the depth axioms are
formalized in \cite{SerZuo00a}. Other approaches provide a similar
center-outward ordering of points.
Note that centered quantile sets have a probability $\alpha $ whereas depth
contours may or not rely on $\alpha $-th quantiles of some associated real
valued random variable. Even when $\alpha$ is not a probability, contours require a central median
point to cross directions. This is the case in \cite{Kolt97} where the inverse of
a multivalued function is used to represent directional quantiles.

\subsection{Motivation}

The limitations of the framework of quantile sets and depth contours
motivate our notion of arbitrarily anchored quantile surfaces.

Firstly, focusing on a unique mass center $M\in \mathbb{R}^{d}$ could be
misleading and excludes interesting cases like mixtures or low dimensional
supports. We would like to depict mass localization beyond the
center-outward case, with no need of any objective center $M=M(P)$. We thus
suggest to learn about $P$ by moving a subjective viewpoint $O\in \mathbb{R}%
^{d}$ -- like turning around a geometrical structure to see all faces rather
than observing it from a central point inside. If $P$ is $M$-symmetric then
all expected properties hold at $O=M$ and we recover radial quantiles.

Secondly, few limit theorems are available besides consistency compared to
the variety of proposed methods. We would like to generalize the sharpest
limit theorems on univariate quantiles. Using directional projections seen
from $O\in \mathbb{R}^{d}$ allows to go back to $\mathbb{R}$ and our main
contribution is to control them jointly.

Thirdly, known results hold under restrictive assumptions on $P$. In
particular, $P$ often has density and contiguous support or is regular with
respect to the indexing sets or a depth function.\ We would like to impose
no stronger assumptions than for univariate quantiles. Moreover in higher
dimension the statistical dependency of the coordinates of $X$ could make $P$
very concentrated around low dimensional manifolds or geometrical
structures, and such a sparsity means no density. Thus a special effort is
made to relax the density and support requirement.

Sometimes theoretical methods have unrealistic computational aspects.
Consider for instance plug-in procedures such as computing level sets after
a $d$-dimensional density estimation. The quantile surfaces we introduce are
quickly computed by orthogonal projections and confident bands follow from
our Gaussian approximation by tractable Monte-Carlo simulations.

Lastly, in our opinion a non reductive notion of $\alpha-$th quantile set in $\mathbb{R}^{d}$
should be $(d-1)$-dimensional and informative depth should be $d$%
-dimensional. This is what quantile surfaces and their depth vector fields
are.

\subsection{A new principle}

Imagine an observer located in $O\in \mathbb{R}^{d}$ looking at the sample $%
X_{1},...,X_{n}$ in all directions $u\in \mathbb{S}_{d-1}$ where $\mathbb{S}%
_{d-1}$ is the unit sphere of $\mathbb{R}^{d}$. Let him picture out the data
cloud in $\mathbb{R}^{d}$ from $O$ by drawing the collection of $u$%
-directional $\alpha $-th quantile point $Q_n(O,u,\alpha )=O+Y_n(O,u,\alpha)u$ where $Y_n(O,u,\alpha)$ is the univariate $%
\alpha $-th quantile of the projected sample $\left\langle
X_{i}-O,u\right\rangle $ on the oriented line $\left( O,u\right) $, and $%
\left\langle .,.\right\rangle $ is the inner product. We thus associate a
star-shaped quantile set $Q_n(O,\alpha )$ to every $(O,\alpha )\in \mathbb{R}%
^{d}\times (1/2,1)$. This is a multivariate quantile principle with no mass
center, no $\alpha $-mass quantile set and no global contour.

Under minimal assumptions the sets $Q(O,\alpha )$ associated to $P$ are nested surfaces starting at $O$
then extending toward modal areas. For fixed $O$, increasing $\alpha $
indicates main mass directions and concentrations. For fixed $\alpha $, the
deepest is $O$ the "smaller" is $Q(O,\alpha )$. This leads to new kinds of
depth. For instance a depth vector can be assigned to each $O$ by
integrating along the surface $Q(O,\alpha )$. Vectors of the resulting depth
field point to the main mass -- not always central or even multi-modal --
then rotate and grow longer as $\alpha $ increases. Appropriate limit
theorems are derived elsewhere 
from the forthcoming results.

Informative quantile multivariate data analysis can be performed by
moving $O$ and changing the projection rule $\varphi $. This new paradigm is
rich and can be stated as follows. Facing the fact that $\mathbb{R}^{d}$ is
not naturally ordered, one should simply admit subjectivity and collect
viewpoints. The statistical challenge is then to learn about $P$ by
comparing the surfaces $Q(O,\alpha )$ while changing $(O,\alpha )$ and $%
\varphi $.

Results don't depend on the observer $O$ only in the orthogonal projection
case, which is fully analyzed below. Our limit theorems are uniform in $%
(O,\alpha )$ and as sharp as for $d=1$, even when $P$ has no density or low
dimensional support. Essentially, we jointly control the
quantile processes $\left(\sqrt{n}\left(Y_n(O,u,\alpha)-Y(O,u,\alpha)\right)\right)$ associated to the projected samples $\left\langle
X_{i}-O,u\right\rangle $ in each direction $u\in \mathbb{S}_{d-1}$.  
The main result is an optimal and surprisingly dimension free Bahadur-Kiefer approximation (\cite{Bah66},\cite{Kie67},\cite{ShoWel86}). The most useful result is a non
asymptotic Brownian approximation.

The closest results we can compare with concern the Tukey contour (\cite{DonGas92}, \cite{Liu90},\cite{Tuk75}). This central region is the intersection of half-spaces having probability $\alpha $. The main
difference is that we study the location of Tukey half-spaces themselves
rather than their possibly empty intersection -- if $\alpha <d/d+1$, see \cite{DonGas92}--, in order to catch all the
statistical information. In \cite{Nol92} a central limit theorem is stated
for the empirical Tukey contour under strong regularity assumption on $P$ and
a mass center. We go further by proving results uniform in $\alpha $
together with rates, approximations and weaker assumptions.

\section{From quantiles to quantile surfaces}

\subsection{Univariate quantiles}

It is useful to recall the limiting behavior of the univariate
quantile process since our goal is to obtain similar results jointly for a $d
$-dimensional collection of real random samples, each being strongly
dependent of the others, namely $Y_{n}=\left\langle X_{n}-O,u\right\rangle $
where $X_{n}\in \mathbb{R}^{d}$, $O\in \mathbb{R}^{d}$, $u\in \mathbb{S}%
_{d-1}$.
Consider on $(\Omega ,\mathcal{T},\mathbb{P})$ a sequence $\left\{
Y_{n}\right\} $ of independent copies of a real random variable $Y$. Write,
for $y\in \mathbb{R}$ and $\alpha \in \left( 0,1\right) $, $
F_{Y}(y)=\mathbb{P}(Y\leqslant y)$, $ F_{Y}^{-1}(\alpha )=\inf \left\{
y\in \mathbb{R}:F_{Y}(y)\geq \alpha \right\}$
and $\delta _{y}$ the Dirac mass at $y$. Define the empirical measure $%
P_{n}=\sum_{i\leq n}\delta _{Y_{i}}/n$, the empirical distribution function $%
F_{n}=P_{n}((-\infty ,y])$ and the empirical quantile function $%
F_{n}^{-1}(\alpha )=\inf \left\{ y\in \mathbb{R}:F_{n}(y)\geq \alpha
\right\} $, $\alpha \in \left( 0,1\right) $.

Two problems make the estimation of $F_{Y}^{-1}$ a not so easy task. First, $F_{n}^{-1}(\alpha _{0})$ is not consistent if $F_{Y}^{-1}$ is not
continuous at $\alpha _{0}$ . Second, if $S_{Y}$ is
unbounded then $\sup_{\alpha \in \left[ 0,1\right] }\left\vert
F_{n}^{-1}(\alpha )-F_{Y}^{-1}(\alpha )\right\vert =+\infty $ so that tail
quantiles of $F$ cannot be estimated by using extreme values without extra
hypotheses and appropriate truncation see \cite{CsoRev78, CsoRev81, ShoWel86}. We won't consider this situation here. Let $\Delta =\left[ \alpha ^{-},\alpha ^{+}\right] $\ where $0<\alpha
^{-}\leq\alpha ^{+}<1$.
\begin{proposition}[Uniform consistency]\label{prop2.1} \textit{If }$F_{Y}$\textit{\ is continuous on }$%
F_{Y}^{-1}(\Delta )$\textit{\ then}%
\begin{equation}
\lim_{n\rightarrow \infty }\ \sup_{\alpha \in \Delta }\ \left\vert
F_{n}^{-1}(\alpha )-F_{Y}^{-1}(\alpha )\right\vert =0\quad a.s.
\label{LGNU-R}
\end{equation}%
\textit{if, and only if, }$F_{Y}^{-1}$\textit{\ is continuous on }$\Delta $%
\textit{. This remains true for }$\Delta =(0,1)$\textit{\ if }$F_{Y}^{-1}((0,1))$\textit{%
\ is bounded.}
\end{proposition}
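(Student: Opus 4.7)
My plan is to reduce the statement to the classical Glivenko--Cantelli theorem $\Omega_n := \sup_{y\in\R}|F_n(y)-F_Y(y)|\to 0$ a.s., lifted through the Galois identity
\[
F_n^{-1}(\alpha)\leq y \iff \alpha\leq F_n(y),
\]
and its analogue for $F_Y$, valid because $F_Y$ is right-continuous. The two hypotheses are complementary: continuity of $F_Y$ on $F_Y^{-1}(\Delta)$ rules out jumps of $F_Y$ at the quantile heights in $\Delta$, while continuity of $F_Y^{-1}$ on $\Delta$ rules out flat regions of $F_Y$ at those heights. Together they make $F_Y$ strictly increasing across each $F_Y^{-1}(\alpha)$, $\alpha\in\Delta$, and the proof is about quantifying this uniformly.

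For the sufficiency direction I will fix $\eps>0$ and consider
\[
\eta(\eps) := \inf_{\alpha\in\Delta}\min\bigl(F_Y(F_Y^{-1}(\alpha)+\eps)-\alpha,\ \alpha-F_Y(F_Y^{-1}(\alpha)-\eps)\bigr).
\]
Each term inside is strictly positive pointwise under the two continuity assumptions, and I will show $\eta(\eps)>0$ by compactness: $\alpha\mapsto F_Y^{-1}(\alpha)$ is continuous on the compact $\Delta$, and $\alpha\mapsto F_Y(F_Y^{-1}(\alpha)\pm\eps)$ inherit enough semi-continuity from monotonicity and right-continuity of $F_Y$ for the infimum to be attained. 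On the almost sure event $\{\Omega_n\leq \eta(\eps)/2\}$, valid for $n$ large, the Galois duality then yields $|F_n^{-1}(\alpha)-F_Y^{-1}(\alpha)|\leq \eps$ uniformly in $\alpha\in\Delta$.

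For the necessity I will argue by contrapositive. If $F_Y^{-1}$ is discontinuous at some $\alpha_0\in\Delta$, then since $F_Y^{-1}$ is left-continuous and non-decreasing the jump is a right-jump, and there exist $y_1<y_2$ with $F_Y^{-1}(\alpha_0)=y_1$ and $F_Y\equiv \alpha_0$ on $[y_1,y_2)$. Continuity of $F_Y$ at $y_1\in F_Y^{-1}(\Delta)$ ensures that $P$ charges no point of $[y_1,y_2)$, so $F_n$ is constant on this interval a.s.; a law of iterated logarithm argument applied to the centered binomial $n(F_n(y_1)-\alpha_0)$ gives $F_n(y_1)<\alpha_0$ infinitely often a.s., which forces $F_n^{-1}(\alpha_0)\geq y_2$ on that subsequence and hence $\limsup_n|F_n^{-1}(\alpha_0)-F_Y^{-1}(\alpha_0)|\geq y_2-y_1>0$ a.s.

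Finally, the extension to $\Delta=(0,1)$ under boundedness of $F_Y^{-1}((0,1))\subset[a,b]$ is handled by noting that $P$ is supported in $[a,b]$ and $F_Y^{-1}$ extends continuously by $a$ at $0^+$ and $b$ at $1^-$. Given $\eps>0$, I will choose $0<\alpha^-<\alpha^+<1$ with $F_Y^{-1}((0,\alpha^-])\subset[a,a+\eps]$ and $F_Y^{-1}([\alpha^+,1))\subset[b-\eps,b]$, apply the compact case to $[\alpha^-,\alpha^+]$, and use monotonicity of $F_n^{-1}$ together with the a.s.\ convergences $X_{(1)}\to a$ and $X_{(n)}\to b$ to handle the two boundary strips. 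I expect the main obstacle to be the uniform positivity $\eta(\eps)>0$: pointwise positivity is immediate from the two continuity hypotheses, but ruling out degeneracy simultaneously over every $\alpha\in\Delta$ is exactly where the compactness of $\Delta$ and the coupled regularity of $F_Y$ and $F_Y^{-1}$ must combine.
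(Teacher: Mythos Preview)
Your proposal is correct and follows essentially the same route as the paper. For sufficiency, the paper (in its proof of \thm \ref{LGNU}, to which the proof of \prop \ref{prop2.1} defers) introduces the band-probability function $\Psi(\eps)=\inf_{B\in\mathcal{B}_\eps}P(B)$, shows $\Psi(\eps)>0$ under the hypotheses, and derives a contradiction from Glivenko--Cantelli if consistency fails; your $\eta(\eps)$ is precisely the $d=1$ version of $\Psi$, and your direct Galois argument on the event $\{\Omega_n\leq \eta(\eps)/2\}$ is the contrapositive of the paper's contradiction. For necessity, both you and the paper run the identical LIL argument at a discontinuity $\alpha_0$. One minor remark: in the necessity direction you do not actually need continuity of $F_Y$ at $y_1$ to conclude $F_Y(y_1)=\alpha_0$; the existence of a right jump of $F_Y^{-1}$ at $\alpha_0$ already forces this (otherwise $F_Y^{-1}(\alpha)=y_1$ for $\alpha$ slightly above $\alpha_0$, and there would be no jump), which is why the paper does not invoke that hypothesis there.
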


\begin{proof}
If $F_{Y}^{-1}$ is continuous on $\Delta $ see Section 4.1 where the proof
is not classical even for $d=1$. Conversely, if $F_{Y}^{-1}$ is not
continuous at $\alpha _{0}\in \left( 0,1\right) $ we\ almost surely have $%
\lim \sup_{n\rightarrow \infty }\left\vert F_{n}^{-1}(\alpha
_{0})-F_{Y}^{-1}(\alpha _{0})\right\vert >0$. To see this, observe that $%
F_{Y}^{-1}(\alpha _{0})=y_{0}<y_{1}=\lim_{\alpha \downarrow \alpha
_{0}}F_{Y}^{-1}(\alpha )$\ implies $\mathbb{P}(Y\in \left(
y_{0},y_{1}\right) )=0$ thus, with probability one, we have $\inf_{n}\inf
\left\{ Y_{i}>y_{0}:i\leqslant n\right\} \geqslant y_{1}$ and also $%
F_{n}(y_{0})<\alpha _{0}$ infinitely often, since by the law of the iterated
logarithm it holds%
\begin{equation*}
\underset{n\rightarrow \infty }{\lim \inf }\ \frac{\sqrt{n}%
(F_{n}(y_{0})-\alpha _{0})}{\sqrt{2\alpha _{0}(1-\alpha _{0})\log \log n}}%
=-1\quad a.s.
\end{equation*}%
therefore $F_{n}^{-1}(\alpha _{0})\geqslant y_{1}$ happens infinitely often,
and the above $\lim \sup $ is bounded from below by $y_{1}-y_{0}>0$.
\end{proof}

In order to establish the weak convergence of quantiles a well behaved
density is needed. Assume that $Y$ has density $f_{Y}>0$ on $F_{Y}^{-1}((0,1))$ and define the so-called density quantile function
to be 
\begin{equation}
h_{Y}=f_{Y}\circ F_{Y}^{-1}.  \label{DQF}
\end{equation}%
Note that $h_{Y}$ is translation invariant since for all $a,b\in \mathbb{R}%
_{\ast }$ it holds $h_{aY+b}=h_{Y}/\left\vert a\right\vert $. Also, $1/h_{Y}$
is the quantile density function. Few hypotheses on $h_{Y}$ are required
when considering quantiles of order $\Delta $ instead of $(0,1)$, thus
avoiding controlling tails. 

Let \textit{$\mathcal{D}$}$(\Delta )$ be the set of left continuous
functions on $\Delta $ endowed either with the Skorokod topology and Borel
sigma field or with the sup-norm topology and the sigma field generated by
open balls. A sufficient condition for the Donsker type convergence is the following.

\textbf{(H) }\textit{There exists an open set }$\Delta _{0}$\textit{\ such
that }$\Delta \subset \Delta _{0}$\textit{\ and }$f_{Y}$\textit{\ is
differentiable on }$S_{0}=F_{Y}^{-1}(\Delta _{0})$\textit{\ with }$%
\inf_{S_{0}}f_{Y}>0$ and $\sup_{S_{0}}\left\vert f_{Y}^{\prime }\right\vert
<\infty $\textit{.}

\begin{proposition}[Uniform Central Limit Theorem]\label{prop2.2} Under \textbf{(H)}\textit{\ the sequence of
weighted quantile processes }$\sqrt{n}\left( F_{n}^{-1}-F_{Y}^{-1}\right)
h_{Y}$ \textit{indexed by }$\Delta $\textit{\ weakly converges on $\mathcal{D%
}$}$(\Delta )$\textit{\ to the Brownian Bridge }$B$\textit{\ restricted to }$%
\Delta $\textit{.}
\end{proposition}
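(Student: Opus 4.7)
The plan is to reduce the weighted quantile process to the uniform empirical process via the probability integral transform, apply Donsker's classical theorem to the latter, and use \textbf{(H)} to control the linearization error uniformly on $\Delta$. On a suitably enriched probability space, realize $Y_i=F_Y^{-1}(U_i)$ with $U_1,\ldots,U_n$ i.i.d.\ uniform on $(0,1)$, so that $F_n^{-1}(\alpha)=F_Y^{-1}(G_n^{-1}(\alpha))$ almost surely, where $G_n$ denotes the uniform empirical distribution function. The goal is then to linearize $F_Y^{-1}$ around each $\alpha\in\Delta$.

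Under \textbf{(H)}, the map $F_Y^{-1}$ is twice continuously differentiable on the open set $\Delta_0\supset\Delta$, with first derivative $1/h_Y$ and a uniform bound
\begin{equation*}
\sup_{\alpha\in\Delta_0}\bigl|(F_Y^{-1})''(\alpha)\bigr|\le \frac{\sup_{S_0}|f_Y'|}{(\inf_{S_0}f_Y)^{3}}<\infty.
\end{equation*}
A first-order Taylor expansion yields, for some absolute constant $C>0$,
\begin{equation*}
F_n^{-1}(\alpha)-F_Y^{-1}(\alpha)=\frac{G_n^{-1}(\alpha)-\alpha}{h_Y(\alpha)}+R_n(\alpha),\qquad |R_n(\alpha)|\le C\,\bigl(G_n^{-1}(\alpha)-\alpha\bigr)^2,
\end{equation*}
and multiplying by $\sqrt{n}\,h_Y(\alpha)$ gives
\begin{equation*}
\sqrt{n}\bigl(F_n^{-1}(\alpha)-F_Y^{-1}(\alpha)\bigr)\,h_Y(\alpha)=\sqrt{n}\bigl(G_n^{-1}(\alpha)-\alpha\bigr)+\sqrt{n}\,h_Y(\alpha)R_n(\alpha).
\end{equation*}

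Since $\sup_{\alpha\in\Delta}|G_n^{-1}(\alpha)-\alpha|=O_{\mathbb{P}}(n^{-1/2})$ by the Dvoretzky--Kiefer--Wolfowitz inequality and $h_Y$ is bounded on $\Delta$, the remainder term $\sqrt{n}\,h_Y(\alpha)R_n(\alpha)$ is $O_{\mathbb{P}}(n^{-1/2})$ uniformly in $\alpha\in\Delta$. The classical uniform Bahadur representation for uniform order statistics,
\begin{equation*}
\sup_{\alpha\in\Delta}\bigl|\sqrt{n}(G_n^{-1}(\alpha)-\alpha)+\beta_n(\alpha)\bigr|\xrightarrow[n\to\infty]{\mathbb{P}}0,\qquad \beta_n(\alpha):=\sqrt{n}(G_n(\alpha)-\alpha),
\end{equation*}
combined with Donsker's theorem giving $\beta_n\Rightarrow B$ on $\mathcal{D}(\Delta)$, then identifies the weak limit as $-B$, which has the same distribution as the Brownian bridge restricted to $\Delta$.

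The main obstacle is the uniform smallness of $R_n$ together with the Bahadur-type identity above; the compactness of $\Delta$ inside $(0,1)$ and the two-sided control $0<\inf_{S_0}f_Y\le\sup_{S_0}(|f_Y|+|f_Y'|)<\infty$ from \textbf{(H)} are precisely what rule out the tail pathologies mentioned before this proposition. All other ingredients are routine weak-convergence arguments; the genuine work in the sequel will be to reproduce this scheme jointly in the direction $u\in\mathbb{S}_{d-1}$ and the observer $O\in\mathbb{R}^d$.
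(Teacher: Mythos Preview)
Your argument is correct. It follows the classical route: the quantile transform $Y_i=F_Y^{-1}(U_i)$ gives $F_n^{-1}=F_Y^{-1}\circ G_n^{-1}$, a second-order Taylor expansion of $F_Y^{-1}$ (legitimate once $G_n^{-1}(\Delta)\subset\Delta_0$, which holds with probability tending to one) linearizes the composition, and the uniform Bahadur representation plus Donsker finish the job.

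The paper takes a deliberately different path. It obtains Proposition~\ref{prop2.2} as the $d=1$ specialization of Theorem~\ref{uclt}, whose proof establishes the Bahadur--Kiefer representation \emph{directly} for the original distribution, with no reduction to the uniform case: the key step controls the increments of the empirical process indexed by half-spaces via Talagrand's inequality and VC moment bounds (Proposition~\ref{lemmeAlphaN}). The paper is explicit about the reason for this choice: the identity $F_n^{-1}=F_Y^{-1}\circ G_n^{-1}$ has no analogue in $\mathbb{R}^d$ --- there is no probability integral transform sending $P$ to a fixed reference law while preserving the half-space structure --- so your scheme, though shorter and more elementary on $\mathbb{R}$, does not extend to the directional setting $(O,u)\in\mathbb{R}^d\times\mathbb{S}_{d-1}$, contrary to what your closing sentence suggests. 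What the paper's approach buys is that the same empirical-process machinery handles all directions simultaneously; what yours buys is a cleaner one-dimensional proof under \textbf{(H)} that avoids the heavier concentration tools.
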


\begin{proof}
This is \thm 3.2 when $d=1$. The differentiability assumption \textbf{(H)}
corresponds to \textbf{(A4)} in Section 3 and is weakened into \textbf{(A2)}.
\end{proof}

The convergence of finite dimensional marginals immediately follows, and
helps understanding the covariance structure of our multivariate quantiles.

\begin{corollary}\label{cor2.1}
\textit{Fix }$0<\alpha _{1}<...<\alpha _{k}<1$\textit{. If }$f_{Y}$\textit{\
is continuous and away from zero on some neighborhood of }$\left\{ \alpha
_{1},...,\alpha _{k}\right\} $\textit{\ then}%
\begin{equation}
\sqrt{n}\left( 
\begin{array}{c}
F_{n}^{-1}\left( \alpha _{1}\right) -F_{Y}^{-1}\left( \alpha _{1}\right)  \\ 
... \\ 
F_{n}^{-1}\left( \alpha _{k}\right) -F_{Y}^{-1}\left( \alpha _{k}\right) 
\end{array}%
\right)\underset{n\to \infty}{\overset{\mathcal{L}aw}{\longrightarrow}}\mathcal{N}\left( 0_{k},\Sigma \right) ,\quad
\Sigma _{i,j}=\frac{\alpha _{i}\wedge \alpha _{j}-\alpha _{i}\alpha _{j}}{%
h_{Y}(\alpha _{i})h_{Y}(\alpha _{j})}.  \label{covunivar1}
\end{equation}
\end{corollary}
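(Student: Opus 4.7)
The plan is to reduce the $k$-dimensional marginal statement to the classical multivariate central limit theorem for the empirical distribution function at the fixed points $y_i = F_Y^{-1}(\alpha_i)$, via a local Bahadur representation at each $\alpha_i$. Because the hypothesis of the corollary is purely local around each $\alpha_i$, only a finite collection of one-dimensional Taylor expansions is needed, and the global differentiability imposed in Proposition 2.2 is not required.

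First I would invoke Proposition 2.1 applied to any small closed interval around $\alpha_i$ on which $F_Y^{-1}$ is continuous (which follows from $f_Y>0$ near $y_i$), and conclude $F_n^{-1}(\alpha_i)\to y_i$ almost surely for every $i$. On a neighborhood of $y_i$ where $f_Y$ is continuous, a first order Taylor expansion gives $F_Y(y)-\alpha_i = f_Y(y_i)(y-y_i)+o(y-y_i)$ as $y\to y_i$. Combined with the near-identity $F_n(F_n^{-1}(\alpha_i))=\alpha_i+O(1/n)$, subtracting $F_Y(F_n^{-1}(\alpha_i))$ yields
\[
(F_n-F_Y)(F_n^{-1}(\alpha_i)) = -f_Y(y_i)\bigl(F_n^{-1}(\alpha_i)-y_i\bigr) + o\bigl(F_n^{-1}(\alpha_i)-y_i\bigr) + O(1/n).
\]

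Next I would upgrade this into a genuine Bahadur representation by replacing $(F_n-F_Y)(F_n^{-1}(\alpha_i))$ with $(F_n-F_Y)(y_i)$. The standard oscillation control of the empirical process gives, for any deterministic $\eps_n\to 0$,
\[
\sup_{|y-y_i|\leq \eps_n} \left| (F_n-F_Y)(y) - (F_n-F_Y)(y_i) \right| = O_{\mathbb{P}}\!\left(n^{-1/2}\eps_n^{1/2}(\log n)^{1/2}\right).
\]
A first rough use of the previous display yields $|F_n^{-1}(\alpha_i)-y_i|=O_{\mathbb{P}}(n^{-1/2})$; plugging this back in with $\eps_n\sim n^{-1/2}$ produces
\[
\sqrt{n}\bigl(F_n^{-1}(\alpha_i)-y_i\bigr) = -\frac{\sqrt{n}\,(F_n-F_Y)(y_i)}{h_Y(\alpha_i)} + o_{\mathbb{P}}(1).
\]

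Finally I would apply the multivariate central limit theorem to the vector $\bigl(\sqrt{n}(F_n(y_i)-\alpha_i)\bigr)_{1\leq i\leq k}$, an average of i.i.d.\ centered random vectors with covariance $\mathbb{P}(Y\leq y_i\wedge y_j)-\alpha_i\alpha_j = \alpha_i\wedge\alpha_j-\alpha_i\alpha_j$. Slutsky's lemma together with the componentwise division by $-h_Y(\alpha_i)$ then delivers the announced Gaussian limit with covariance $\Sigma_{i,j}=(\alpha_i\wedge\alpha_j-\alpha_i\alpha_j)/(h_Y(\alpha_i)h_Y(\alpha_j))$. The only delicate step is the oscillation estimate: for finitely many marginals it is an elementary consequence of the Dvoretzky--Kiefer--Wolfowitz or Chibisov--O'Reilly type bounds, but the uniform-in-$(O,u,\alpha)$ version required later in the paper is precisely the content of the dimension-free Bahadur--Kiefer theorem announced in the abstract.
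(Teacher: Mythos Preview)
Your argument is correct, but it follows a different route from the paper's. The paper derives the corollary \emph{from} Proposition~2.2: under the stronger hypothesis \textbf{(H)} the functional convergence $\sqrt{n}(F_n^{-1}-F_Y^{-1})h_Y\Rightarrow B$ on $\mathcal{D}(\Delta)$ immediately yields the finite-dimensional limit with the Brownian-bridge covariance $\alpha_i\wedge\alpha_j-\alpha_i\alpha_j$, and the proof then simply remarks that the differentiability of $f_Y$ in \textbf{(H)} was only needed for uniform tightness on $\Delta$, hence is superfluous when $\{\alpha_1,\dots,\alpha_k\}$ is fixed. Your approach instead builds a pointwise Bahadur representation directly under the weaker local hypothesis, via consistency, a local Taylor expansion of $F_Y$, an oscillation bound for $F_n-F_Y$, and the multivariate CLT for $(\sqrt{n}(F_n(y_i)-\alpha_i))_i$.

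What each buys: the paper's route is shorter once Proposition~2.2 is in hand, but the relaxation from \textbf{(H)} to the stated local hypothesis is only asserted, not argued. Your route is self-contained and works \emph{directly} under the corollary's hypothesis, which is cleaner; it also makes explicit the mechanism (Bahadur linearization plus Slutsky) that the paper later generalizes uniformly in $(u,\alpha)$ in Theorem~3.3. A minor remark: your oscillation rate $O_{\mathbb{P}}(n^{-1/2}\eps_n^{1/2}(\log n)^{1/2})$ is more than needed here; the asymptotic equicontinuity of the Donsker empirical process already gives $o_{\mathbb{P}}(n^{-1/2})$ for any $\eps_n\to 0$, which suffices.
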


\begin{proof}
The limiting process $B$ is Gaussian, centered, with covariance function $%
cov(B(\alpha _{1}),B(\alpha _{2}))=\alpha _{1}\wedge \alpha _{2}-\alpha
_{1}\alpha _{2}$, $\alpha _{i}\in \left( 0,1\right) $. Thus (\ref{covunivar1}%
) holds under \textbf{(H)} with $\alpha ^{-}<\alpha _{1}<\alpha _{k}<\alpha
^{+}$. However the assumption on $f_{Y}^{\prime }$ is useless when $\left\{
\alpha _{1},...,\alpha _{k}\right\} $ are fixed, it serves in the proof of
\thm 3.2 for $d=1$ only to ensure uniform tightness on $\Delta $.
Likewize continuity of $f_{Y}$ is only required locally.
\end{proof}

A way to strengthen and prove \prop 2.2 is to make use of the
Hungarian construction. Starting from \cite{Kie70,CsoRev78,CsoRev81} this
strategy consists in using the quantile transform to control $\sqrt{n}\left( F_{n}^{-1}-F_{Y}^{-1}\right) h_{Y}$ by the easier to handle uniform quantile process uniformly on $\Delta $. Then by KMT (\cite{KMT75}) and
the representation of order statistics by partial sums of exponential random
variables, the latter can in turn be approximated at rate $(\log n)/\sqrt{n}$
by a sequence of Brownian Bridges built jointly.

\begin{proposition}[Gaussian Approximation]\textit{Assume that }\textbf{(H)} holds\textit{.
Then one can construct on the same probability space }$(\Omega ,\mathcal{T},%
\mathbb{P})$\textit{\ an i.i.d. sequence }$Y_{n}$\textit{\ with law }$F_{Y}$%
\textit{\ together with a sequence }$\left\{ B_{n}\right\} $\textit{\ of
standard Brownian Bridges in such a way that}%
\begin{equation*}
\underset{n\rightarrow \infty }{\lim \sup }\frac{\sqrt{n}}{\log n}%
\sup_{\alpha \in \Delta }\left\vert \sqrt{n}\left( F_{n}^{-1}(\alpha
)-F_{Y}^{-1}(\alpha )\right) -\frac{B_{n}(\alpha )}{h_{Y}(\alpha )}%
\right\vert <\infty \quad a.s.
\end{equation*}
\end{proposition}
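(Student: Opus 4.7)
My plan is to combine the quantile transform, the KMT strong approximation of \cite{KMT75}, and a second-order expansion of $F_Y^{-1}$ made available by \textbf{(H)}. First I would realize the sample through $Y_i=F_Y^{-1}(U_i)$ with $U_1,U_2,\dots$ i.i.d.\ uniform on $[0,1]$ jointly defined on $(\Omega,\mathcal{T},\mathbb{P})$. Since \textbf{(H)} forces $F_Y$ to be continuous and strictly increasing on $S_0=F_Y^{-1}(\Delta_0)$, one has $F_n^{-1}(\alpha)=F_Y^{-1}(G_n^{-1}(\alpha))$ for every $\alpha\in\Delta$, where $G_n$ and $G_n^{-1}$ denote the empirical distribution and quantile functions of $U_1,\dots,U_n$. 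Glivenko-Cantelli guarantees that $G_n^{-1}(\Delta)\subset\Delta_0$ almost surely for all sufficiently large $n$, so every subsequent computation takes place in the region where $F_Y^{-1}$ is smooth.

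Next I would expand. Assumption \textbf{(H)} makes $F_Y^{-1}$ of class $C^2$ on $\Delta_0$ with $(F_Y^{-1})'=1/h_Y$ and $(F_Y^{-1})''=-(f_Y'/f_Y^2)\circ F_Y^{-1}$ both bounded there, using $\inf_{S_0}f_Y>0$ and $\sup_{S_0}\abs{f_Y'}<\infty$. Taylor's formula with Lagrange remainder yields, uniformly for $\alpha\in\Delta$,
\begin{equation*}
F_n^{-1}(\alpha)-F_Y^{-1}(\alpha)=\frac{G_n^{-1}(\alpha)-\alpha}{h_Y(\alpha)}+R_n(\alpha),\qquad \abs{R_n(\alpha)}\le C\bigl(G_n^{-1}(\alpha)-\alpha\bigr)^{2}.
\end{equation*}
The LIL for the uniform quantile process on $\Delta$ gives $\sup_{\Delta}\abs{G_n^{-1}-\mathrm{Id}}=O(\sqrt{\log\log n/n})$ a.s., so $\sqrt{n}\,\sup_{\Delta}\abs{R_n}=O(\log\log n/\sqrt{n})$ a.s., well below the target rate.

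Then I would invoke KMT \cite{KMT75} to construct Brownian bridges $B_n$ on $(\Omega,\mathcal{T},\mathbb{P})$ with $\sup_{t\in[0,1]}\abs{\sqrt{n}(G_n(t)-t)-B_n(t)}=O((\log n)/\sqrt{n})$ a.s. The transfer from the empirical to the quantile embedding, carried out by Csörgő and Révész \cite{CsoRev78,CsoRev81} on any compact $\Delta\subset(0,1)$, produces
\begin{equation*}
\sup_{\alpha\in\Delta}\bigl|\sqrt{n}(G_n^{-1}(\alpha)-\alpha)+B_n(\alpha)\bigr|=O((\log n)/\sqrt{n})\quad\text{a.s.}
\end{equation*}
Multiplying the Taylor identity by $\sqrt{n}\,h_Y(\alpha)$, substituting this display, and dividing back by $h_Y(\alpha)\ge\inf_{\Delta}h_Y>0$ delivers the claim, after renaming $-B_n$ as $B_n$ (still a Brownian bridge).

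The main obstacle is the last step: the naive Bahadur-Kiefer identity $\beta_n\approx-\alpha_n$ for uniforms carries an error of order $n^{-1/4}(\log n)^{1/2}(\log\log n)^{1/4}$, far worse than the targeted $(\log n)/\sqrt{n}$. Reaching the KMT rate for the quantile embedding requires applying KMT directly to the exponential partial-sum representation of uniform order statistics in the spirit of Kiefer \cite{Kie70} and \cite{CsoRev78,CsoRev81}, rather than inverting the embedded empirical process; this is where all the work sits. Secondarily, the control of $R_n$ in Step 2 uses \textbf{(H)} in an essential way, through the $C^2$ regularity of $F_Y^{-1}$ inherited from a strictly positive density with bounded derivative on a neighborhood of the quantile range.
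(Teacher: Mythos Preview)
Your proposal is correct and follows the same route the paper points to: the paper's proof is simply the citation ``See \cite{CsoRev81}'', and the surrounding text explicitly describes the strategy you sketch---quantile transform to reduce to the uniform quantile process, then KMT \cite{KMT75} combined with the representation of uniform order statistics by partial sums of exponentials to reach the $(\log n)/\sqrt{n}$ rate. You correctly identify the essential subtlety, namely that inverting the KMT embedding of the uniform empirical process only yields the Bahadur--Kiefer rate $n^{-1/4}(\log n)^{1/2}(\log\log n)^{1/4}$, and that the optimal rate requires applying KMT directly to the exponential partial-sum representation as in \cite{Kie70,CsoRev78,CsoRev81}; this is exactly the mechanism the paper alludes to and the content of the cited reference.
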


\begin{proof}
See \cite{CsoRev81}. Assumption \textbf{(H)} is weakened into \textbf{(A3)}
at \thm 3.3.
\end{proof}

This approach can not be generalized to our quantile surfaces since no
quantile transform or partial sum representation hold in $\mathbb{R}^{d}$.
Fortunately, a second strategy works on $\mathbb{R}$. It is based on the
Bahadur-Kiefer approximation of the quantile process by the empirical
process at rate $$b_{n}=n^{-1/4}(\log n)^{1/2}(\log \log n)^{1/4}.$$
\begin{proposition}[Bahadur-Kiefer Approximation]\label{prop2.4} \textit{Under} \textbf{(H)}\textit{\ we have}%
\begin{equation*}
\underset{n\rightarrow \infty }{\lim \sup }\frac{1}{b_{n}}\sup_{\alpha \in
\Delta }\left\vert \sqrt{n}\left( F_{n}^{-1}(\alpha )-F_{Y}^{-1}(\alpha
)\right) +\sqrt{n}\left( \frac{F_{n}(F_{Y}^{-1}(\alpha ))-\alpha }{%
h_{Y}(\alpha )}\right) \right\vert <\infty \quad a.s.
\end{equation*}
\end{proposition}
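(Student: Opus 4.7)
The plan is to reduce the claim to the classical Bahadur--Kiefer representation for the uniform quantile process by means of the probability integral transform and a second-order Taylor expansion, as in the strategy behind \cite{CsoRev81,ShoWel86}. Concretely, set $U_i = F_Y(Y_i)$; since \textbf{(H)} forces $F_Y$ to be continuous on $S_0 = F_Y^{-1}(\Delta _0)$, the $U_i$ are i.i.d. uniform on $[0,1]$. Denote by $G_n$ their empirical distribution function and by $G_n^{-1}$ its generalized inverse. The standard monotonicity identities give
$$F_n^{-1}(\alpha) = F_Y^{-1}(G_n^{-1}(\alpha)), \qquad F_n(F_Y^{-1}(\alpha)) = G_n(\alpha),$$
so both terms in the Bahadur--Kiefer remainder can be rewritten in terms of $G_n - \mathrm{Id}$ and $G_n^{-1} - \mathrm{Id}$.

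Next I Taylor-expand. Since $(F_Y^{-1})'(\alpha) = 1/h_Y(\alpha)$ and $(F_Y^{-1})''(\alpha) = -f_Y'(F_Y^{-1}(\alpha))/h_Y(\alpha)^3$, assumption \textbf{(H)} makes $(F_Y^{-1})''$ uniformly bounded on $\Delta_0$. Consequently, for any $\alpha \in \Delta$ and all $n$ large enough so that $G_n^{-1}(\alpha) \in \Delta_0$ (which holds eventually by Glivenko--Cantelli), one has
$$F_Y^{-1}(G_n^{-1}(\alpha)) - F_Y^{-1}(\alpha) = \frac{G_n^{-1}(\alpha) - \alpha}{h_Y(\alpha)} + O\bigl((G_n^{-1}(\alpha) - \alpha)^2\bigr),$$
uniformly in $\alpha$. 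The Chung--Smirnov LIL for $G_n^{-1}$ yields $\sup_{\alpha \in \Delta}|G_n^{-1}(\alpha) - \alpha| = O(\sqrt{\log\log n / n})$ a.s., so the remainder, scaled by $\sqrt{n}$, contributes only $O(\log\log n/\sqrt{n}) = o(b_n)$.

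Multiplying through by $h_Y(\alpha)$, which by \textbf{(H)} is bounded between two positive constants on $\Delta$, the Bahadur--Kiefer quantity becomes
$$h_Y(\alpha)\,\sqrt{n}\bigl(F_n^{-1}(\alpha) - F_Y^{-1}(\alpha)\bigr) + \sqrt{n}\bigl(G_n(\alpha) - \alpha\bigr) = \sqrt{n}\bigl(G_n^{-1}(\alpha) - \alpha\bigr) + \sqrt{n}\bigl(G_n(\alpha) - \alpha\bigr) + o(b_n),$$
uniformly in $\alpha \in \Delta$. It remains to invoke Kiefer's classical representation for the uniform case (\cite{Kie67,ShoWel86}):
$$\limsup_{n\to\infty}\frac{1}{b_n} \sup_{\alpha \in [0,1]} \bigl| \sqrt{n}(G_n^{-1}(\alpha) - \alpha) + \sqrt{n}(G_n(\alpha) - \alpha) \bigr| < \infty \quad a.s.,$$
then divide by $\inf_\Delta h_Y > 0$ to conclude.

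The only delicate ingredient is Kiefer's step itself, which rests on a fine modulus-of-continuity analysis of the uniform empirical process and is the main obstacle in any self-contained derivation; here it is quoted as a black box. Everything else (the PIT, the Taylor expansion, the LIL) is routine and quantitatively controlled by the two-sided bound on $f_Y$ and the bound on $|f_Y'|$ provided by \textbf{(H)}. Tellingly, these easy reductions all break down in the $d \geq 2$ setting pursued later in the paper -- there is no probability integral transform turning a $d$-dimensional sample into a uniform one, nor a partial-sum representation of the order statistics -- which is precisely why the multivariate analogue will have to be proved directly from the empirical process indexed by half-spaces.
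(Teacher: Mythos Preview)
Your argument is correct and is precisely the classical route taken in the references the paper cites for this proposition (\cite{CsoRev81}, \cite{ShoWel86}): reduce to the uniform case by the probability integral transform, control the nonlinearity of $F_Y^{-1}$ by a second-order Taylor bound using the hypotheses on $f_Y$ and $f_Y'$, and then invoke Kiefer's uniform Bahadur--Kiefer theorem as a black box. One small cosmetic point: the uniformity of $U_i=F_Y(Y_i)$ does not follow from \textbf{(H)} alone (which only concerns $F_Y$ on $S_0$) but from the paper's standing assumption, stated just before \textbf{(H)}, that $Y$ admits a density $f_Y>0$ on all of $F_Y^{-1}((0,1))$; with that in force your identities $F_n^{-1}=F_Y^{-1}\circ G_n^{-1}$ and $F_n\circ F_Y^{-1}=G_n$ are valid.

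The paper's own proof is simply a pointer to these references together with the remark that the result is the $d=1$ case of \thm \ref{BKR}. It is worth noting that the proof of \thm \ref{BKR} does \emph{not} proceed via your reduction: it works directly with the empirical process indexed by half-spaces, localizes $Y_n(O,u,\alpha)$ to a window of width $O(\sqrt{\log\log n/n})$ via the LIL of \thm \ref{LLNU}, and then controls the increments of $\Lambda_n$ over thin bands using Talagrand's inequality and a moment bound (\prop \ref{lemmeAlphaN}). Your closing paragraph already identifies exactly why this alternative is needed in higher dimension: no quantile transform and no partial-sum representation survive for $d\geq 2$, so the PIT shortcut is unavailable and the empirical-process route is the one that generalizes. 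In short, your proof and the paper's cited proof coincide; the paper's \emph{new} proof (via \thm \ref{BKR}) is a genuinely different argument whose payoff is dimension-independence.
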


\begin{proof}
See \cite{CsoRev81}, \cite{DhvMas90}, \cite{Ein}, \cite{ShoWel86}. This also follows from \thm 3.4 where \textbf{(H)} is
weakened into \textbf{(A3)}.
\end{proof}

This yields an approximation of $\sqrt{n}\left(
F_{n}^{-1}-F_{Y}^{-1}\right) h_{Y}$ at this sub-optimal order $b_{n}$ by the
KMT Brownian Bridges $B_{n}^{\prime }$ built jointly with the empirical
process at sup-norm distance $(\log n)/\sqrt{n}$. This further means that the
same process $B_{n}^{\prime }$ is simultaneously close to the empirical and
quantile processes, which could help deriving joint limit laws in
statistical applications. 

We make use of the second strategy to extend the above results to $\mathbb{R}%
^{d}$. Thus the key result is a Bahadur-Kiefer type approximation of the
quantile surfaces by the empirical process, and surprisingly $b_{n}$ turns
out to be dimension free. The ensuing Gaussian approximation rates are
distribution free, but depends on the dimension through the strong
approximation of \cite{BerMas06}.\smallskip 

\subsection{Directional quantiles}

\noindent In \defi \ref{def22} below the directional quantile points are built
from projections $\left\{ \left\langle X_{n},u\right\rangle :u\in \mathbb{S}%
_{d-1}\right\} $ and are related to each other through a common anchoring
point $O\in \mathbb{R}^{d}$. The resulting quantile points no more depend on $O$
if, and only if, $d=1$. In this case the left and right directions are
associated to the unit vectors $u=-1$ and $u=+1$ and, for $\alpha \in \left[
1/2,1\right] $, the left and right directed $\alpha $-th quantile points
are, respectively, $Q(-1,\alpha)=F_{-Y}^{-1}(\alpha )$ and $Q_{\alpha
}(+1)=F_{Y}^{-1}(\alpha ).
$
We call $Q_{\alpha }=\left\{ Q(-1,\alpha),Q(+1,\alpha)\right\} $ the $%
\alpha $-th quantile set.\smallskip 

\noindent The usual univariate quantiles use only the right direction $+1$
and $\alpha \in \left[ 0,1\right] $. They can be deduced from $Q_{\alpha }$
as follows. Since $Q(-1,\alpha)$ is the right limit of $F_{Y}^{-1}$ at $%
1-\alpha $ it holds $Q(-1,\alpha)\geq F_{Y}^{-1}(1-\alpha )$ with
equality if and only if $F_{Y}$ is strictly increasing just after $%
F_{Y}^{-1}(1-\alpha )$. Let $Q^{-}(-1,1-\alpha )$ denote the left
continuous version of the increasing function $\alpha \rightarrow
Q(-1,1-\alpha)$ on $\left[ 0,1/2\right] $. In particular, $%
Q^{-}(-1,1/2)=\inf \left\{ y:F_{Y}(y)\geq 1/2\right\} $ and $%
Q^{-}(-1,1)=\inf \left\{ y:F_{Y}(y)>0\right\} $. Also write $%
Q^{+}(+1,1/2)=\sup \left\{ y:F_{Y}(y)\leq 1/2\right\} $ the right limit of 
$Q(+1,\alpha)$ at $\alpha =1/2$. Then we have%
\begin{equation*}
F_{Y}^{-1}(\alpha )=\1_{\alpha <1/2}Q^{-}(-1,1-\alpha )+\1_{\alpha
>1/2}Q(+1,\alpha),\quad \alpha \in (0,1)\backslash \left\{ 1/2\right\} 
\end{equation*}%
and $Q_{1/2}=[Q^{-}(-1,1/2),Q^{+}(+1,1/2)]$\ is the median interval of $Y$. Let $Q_{n}(-1,\alpha)$ be the empirical $\alpha $%
-th quantile of $-Y_{1},...,-Y_{n}$ and $Q_{n}(+1,\alpha)=F_{n}^{-1}(\alpha
)$. Write $h(-1,\alpha)=h_{-Y}(\alpha)$ and $h(+1,\alpha)=h_{Y}(\alpha)$.

 In the univariate case all subjective viewpoints are the same
since $O$ plays no role and \thm \ref{uclt} reduces exactly to the following.\smallskip 

\begin{corollary}
\textit{Assume that} \textbf{(H)} holds. The sequence
of real random processes $\sqrt{n}\left( Q_{n}(u,\alpha)-Q(u,\alpha)\right) $\textit{\ indexed by }$(u,\alpha )\in \left\{-1,1\right\}  \times \Delta $\textit{\ weakly converges to a centered Gaussian process }$%
G_{P}$\textit{\ indexed by }$(u,\alpha )\in \left\{-1,1\right\}  \times \Delta$ \textit{\ having covariance given by }%
\begin{equation}
cov(G_{P}(u_{1},\alpha _{1}),G_{P}(u_{2},\alpha _{2}))=\frac{\alpha
_{1}\wedge \alpha _{2}-\alpha _{1}\alpha _{2}}{h(u_{1},\alpha
_{1})h(u_{2},\alpha _{2})}.  \label{covunivar2}
\end{equation}
\end{corollary}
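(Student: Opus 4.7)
The plan is to reduce the joint convergence to that of a single empirical process by applying the Bahadur--Kiefer representation of Proposition \ref{prop2.4} separately in each of the two directions $u = \pm 1$. Since $f_{-Y}(y) = f_Y(-y)$, assumption \textbf{(H)} for $Y$ transfers to $-Y$, so Proposition \ref{prop2.4} is available in both directions.

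Writing $\alpha_n(y) = \sqrt{n}(F_n(y) - F_Y(y))$ for the empirical process of $Y_1,\ldots,Y_n$, Proposition \ref{prop2.4} gives, uniformly in $\alpha \in \Delta$,
\begin{equation*}
\sqrt{n}\bigl(Q_n(+1,\alpha) - Q(+1,\alpha)\bigr) = -\frac{\alpha_n(F_Y^{-1}(\alpha))}{h(+1,\alpha)} + o(1) \quad \text{a.s.}
\end{equation*}
For the opposite direction, the identity $F_{n,-Y}(y) = 1 - F_n(-y)$, which holds up to discrepancies of size $1/n$ at the finitely many jumps of $F_n$, combined with Proposition \ref{prop2.4} applied to the sample $-Y_1,\ldots,-Y_n$, yields, uniformly in $\alpha \in \Delta$,
\begin{equation*}
\sqrt{n}\bigl(Q_n(-1,\alpha) - Q(-1,\alpha)\bigr) = \frac{\alpha_n(F_Y^{-1}(1-\alpha))}{h(-1,\alpha)} + o(1) \quad \text{a.s.},
\end{equation*}
using $h(-1,\alpha) = h_{-Y}(\alpha) = h_Y(1-\alpha)$ (the $\sqrt{n}/n = n^{-1/2}$ discrepancies are absorbed into the $o(1)$ remainder).

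With both remainders uniformly $o(1)$ almost surely, the joint weak convergence of the two quantile processes reduces to the joint weak convergence of $\alpha_n \circ F_Y^{-1}$ evaluated on $\Delta \cup (1-\Delta) \subset (0,1)$, which is a single application of Donsker's theorem: $\alpha_n \circ F_Y^{-1}$ converges weakly to a standard Brownian bridge $B$. The limit is then the centered Gaussian process obtained by rescaling $-B(\alpha)$ (resp.\ $B(1-\alpha)$) by $1/h(+1,\alpha)$ (resp.\ $1/h(-1,\alpha)$), and the announced covariance structure follows from $\mathrm{cov}(B(s), B(t)) = s \wedge t - st$.

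The only obstacle is bookkeeping: tracking the reflection $\alpha \mapsto 1-\alpha$ and the accompanying sign change induced by the transformation $Y \mapsto -Y$, together with controlling the jump-point discrepancies in the passage from $F_{n,-Y}$ to $1 - F_n(-\cdot)$. Uniform tightness on $\Delta$ is inherited directly from that of the empirical process $\alpha_n$ composed with the continuous map $F_Y^{-1}$, which is ensured by \textbf{(H)}.
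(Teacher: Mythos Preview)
Your argument is correct. The route differs from the paper's in one respect worth noting. The paper's proof simply invokes either Theorem~3.2 with $d=1$ or, more elementarily, Proposition~\ref{prop2.2} directly: under \textbf{(H)} one has $Q(\pm 1,\alpha)$ expressible through $F_Y^{-1}$ at $\alpha$ and $1-\alpha$, so the joint process is just the single weighted quantile process $\sqrt{n}(F_n^{-1}-F_Y^{-1})h_Y$ read at two arguments, and Proposition~\ref{prop2.2} already gives its weak convergence to $B$. You instead linearize via the stronger Bahadur--Kiefer representation (Proposition~\ref{prop2.4}) and then appeal to Donsker for the empirical process $\alpha_n\circ F_Y^{-1}$. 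This is a small detour --- weak convergence does not require the almost-sure $b_n$ rate --- but it has the merit of mirroring exactly the strategy the paper uses in the genuinely multivariate proofs of Section~4, where no quantile transform is available and one must pass through the empirical process indexed by half-spaces.

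One minor caveat that applies equally to both arguments: the transfer of \textbf{(H)} to $-Y$ on $\Delta$ amounts to requiring the regularity of $f_Y$ near $F_Y^{-1}(1-\Delta)$, i.e.\ that $\Delta_0$ in \textbf{(H)} also covers $1-\Delta$. This is implicit in the statement and you are right not to belabor it.
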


\begin{proof}
Take $d=1$ in \thm 3.2. This is also a simple consequence of \prop
2 since by hypothesis $F_{Y}^{-1}$ is strictly increasing on $\Delta $ and
thus $Q(-1,\alpha)=F_{Y}^{-1}(1-\alpha )$. The limiting process is then
defined by $G_{P}(+1,\alpha )=B(\alpha )$ and $G_{P}(-1,\alpha )=B(1-\alpha )
$ so that (\ref{covunivar1}) yields (\ref{covunivar2}).
\end{proof}

Here is our flexible general definition of multivariate quantile surfaces.

\begin{definition}\label{defi}
(Generalized quantile sets). Let $O\in \mathbb{R}^{d}$, $u_{0}\in \mathbb{S}%
_{d-1}$, $0_d$ be the origin and $\varphi $\ be a $u_{0}$-symmetric continuous function from $%
\mathbb{R}^{d}$\ to $\mathbb{R}$\ satisfying 
\begin{align*}
\varphi ^{-1}((-\infty ,y_{1}])& =A_{y_{1}}\subset A_{y_{2}},\quad y_{1}\leq
y_{2}, \\
\lambda _{d}(\varphi ^{-1}(\left\{ y\right\} ))& =0,\quad y\in \mathbb{R}.
\end{align*}%
For any $u\in S_{d-1}$\ write $r_{u}$\ any rotation of $\mathbb{R}^{d}$\
having center $0_{d}$\ and angle $u_{0}\hookrightarrow u$\ and $t_{O}$\ the
translation directed by $O$. For $\alpha \in \left[ 1/2,1\right) $\ define%
\begin{eqnarray*}
Y(O,u,\alpha ) &=&\inf \left\{ y:\mathbb{P}(t_{O}\circ r_{u}(A_{y}))\geq
\alpha \right\}  \\
Q(O,\alpha ) &=&\left\{ O+Y_{\alpha }(O,u)u:u\in \mathbb{S}_{d-1}\right\} 
\end{eqnarray*}%
to be the $u$-directional ($\varphi ,u_{0}$)-shaped $\alpha $-th quantile
range and set seen from $O$.\smallskip 
\end{definition}
\noindent Hence each $\alpha $-th quantile point $O+Y_{\alpha }(O,u)u$
corresponds to a set having probability $\alpha $, symmetric with respect to
the line $(O,u)$.\ Put together this points form a surface $Q(O,\alpha )$ under
appropriate conditions. It is easily seen that \defi 2.1 reduces to \defi \ref{def22} in the
special case $\varphi (x)=\left\langle x,u_{0}\right\rangle $, $%
A_{y}=\varphi ^{-1}((-\infty ,y])=H(0_{d},u_{0},y)$. This orthogonal
projection case is our main focus.

\subsection{Multivariate quantile surfaces}
\noindent  Let $\mathcal{H}$ denote the family of all half-spaces and $\mathcal{H}_{\alpha}$ the sub-family of
half-spaces $H$ having probability $P(H)=\alpha>0$. Let
\begin{equation}
H(O,u,y)=\left\{  x\in\mathbb{R}^{d}:\left\langle x-O,u\right\rangle \leq
y\right\}  \in\mathcal{H}\label{Houy}%
\end{equation}
be the half-space standing at distance $y\in\mathbb{R}$ from $O$ in direction
$u\in\mathbb{S}_{d-1}$. Given $\alpha\in\left[ 1/2,1\right)  $ and $u\in\mathbb{S}_{d-1}$  let%
\[
Y(O,u,\alpha)=\inf\left\{  y:P(H(O,u,y))\geq\alpha\right\}
\]
be the $u$-directional $\alpha$-th quantile range from $O$ and%
\[
H(u,\alpha)=H(O,u,Y(O,u, \alpha))
\]
be the $u$-directional $\alpha$-th quantile half-space, that does not depend
on $O$. Conversely, for $y\in\mathbb{R}$, $P(H(O,u,y))$ is the $u$-directional
$p$-value at $y$. It is noteworthy that $P(H(O,u,y))=F_{\left\langle
X-O,u\right\rangle }(y)$ and thus
\begin{equation}
Y(O,u,\alpha)=F_{\left\langle X-O,u\right\rangle }^{-1}(\alpha
)=F_{\left\langle X,u\right\rangle }^{-1}(\alpha)-\left\langle
O,u\right\rangle \label{Yaou}%
\end{equation}
is the $\alpha$-th quantile of the real random variable $\left\langle
X-O,u\right\rangle $.\smallskip

\begin{definition}[Multivariate quantile set]\label{def22} \textit{For }$\alpha\in\left[1/2,1\right)
$\textit{, }$O\in\mathbb{R}^{d}$\textit{\ and }$u\in\mathbb{S}_{d-1}%
$\textit{\ define the }$u$\textit{-directional }$\alpha$\textit{-th quantile
point seen from }$O$\textit{\ to be}%
\begin{equation}
Q(O,u,\alpha)=O+Y(O,u,\alpha)u\label{Qaou}%
\end{equation}
\textit{and the }$\alpha$\textit{-th quantile set seen from }$O$\textit{\ to
be the star-shaped collection of points}%
\begin{equation}\label{Qsurface}
Q(O,\alpha)=\left\{  Q(O,u,\alpha):u\in\mathbb{S}_{d-1}\right\}  .
\end{equation}

\end{definition}

\noindent Since 
\begin{equation}\label{deterministTransfer}
Q(O^{\prime},u,\alpha)=Q(O,u,\alpha)+O^{\prime}-O-\left\langle O^{\prime}-O,u\right\rangle u
\end{equation}  it is easy to get all quantile
sets $Q(O,\alpha)$ from any of them. However, from a statistical point of view, looking at several $Q(O,\alpha)$ simultaneously by moving O, is a good way to learn about $P$.

\begin{figure}[htbp]
  \begin{center}
    \begin{minipage}{0.45\textwidth}
       \includegraphics[scale = 0.6]{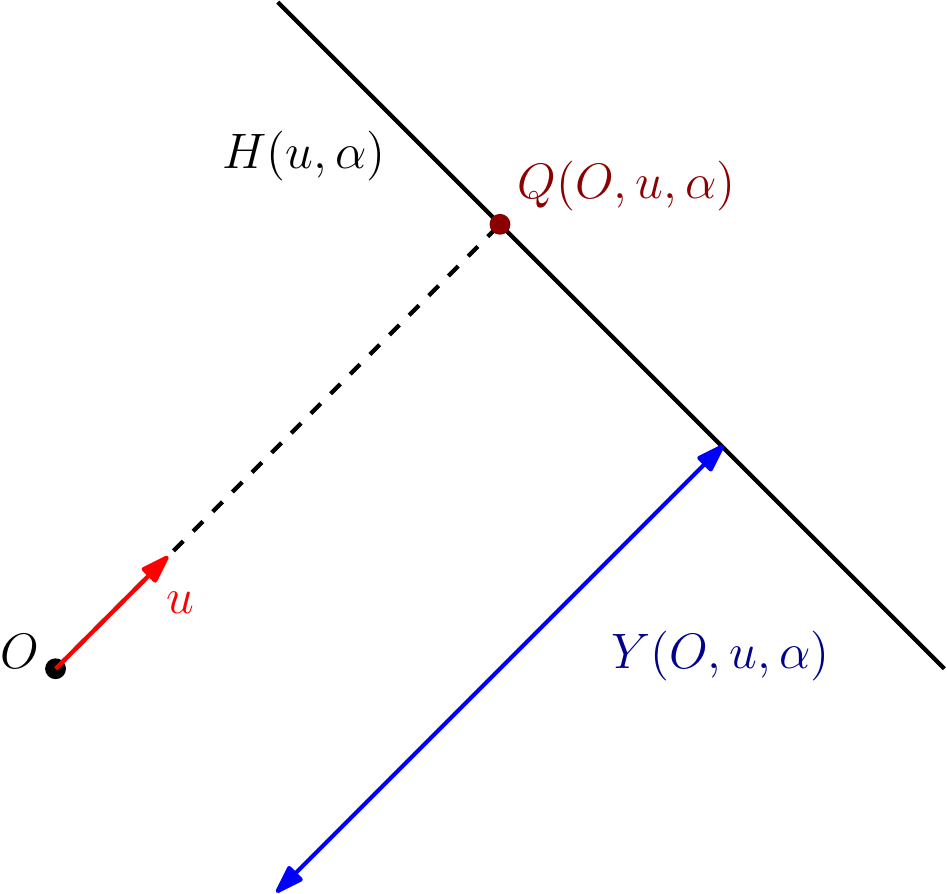} 
    \end{minipage}
    \begin{minipage}{0.45\textwidth}
       \includegraphics[scale = 0.6]{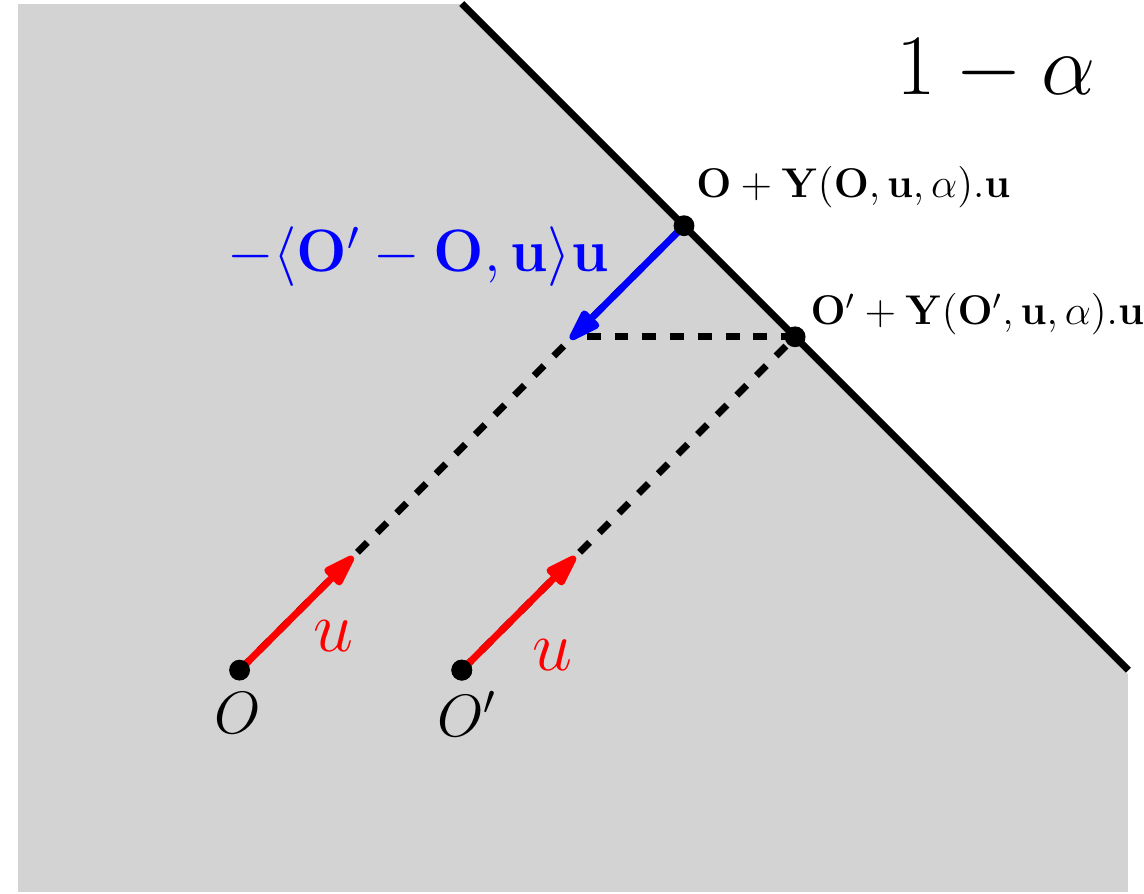} 
     \end{minipage} \\

  \end{center}
\end{figure}
We restrict ourselves to laws $P$ for
which the $\alpha$-th quantile sets $Q(O,\alpha)$ are surfaces, but we do not
require that $P$ is absolutely continuous.
\begin{remark}
The boundary of the intersection $\mathcal{T}_{\alpha}$ of all $H(u,\alpha)$  is the so-called ``Tukey contour''. If $\mathcal{T}_{\alpha}$ is not empty then it is 
a compact convex set and $u\to Y(O,u,\alpha)$  is its support function. Hence it is continuous, subadditive and, in general, not differentiable. However, $\mathcal{T}_{\alpha}$ is likely to be empty if $P$ is multimodal and $\alpha$  is  small enough.
\end{remark}
A median surface simply corresponds to $\alpha=1/2$ and has no special feature except maybe at  central points where it is more self intersecting than for $\alpha>1/2$ or at outliers.

~\\
\noindent\textsc{Basic assumptions.} Let $0_d$ be the origin of $\R^d$ and $\Delta = \left[\alpha^-, \alpha^+\right]\subset[1/2,1)$. Assume that the hyperplanes
\[
\partial H(u,\alpha)=\left\{  x\in\mathbb{R}^{d}:\left\langle x,u\right\rangle
=Y(0_{d},u,\alpha)\right\}
\]
satisfy%
\[
\condpm{-}\quad P(\partial H(u,\alpha))=0\text{,}\quad u\in\mathbb{S}%
_{d-1},\quad\alpha\in \Delta .
\]
Under $\condpm{-}$, for $\alpha\in\Delta  $ we have
$P(H(u,\alpha))=\alpha$ and $\mathcal{H}_{\alpha}=\left\{  H(u,\alpha
):u\in\mathbb{S}_{d-1}\right\}  $. This excludes laws $P$ partly supported by
one or more hyperplanes, for instance laws $P$ with discrete component. Assume
moreover that hyperbands%
\begin{equation}\label{hyperband}
H(O,u,y,z)=H(O,u,z)\setminus H(O,u,y) \quad y<z
\end{equation}
satisfy for all  $u \in \Sd$
\[
\condpm{+}\quad P(H(O,u,y,z))>0\text{,}\quad Y(O,u,\alpha^-)\leq y<z\leq Y(O,u,\alpha^+).
\]

\begin{remark}
Let $A\vartriangle B=(A\backslash B)\cup(B\backslash A)$.  Under
$\condpm{-}$ and $\condpm{+}$ it holds
\[
\lim_{\beta\rightarrow\alpha}\ P(H(u,\alpha)\vartriangle H(u,\beta
))=0\text{,}\quad u\in\mathbb{S}_{d-1},\quad\alpha\in\left[  1/2,1\right)  .
\]

\end{remark}

\noindent These two assumptions are sufficient to make the natural non-parametric estimator of
$Q(O,u,\alpha)$ consistent uniformly in $\left(  O,u,\alpha\right)$. Define the set of admissible distances by
\begin{equation}\label{distset}
\mathcal{Y}_{\Delta}(O,u)=\left\{  y:P(H(O,u,y))\in\Delta\right\}
=F_{\left\langle X-O,u\right\rangle }^{-1}(\Delta).
\end{equation}
Since the probability measure $P$ is tight, there exists $r^+>0$ such that $P(B(O,r^+))>\alpha^+$ and thus $H(u,\alpha)\cap B(O,r^+) \neq \emptyset$ for $\alpha \in \Delta$, hence 
\begin{equation}\label{distset2}
\sup_{u\in\mathbb{S}_{d-1}}\sup_{\alpha\in\Delta}\abs{Y(O,u,\alpha)}=\sup_{u\in\mathbb{S}_{d-1}}\sup_{y\in\mathcal{Y}_{\Delta}(O,u)}\left\vert
y\right\vert <+\infty.
\end{equation}

\begin{theorem}\label{surfaceregularity}
Under \condpm{-}, assumption \condpm{+} is equivalent to the fact that $(u,\alpha) \mapsto Q(O,u,\alpha)$ is continuous on $\Sd\times \Delta$ for any $O\in \R^d$.
\end{theorem}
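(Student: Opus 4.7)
The plan is to reduce the continuity of $Q(O, u, \alpha) = O + Y(O, u, \alpha) u$ to that of the scalar range $(u, \alpha) \mapsto Y(O, u, \alpha)$, which is bounded by (\ref{distset2}), and then work with the one-dimensional CDFs $F_u(y) := P(H(O, u, y))$ whose $\alpha$-quantiles are $Y(O, u, \alpha)$ by (\ref{Yaou}). Under $\condpm{-}$, the hyperplane $\partial H(O, u, Y(O, u, \alpha))$ coincides with $\partial H(u, \alpha)$ and so has zero $P$-mass, hence $F_u(Y(O, u, \alpha)) = \alpha$ for every $(u, \alpha) \in \Sd \times \Delta$.

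For the forward implication, assume $\condpm{+}$. The main technical input is the joint continuity
\begin{equation*}
(u_n, y_n) \to (u_0, y_0),\ P(\partial H(O, u_0, y_0)) = 0 \ \Longrightarrow\ F_{u_n}(y_n) \to F_{u_0}(y_0),
\end{equation*}
obtained from the pointwise convergence $\1_{H(O, u_n, y_n)}(x) \to \1_{H(O, u_0, y_0)}(x)$ at every $x \notin \partial H(O, u_0, y_0)$, combined with dominated convergence. Under $\condpm{-}$ and $\condpm{+}$, for each fixed $u_0$ the map $\alpha \mapsto Y(O, u_0, \alpha)$ is a continuous strictly increasing bijection from $\Delta$ onto the compact interval $[Y(O, u_0, \alpha^-), Y(O, u_0, \alpha^+)]$, so every $y$ in this interval satisfies $P(\partial H(O, u_0, y)) = 0$. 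Given a sequence $(u_n, \alpha_n) \to (u_0, \alpha_0)$ in $\Sd \times \Delta$, set $y_n := Y(O, u_n, \alpha_n)$ and extract any subsequential limit $y^*$. Applying Fatou's lemma in both directions to the indicator limits, and using $F_{u_n}(y_n) = \alpha_n \to \alpha_0$, yields the two-sided bound
\begin{equation*}
F_{u_0}(y^{*-}) \leq \alpha_0 \leq F_{u_0}(y^*).
\end{equation*}
The strict monotonicity and continuity of $F_{u_0}$ on $[Y(O, u_0, \alpha^-), Y(O, u_0, \alpha^+)]$ then force $y^* = Y(O, u_0, \alpha_0)$ as the unique preimage of $\alpha_0$; since every subsequential limit of $(y_n)$ agrees, the whole sequence converges.

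For the converse, suppose $\condpm{+}$ fails: there exist $u_0 \in \Sd$ and $Y(O, u_0, \alpha^-) \leq y < z \leq Y(O, u_0, \alpha^+)$ with $P(H(O, u_0, y, z)) = F_{u_0}(z) - F_{u_0}(y) = 0$. Then $F_{u_0}$ is constant on $[y, z]$, equal to some $\alpha^* \in [\alpha^-, \alpha^+]$; the continuity of $F_{u_0}$ at $Y(O, u_0, \alpha^+)$ guaranteed by $\condpm{-}$, together with $y < z \leq Y(O, u_0, \alpha^+)$, forces $\alpha^* < \alpha^+$. Then for every $\beta \in (\alpha^*, \alpha^+] \cap \Delta$ the definition of the quantile as an infimum gives $Y(O, u_0, \beta) \geq z > y \geq Y(O, u_0, \alpha^*)$; letting $\beta \downarrow \alpha^*$ produces a jump of $\alpha \mapsto Q(O, u_0, \alpha)$ at $\alpha^*$, contradicting the assumed continuity.

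The main obstacle lies in the forward direction: upgrading the pointwise continuity of the half-space probabilities $F_u(y)$ into a joint convergence of their inverses in $(u, \alpha)$ requires that any subsequential limit of $y_n$ remain in the strict-monotonicity range so that $\condpm{+}$ can pin it down, and the two-sided Fatou inequality displayed above is precisely the device that achieves this. The remaining care, essentially cosmetic, concerns the endpoint values $\alpha_0 \in \{\alpha^-, \alpha^+\}$, where one combines the constraint $\alpha_n \in \Delta$ with the continuity of $F_{u_0}$ at $Y(O, u_0, \alpha^\pm)$ to rule out limits outside $[Y(O, u_0, \alpha^-), Y(O, u_0, \alpha^+)]$.
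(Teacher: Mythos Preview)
Your argument is correct and takes a genuinely different route from the paper's.

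For the forward implication, the paper proceeds by contradiction and invokes a dedicated geometric lemma (Lemma~\ref{lem-tech1}, proved in the appendix by rotating half-spaces around the axis $(O,u)$ and elementary trigonometry): given $u_{n_k}\to u$ with $Y(O,u_{n_k},\alpha_{n_k})\to y_\infty\neq y=Y(O,u,\alpha)$, the lemma produces a decreasing family $H_k\subset H(O,u,y')$ with $\bigcap_k H_k=\emptyset$ and $H(O,u,y')\setminus H(u_{n_k},\alpha_{n_k})\subset H_k$; monotone convergence then forces $P(H(O,u,y,y'))=0$, contradicting $\condpm{+}$. You bypass this lemma entirely: the two-sided Fatou bound $F_{u_0}(y^{*-})\le\alpha_0\le F_{u_0}(y^*)$ on any subsequential limit $y^*$, combined with the strict monotonicity and continuity of $F_{u_0}$ on $[Y(O,u_0,\alpha^-),Y(O,u_0,\alpha^+)]$, pins down $y^*$ directly. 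This is shorter and purely analytic; the paper's route makes the half-space geometry explicit but is not needed for this statement. For the converse, both arguments are the same contrapositive: a zero-mass band creates a flat stretch of $F_{u_0}$ and hence a right-discontinuity of $\alpha\mapsto Y(O,u_0,\alpha)$.

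One remark on the endpoint issue you flag at $\alpha_0\in\{\alpha^-,\alpha^+\}$: it is genuine, and the paper's proof has the same gap --- when $\alpha=\alpha^+$ and $y_\infty>y$, the zero-mass band $H(O,u,y,y')$ obtained there lies outside the range $[Y(O,u,\alpha^-),Y(O,u,\alpha^+)]$ where $\condpm{+}$ gives a contradiction. Neither treatment fully closes this; it is harmless once the hypotheses are assumed on an open interval containing $\Delta$, which is the intended usage downstream.
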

By \thm \ref{surfaceregularity}, the set $Q(O,\alpha)$ from (\ref{Qsurface}) is the image of the compact set $\Sd$ through a continuous application, it is a surface we call the $\alpha$-th quantile surface seen from $O$.

\begin{corollary}
Under \condpm{-} and \condpm{+}, the set $Q(O,\alpha)$ is a closed surface, for all $O \in \R^d$ and $\alpha \in \Delta$.
\end{corollary}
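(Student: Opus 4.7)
The plan is to deduce the corollary essentially as an immediate consequence of Theorem \ref{surfaceregularity}, which was stated under exactly the same hypotheses $\condpm{-}$ and $\condpm{+}$. First I would fix $O\in\R^d$ and $\alpha\in\Delta$ and restrict the jointly continuous map $(u,\beta)\mapsto Q(O,u,\beta)$ from Theorem \ref{surfaceregularity} to the slice $\Sd\times\{\alpha\}$, obtaining the continuity on $\Sd$ of the single-variable map $u\mapsto Q(O,u,\alpha)=O+Y(O,u,\alpha)u$. This is the only analytic input needed.

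Next I would invoke the classical topological fact that the continuous image of a compact set is compact: since $\Sd$ is compact in $\R^d$, the image $Q(O,\alpha)=\{Q(O,u,\alpha):u\in\Sd\}$ is a compact subset of $\R^d$. In particular it is closed (and bounded, as was already observed in \eqref{distset2}). This gives the "closed" part of the statement; the fact that $Q(O,\alpha)$ is a surface was already argued in the sentence preceding the corollary, as the continuous image of the compact $(d-1)$-manifold $\Sd$ under the star-shaped parametrization $u\mapsto O+Y(O,u,\alpha)u$.

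There is essentially no obstacle here: once Theorem \ref{surfaceregularity} is available, the proof of the corollary is just the observation "continuous image of a compact set is compact, hence closed in $\R^d$". The real work in the corollary is hidden in Theorem \ref{surfaceregularity}, whose proof is where the equivalence between $\condpm{+}$ and the continuity of $(u,\alpha)\mapsto Y(O,u,\alpha)$ has to be established via control of the hyperband probabilities $P(H(O,u,y,z))$; the corollary itself only extracts the compactness consequence for a fixed $\alpha$.
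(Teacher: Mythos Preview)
Your proposal is correct and matches the paper's own justification exactly: the paper does not give a separate proof of this corollary but simply notes, in the sentence preceding it, that by Theorem~\ref{surfaceregularity} the set $Q(O,\alpha)$ is the image of the compact set $\Sd$ through a continuous map, hence a closed surface. Your argument is precisely this observation spelled out in more detail.
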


Let define the set of admissible bands of width $\eps > 0$ allowed by $\Delta$ to be
\begin{equation}~\label{Bands_set}
 \mathcal{B}_{\eps} = \left\{ H(O,u,y,y+\eps): \ \ O \in \R^d, u \in \Sd, y,y+\eps\in \mathcal{Y}_{\Delta}(O,u) \right\}.
\end{equation}
Note that $\mathcal{B}_{\eps}$ depends on $\Delta$ through $\mathcal{Y}_{\Delta}(O,u)$. It is useful to rewrite \condpm{-} and \condpm{+}, in terms of the function 
\begin{equation}~\label{psibande}
\Psi(\eps) = \dsp\inf_{B \in \mathcal{B}_{\eps}}P(B),  \quad \eps>0
\end{equation}

\begin{proposition}\label{psiequiv} Under \condpm{-} and \condpm{+} the following two conditions hold true
\begin{itemize}
\item[] \condpsi{-} $\lim_{\eps \to 0} \Psi(\eps) = 0$
\item[] \condpsi{+} $\Psi(\eps) > 0 $,  $0<\eps <\eps^+=\dsp\sup\{\eps>0, \cB_{\eps}\neq\emptyset\}.$
\end{itemize}
\end{proposition}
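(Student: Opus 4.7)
The proof splits into a constructive upper bound giving $\condpsi{-}$ and a compactness-based contradiction giving $\condpsi{+}$.

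For $\condpsi{-}$, I plan to exhibit an explicit band of width $\eps$ whose mass vanishes. Fix any $u_0\in\Sd$ and an interior $\alpha_0\in(\alpha^-,\alpha^+)$, and set $y_0 = Y(0_d,u_0,\alpha_0)$. By $\condpm{-}$ the one-dimensional distribution function $F(y) = P(H(0_d,u_0,y))$ has no atom at $y_0$, so $F(y_0+\eps)\in(\alpha_0,\alpha^+)\subset\Delta$ for every small $\eps>0$, placing $B_\eps = H(0_d,u_0,y_0,y_0+\eps)$ in $\cB_\eps$. Hence $\Psi(\eps)\leq P(B_\eps) = F(y_0+\eps)-\alpha_0 \to 0$.

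For $\condpsi{+}$, I argue by contradiction. Suppose $\Psi(\eps_0)=0$ for some $\eps_0\in(0,\eps^+)$ and pick bands $B_n = H(O_n,u_n,y_n,y_n+\eps_0)\in\cB_{\eps_0}$ with $P(B_n)\to 0$. Using $H(O,u,y,z) = H(0_d,u,y+\sca{O}{u},z+\sca{O}{u})$ I may absorb $\sca{O_n}{u_n}$ into $y_n$ and assume $O_n = 0_d$. Boundedness of $(y_n)$ via \eqref{distset2} together with compactness of $\Sd$ yield, along a subsequence, $u_n\to u^*$ and $y_n\to y^*$. The key analytic step is a Fatou inequality for the open limit band $B^*_o = \{x\in\R^d : y^*<\sca{x}{u^*}<y^*+\eps_0\}$: if $y^*<\sca{x}{u^*}<y^*+\eps_0$ strictly then eventually $y_n<\sca{x}{u_n}\leq y_n+\eps_0$, hence $\1_{B^*_o}\leq\liminf_n\1_{B_n}$ and $P(B^*_o)\leq\liminf_n P(B_n) = 0$. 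Upper and lower semicontinuity of the half-space CDF $F^*(y) = P(H(0_d,u^*,y))$ together with $F_n(y_n),F_n(y_n+\eps_0)\in\Delta$ and $F_n(y_n+\eps_0)-F_n(y_n) = P(B_n)\to 0$ identify a common subsequential limit $\alpha^*\in\Delta$ equal to $F^*(y^*)$; combined with $P(B^*_o)=0$ this forces $F^*$ to be constantly $\alpha^*$ on $[y^*,y^*+\eps_0)$.

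It remains to contradict $\condpm{+}$, which prohibits flat regions of $F^*$ inside the quantile range $[Y(0_d,u^*,\alpha^-),Y(0_d,u^*,\alpha^+)]$. If $\alpha^*<\alpha^+$, then for $\delta\in(0,\eps_0)$ small the value $F^*(y^*+\eps_0-\delta)=\alpha^*$ is $<\alpha^+$, so $y^*+\eps_0-\delta<Y(0_d,u^*,\alpha^+)$; combined with $y^*\geq Y(0_d,u^*,\alpha^-)$ this exhibits a non-degenerate flat subinterval of the quantile range, which $\condpm{+}$ forbids. The delicate case is $\alpha^*=\alpha^+$, where the flat region lies at the top of the admissible range and $\condpm{+}$ does not directly apply. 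Here I invoke the admissibility constraint $y_n+\eps_0\in\cY_\Delta(0_d,u_n)\subseteq[Y(0_d,u_n,\alpha^-),Y(0_d,u_n,\alpha^+)]$ together with the continuity of $u\mapsto Y(0_d,u,\alpha^+)$ supplied by \thm \ref{surfaceregularity}; passing to the limit gives $y^*+\eps_0\leq Y(0_d,u^*,\alpha^+)$, while $F^*(y^*)=\alpha^+$ and strict monotonicity force $y^*\geq Y(0_d,u^*,\alpha^+)$, yielding the impossible $\eps_0\leq 0$. Closing precisely this boundary case by importing the quantile continuity established in \thm \ref{surfaceregularity} is the main obstacle of the argument.
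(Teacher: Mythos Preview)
Your proof is correct and follows the same overall strategy as the paper: for $\condpsi{-}$ you and the paper both bound $\Psi(\eps)$ above by the mass of a single shrinking band at a fixed quantile hyperplane; for $\condpsi{+}$ both proofs extract by compactness a limit band $B_0=H(0_d,u^*,y^*,y^*+\eps_0)$ from a sequence $B_n$ with $P(B_n)\to 0$, invoke the continuity of $u\mapsto Y(0_d,u,\alpha)$ from \thm~\ref{surfaceregularity} to place the limit parameters back in the admissible range, and then contradict $\condpm{+}$.

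The execution of the key step differs. The paper argues pointwise that $\1_{B_0\setminus\partial B_0}\leq\liminf_k\1_{B_k}\leq\limsup_k\1_{B_k}\leq\1_{B_0}$, applies $\condpm{-}$ to get $P(\partial B_0)=0$, and concludes directly that $\lim_kP(B_k)=P(B_0)>0$, a contradiction. You instead pass to the one-dimensional CDF $F^*$, show it is flat on $[y^*,y^*+\eps_0)$, pin down the flat value as the subsequential limit $\alpha^*\in\Delta$, and then split into the cases $\alpha^*<\alpha^+$ and $\alpha^*=\alpha^+$. This case split is not needed: once \thm~\ref{surfaceregularity} gives $y^*,y^*+\eps_0\in[Y(0_d,u^*,\alpha^-),Y(0_d,u^*,\alpha^+)]$ (which you use anyway in the boundary case), $\condpm{+}$ applies uniformly and yields the contradiction in one stroke, as in the paper. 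Your identification $\alpha^*=F^*(y^*)$ via ``upper and lower semicontinuity'' is correct but compressed; it really combines Fatou bounds on both $F_n(y_n)$ and $F_n(y_n+\eps_0)$ with the flatness you already proved. The paper's sandwich on indicators is the cleaner route to the same conclusion.
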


\begin{proposition}\label{psicont}
The function $\Psi$ is right-continuous on $(0,\eps^+)$. Moreover, under \condpm{-} and \condpm{+} the function $\Psi$ is continuous on $[0,\eps^+)$.
\end{proposition}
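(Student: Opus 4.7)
My plan is to reduce both continuity statements to (a) the monotonicity of $\Psi$ and (b) appropriate one-sided semi-continuities, using the structure of $\mathcal{B}_\eps$ as a family of admissible bands. First, if $0\leq\eps_1\leq\eps_2$ and $B=H(O,u,y,y+\eps_2)\in\mathcal{B}_{\eps_2}$, the sub-band $B':=H(O,u,y,y+\eps_1)$ lies in $\mathcal{B}_{\eps_1}$ because $[y,y+\eps_1]\subset[y,y+\eps_2]\subset\mathcal{Y}_\Delta(O,u)$, and $P(B')\leq P(B)$; taking infima gives $\Psi(\eps_1)\leq\Psi(\eps_2)$. Consequently right-continuity at $\eps_0$ reduces to upper semi-continuity from the right, and left-continuity to lower semi-continuity from the left.

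For right-continuity at $\eps_0\in(0,\eps^+)$ I would fix $\delta>0$, pick a near-minimizer $B_0=H(O,u,y,y+\eps_0)\in\mathcal{B}_{\eps_0}$ with $P(B_0)<\Psi(\eps_0)+\delta$, and build $B_h\in\mathcal{B}_{\eps_0+h}$ satisfying $\limsup_{h\downarrow 0}P(B_h)\leq P(B_0)$. The canonical right-extension $B_h=H(O,u,y,y+\eps_0+h)$ is admissible as soon as $y+\eps_0<Y(O,u,\alpha^+)$, and then $P(B_h)-P(B_0)=F_{\langle X-O,u\rangle}(y+\eps_0+h)-F_{\langle X-O,u\rangle}(y+\eps_0)\to 0$ by the right-continuity of the one-dimensional CDF, which requires no hypothesis. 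In the pinned case $y+\eps_0=Y(O,u,\alpha^+)$ I would instead shift left, setting $B_h=H(O,u,y-h,y+\eps_0)$, available as long as $y>Y(O,u,\alpha^-)$. Finally, if both endpoints of $B_0$ are simultaneously pinned, forcing $L(O,u):=Y(O,u,\alpha^+)-Y(O,u,\alpha^-)=\eps_0$, I would exploit $\eps_0<\eps^+$ to relocate to an anchor $(O',u')$ with $L(O',u')>\eps_0$, which admits some $B_h\in\mathcal{B}_{\eps_0+h}$; the trivial upper bound $P(B_h)\leq\alpha^+-\alpha^-=P(B_0)$ in this degenerate situation closes the argument.

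Continuity at $0$ is immediate: bands of width $0$ are empty so $\Psi(0)=0$, and $\lim_{\eps\downarrow 0}\Psi(\eps)=0$ is precisely condition \condpsi{-}, granted by \prop \ref{psiequiv}. For left-continuity at $\eps_0\in(0,\eps^+)$ I would use compactness. By translation invariance of $P(H(O,u,y,y+\eps))=F_{\langle X,u\rangle}(\tilde y+\eps)-F_{\langle X,u\rangle}(\tilde y)$ with $\tilde y=y+\langle O,u\rangle$, only $u$ and $\tilde y$ matter; by (\ref{distset2}) the admissible $\tilde y$ lies in a bounded interval and $\Sd$ is compact. Under \condpm{-} no $F_{\langle X,u\rangle}$ has an atom on its quantile set of order in $\Delta$, so $(u,\tilde y)\mapsto F_{\langle X,u\rangle}(\tilde y)$ is jointly continuous there; combined with continuity of $u\mapsto Y(0,u,\alpha^\pm)$ from \thm \ref{surfaceregularity}, the map $(u,\tilde y,\eps)\mapsto F_{\langle X,u\rangle}(\tilde y+\eps)-F_{\langle X,u\rangle}(\tilde y)$ is continuous on the closed bounded set of admissible triples. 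For $\eps_n\uparrow\eps_0$, I would pick near-minimizers $(u_n,\tilde y_n)$, extract a convergent subsequence $(u_n,\tilde y_n)\to(u^*,\tilde y^*)$ whose limit remains admissible at $\eps_0$ by closedness, and pass to the limit to get $\liminf\Psi(\eps_n)\geq\Psi(\eps_0)$, which together with monotonicity yields left-continuity.

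The main obstacle is the right-continuity argument in the doubly-pinned case: without \condpm{-} and \condpm{+} we have no a priori control on atoms of the $F_{\langle X-O,u\rangle}$ at the endpoints of $\mathcal{Y}_\Delta(O,u)$, so neither right-extension nor left-shift at the same $(O,u)$ need produce $P(B_h)\to P(B_0)$, and the resolution must rely on the observation that double pinning saturates the trivial probability bound. For left-continuity, the decisive technical input is the joint continuity of $(u,y)\mapsto F_{\langle X,u\rangle}(y)$ on the admissible set, which is precisely where the atom-free hypothesis \condpm{-} is indispensable.
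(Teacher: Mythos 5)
Your overall strategy coincides with the paper's for both parts: right-continuity by right-extending a near-minimizing band $B_0\in\cB_{\eps_0}$ to a band in $\cB_{\eps_0+h}$ at the same anchor and using monotone convergence of probabilities; left-continuity under $\condpm{-}$ and $\condpm{+}$ by extracting a convergent subsequence of near-minimizing anchors via compactness and passing to the limit using the regularity of $P$ on half-spaces. For left-continuity the paper cites \thm\ \ref{surfaceregularity} to get $P(B_{\theta,k_n})\to P(B_{\theta,0})$ whereas you argue via joint continuity of $(u,\tilde y)\mapsto F_{\langle X,u\rangle}(\tilde y)$ on the compact admissible set; these are two equivalent formulations of the same regularity input and both close the argument correctly.

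Where you go beyond the paper is in explicitly confronting the ``pinned'' boundary cases in the right-continuity step -- a point the paper's proof silently assumes away when it writes $P(B_{\theta,k})\geq\Psi(\eps_k)$ without checking $B_{\theta,k}\in\cB_{\eps_k}$. However, your fix for the doubly-pinned case introduces a gap. You assert $P(B_0)=\alpha^+-\alpha^-$ when $y=Y(O,u,\alpha^-)$ and $y+\eps_0=Y(O,u,\alpha^+)$. But in general $P(B_0)=F_{\langle X-O,u\rangle}(y+\eps_0)-F_{\langle X-O,u\rangle}(y)=\alpha^+-F_{\langle X-O,u\rangle}(y)$, and $F_{\langle X-O,u\rangle}(y)=\alpha^-$ only when there is no atom of the projected law at the left endpoint, which is exactly the content of $\condpm{-}$ -- a hypothesis you do not have for the first statement of the proposition. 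With an atom at $y$, one can have $P(B_0)<\alpha^+-\alpha^-$, so the trivial bound $P(B_h)\leq\alpha^+-\alpha^-$ on a relocated band need not be $\leq P(B_0)+\delta$. The same atom problem affects your singly-pinned left-shift: $P(H(O,u,y-h,y+\eps_0))\to F(y+\eps_0)-F(y^-)=P(B_0)+P(\{\langle X-O,u\rangle=y\})$, which overshoots $P(B_0)$ whenever that atom is present. So your pinned-case arguments quietly rely on the atom-free regularity of $\condpm{-}$, making them usable only for the second (conditional) statement, not for the unconditional right-continuity. To be fair, the paper's published proof of right-continuity has the same unaddressed defect in the pinned case; you at least made the issue visible.
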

By \prop \ref{psicont} under \condpm{-} and \condpm{+} $\Psi$ is \textit{c\`adl\`ag} and strictly increasing with
\begin{equation}\label{psinv}
\Psi\circ\Psi^{-1}(\alpha)=\alpha \ \ \text{and} \ \  \Psi^{-1}\circ\Psi(\alpha)\geq \alpha,\ \ \alpha \in \Delta.
\end{equation}
\subsection{Empirical quantile surfaces}
 We intend to estimate $Q(O,\alpha)$ jointly in $\alpha\in
\Delta \subset\left[1/2,1\right)  $
and $O\in\mathbb{R}^{d}$ by applying the definition of quantile surfaces from section \textsc{2.3} to the empirical measure
$P_{n}=\frac{1}{n}\sum_{i\leq n}\delta_{X_{i}}$,
where $\delta_{x}$ is the Dirac mass at $x\in\mathbb{R}^{d}$. For $u\in\mathbb{S}_{d-1}$ let%
$$Y_{n}(O,u,\alpha)  =\inf\left\{  y:P_{n}(H(O,u,y))\geq\alpha\right\}.$$
Define the $u$-directional $\alpha$-th empirical quantile point seen from $O$
to be%
\[
Q_{n}(O,u,\alpha)=O+Y_{n}(O,u,\alpha)u
\]
and associate to this point the $\alpha$-th empirical quantile half-space%
\[
H_{n}(u,\alpha)=H(O,u,Y_{n}(O,u,\alpha)).
\]
Let the $\alpha$-th empirical quantile set seen from $O$ be%
\[
Q_{n}(O,\alpha)=\left\{  Q_{n}(O,u,\alpha):u\in\mathbb{S}_{d-1}\right\}  .
\]
The quantile half-spaces indexed by points $Q_{n}(O,u,\alpha)$ are collected into%
\[
\mathcal{H}_{n,\alpha}=\left\{  H_{n}(u,\alpha):u\in\mathbb{S}_{d-1}\right\}
\]
For $O,O'$ in $\R^d$ we have $Y_n(O',u,\alpha) = Y_n(O,u,\alpha) - \sca{O'-O}{u}$ and combining this with (\ref{deterministTransfer}) we can highlight the following important property of the directional quantiles process
\begin{equation}\label{IndepError}
Y_n(O',u,\alpha) - Y(O',u,\alpha) = Y_n(O,u,\alpha) - Y(O,u,\alpha)
\end{equation}
which means that $Y_n-Y$ is independent of $O$.
\begin{proposition}\label{surestim}
Under \condpm{-}, for all $n > d$, 
$$\mathcal{H}_{n,\alpha}\subset\left\{  H:H\text{ half-space, }P_{n}(H)\in\left[
\alpha,\alpha+\frac{d}{n}\right]  \right\} .$$
\end{proposition}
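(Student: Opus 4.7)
The plan is to prove the two inequalities separately. The lower bound $P_n(H_n(u,\alpha))\geq\alpha$ follows directly from the definition of $Y_n(O,u,\alpha)$ as an infimum: since $P_n$ has finite support, the map $y\mapsto P_n(H(O,u,y))$ is a right-continuous step function, so the infimum is attained.

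For the upper bound, I would decompose
\[
H_n(u,\alpha) = H_n^{\circ}(u,\alpha)\sqcup \partial H_n(u,\alpha),\qquad H_n^{\circ}(u,\alpha)=\{x:\langle x-O,u\rangle<Y_n(O,u,\alpha)\},
\]
into open half-space and bounding hyperplane. The strict part of the infimum---$P_n(H(O,u,y))<\alpha$ for every $y<Y_n(O,u,\alpha)$, stabilizing as $y\uparrow Y_n(O,u,\alpha)$ because $P_n$ has only finitely many atoms---yields $nP_n(H_n^{\circ}(u,\alpha))\leq\lceil n\alpha\rceil-1\leq n\alpha$. Adding the two pieces gives
\[
nP_n(H_n(u,\alpha)) \leq n\alpha + nP_n(\partial H_n(u,\alpha)),
\]
which reduces the whole claim to a general-position bound: at most $d$ sample points lie on the boundary hyperplane $\partial H_n(u,\alpha)$.

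I would then establish this general-position statement almost surely under $\condpm{-}$ by Fubini and a union bound. For each fixed $(d+1)$-subset of indices, conditioning on the first $d$ points turns the coplanarity probability into $\mathbb{E}[P(\mathrm{aff}(X_{i_1},\ldots,X_{i_d}))]$; since the affine hull sits in a random hyperplane, one needs to argue $P$ almost surely assigns it zero mass. Any hyperplane $H^{\ast}=\{\langle x,v\rangle=c\}$ with $P(H^{\ast})>0$ would produce a jump of $F_{\langle X,v\rangle}$ at $c$, making $\partial H(v,\alpha)=H^{\ast}$ for every $\alpha$ in the corresponding interval, which $\condpm{-}$ forbids as soon as that interval meets $\Delta$. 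The main obstacle is verifying that the normal direction and the level of the random affine hull almost surely fall into the bulk region where $\condpm{-}$ applies---for levels lying in the extreme tails of some directional c.d.f., $\condpm{-}$ alone does not exclude an atomic hyperplane, so this step requires a careful argument combining the tightness of $P$ with the range $[Y(0_d,u,\alpha^-),Y(0_d,u,\alpha^+)]$ swept out by $\Delta$. Once this is settled, a finite union bound over the $\binom{n}{d+1}$ subsets closes the argument, and the restriction $n>d$ simply ensures that the bound $\alpha+d/n<1$ remains informative for $\alpha$ close to $1$.
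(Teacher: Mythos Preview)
Your outline matches the paper's proof step by step: the lower bound from the infimum definition; the upper bound via the decomposition $H_n(u,\alpha)=\interi{H_n(u,\alpha)}\cup\partial H_n(u,\alpha)$ together with $P_n(\interi{H_n(u,\alpha)})\leq\alpha$; and the general-position bound $nP_n(\partial H)\leq d$ obtained by conditioning on $d$ of the $d+1$ points (Fubini) followed by a union bound over $(d+1)$-tuples of indices.

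Where you hesitate---whether $\condpm{-}$ truly forces $P(\partial H)=0$ for \emph{every} hyperplane $\partial H$, and not only for the quantile hyperplanes $\partial H(u,\alpha)$ with $\alpha\in\Delta$---the paper does not supply the careful argument you are looking for. It simply writes ``$\mathbb{P}(X_{d+1}\in\partial H)=0$ for all hyper-plan $\partial H$, by $\condpm{-}$'' and moves on; the same reading is asserted informally right after $\condpm{-}$ is introduced (``This excludes laws $P$ partly supported by one or more hyperplanes''). Your reservation is in fact legitimate: taken literally, $\condpm{-}$ only forbids atoms of $F_{\langle X,u\rangle}$ whose level falls in $\Delta$, and a hyperplane $\{ \langle x,v\rangle=c\}$ carrying mass $p>0$ with $P(\langle X,v\rangle<c)\geq\alpha^{+}$ violates $\condpm{-}$ neither in direction $v$ nor in direction $-v$. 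In that situation the event ``$d+1$ sample points lie on a common hyperplane'' has positive probability and the bound can fail. So your tightness/range patch cannot close the gap either; the clean resolution is to read $\condpm{-}$ the way the authors evidently intend it, namely as $P(\partial H)=0$ for every hyperplane (equivalently, every $F_{\langle X,u\rangle}$ is continuous), after which the Fubini step is immediate and your proof and the paper's coincide.

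One small correction: the restriction $n>d$ is not there to keep $\alpha+d/n<1$---for $n\leq d$ the upper bound is trivially $P_n(H)\leq 1\leq\alpha+d/n$---but simply so that speaking of $d+1$ sample points is meaningful in the general-position argument.
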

\subsection{Illustrations and comments}
We picture out several examples in dimension 2. On \textsc{Fig} \ref{label1a} we show shapes of quantile surfaces obtained for symmetric laws, here the symmetry point is $O=(0,0)$ and thus $Q(O,\alpha)$ is a circle. The function $\alpha \to Y(O,(1,0),\alpha)$ corresponds to the univariate quantile function of the radial law. Moving O at $O_2=(-3,0)$, $O_3=(-5,0)$, $O_4=(-7,0)$ gives examples of typical shapes when the observer is away from the central point. This typical shape has one inner and one outer loops intersecting at $O$, each corresponding to connex subsets of directions in $\Sd$. 

\textsc{Fig} \ref{label2a} shows that the previous typical shape is preserved even if $P$ has no
density but obeys \condpm{-} and \condpm{+}, here a spiral support with uniform law.
The empirical surface for $\alpha=0.7$ is shown to be less smooth with
$n=1000$ points than the almost true one with $n=10000$ points.

Next we consider a mixture of two Gaussian distributions $\mathcal{N}((-2,0),I)$ and $\mathcal{N}((2,0),3 I)$ 
with weights $1/4$ and $3/4$ respectively, where $I$ is the identity matrix. In
\textsc{Fig} \ref{label3a} the surfaces are contours since the observer is inside the central
area, here we take $\alpha = 0.6$, $0.7$, $0.8$ and $0.9$. In \textsc{Fig} \ref{label4a} $\alpha$ is fixed and
O is moving outside the data. Note that any of the surfaces can be
deduced from the other by (\ref{deterministTransfer}) so drawing several $O$ is very fast and facilitates
a visual human interpretation.

In \textsc{Fig} \ref{label5a} and \ref{label6a}, $P$ is a similar gaussian mixture but the two modes are
more separated compared to the standard deviation. The Tukey contours
are  sometimes empty, however the quantile surfaces always exist and are shown from an
observer standing between the two modes. In \textsc{Fig} \ref{label5a} increasing $\alpha$ results in
resorbing the left part of the initial contours to create an inside loop
at the right hand side, associated to the left oriented directions for
which the mass has to be catched behind the observer -- here $\alpha = 0.6$, $0.7$, $0.8$ and $0.9$.
In \textsc{Fig} \ref{label6a}, moving $O$ for a fixed $\alpha=0.7$ is a simple computation and drawing
all surfaces helps understanding where the modal areas are located -- for
alpha large enough the main modal area is easily revealed in between
the surfaces. In cases where the data cloud is so big that no study can
be performed visually such a data summary can be useful.

On \textsc{Fig} \ref{label12a} we show in red color, the median surface seen from $O=(0,0)$ which is almost a point since the spiral uniform law is "almost" symmetric. By zooming toward the median surface we can see on \textsc{Fig} \ref{label13a} that it is indeed a very oscillating surface around $O$ with a very
small volume. Obviously if $P$ is symmetric about M then the median
surface seen from $O=M$ is reduced to the point $O$ itself and the median
surface seen from another point is a sphere (a circle here) passing
through the symmetry point $M$. Such a central median point can then be
localized by intersecting median surfaces. If $P$ is not symmetric the
median surface has not necessarily a small volume somewhere. For instance at \textsc{Fig} \ref{label15a} the point where the median surface is almost of minimum volume for the second
gaussian mixture is at $O=(4.1,0)$. The associated median surface shows three loops -- one cutting mass from the right and two from the upper left or lower left respectively. Moreover the median surfaces seen from $O_1=(-15,6)$, $O_2=(-5,6)$, $O_3=(-5,-6)$ and $O_4=(15,6)$, intersects around the median surface of \textsc{Fig} \ref{label14a} but they are not circles.

It is noteworthy that under \condpm{-} and \condpm{+}, every median surface is a "double" surface, in the sense that every point of $Q(O,1/2)$ corresponds at the same time to $Q(O,u,1/2)$ and the point $Q(O,-u,1/2)$.

In \textsc{Fig} \ref{label8a} we show a case where at the special point $O=(0,0)$ 
even for $\alpha$ large more than two loops appear inside the quantile
surface. Here $P$ is a mixture of several laws having disjoint supports
separated by lines containing $O$. Moving slightly $O$ at \textsc{Fig} \ref{label9a} provides again the typical shapes and the transition merging the two inner loops into
one is smooth as $O$ moves. Then sending $O$ far away confirms the usual
shape seen from outer points, see \textsc{Fig} \ref{label10a}.

As a conclusion we promote the technique of moving alpha and $O$ to
analyze data from the mass localization viewpoint. Since all is under
the control of sharp limit theorems we can also think about using deterministic and random
projections on low dimensional spaces minimizing quantile surfaces, as for
linear data analysis. It is possible to build many kind of tests based
on quantile surfaces, and also depth vector fields summarizing for each
$O$ the distance and average direction to move in order to recover $\alpha$
mass. 
\begin{figure}[htbp]
  \begin{center}
      \begin{minipage}{0.45\textwidth}
   \includegraphics[width=1.05\textwidth]{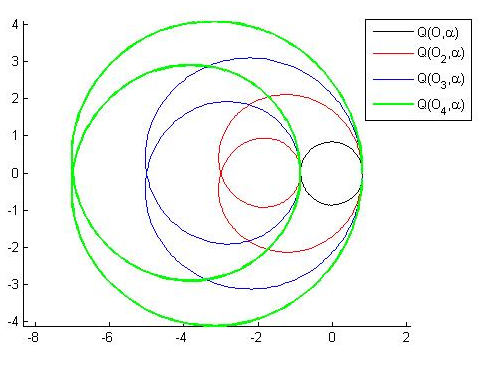}
       \caption{Central symmetric law, $\alpha=0.8$, moving $O$.}
             \label{label1a}
    \end{minipage}
    \begin{minipage}{0.45\textwidth}
            \includegraphics[width=\textwidth]{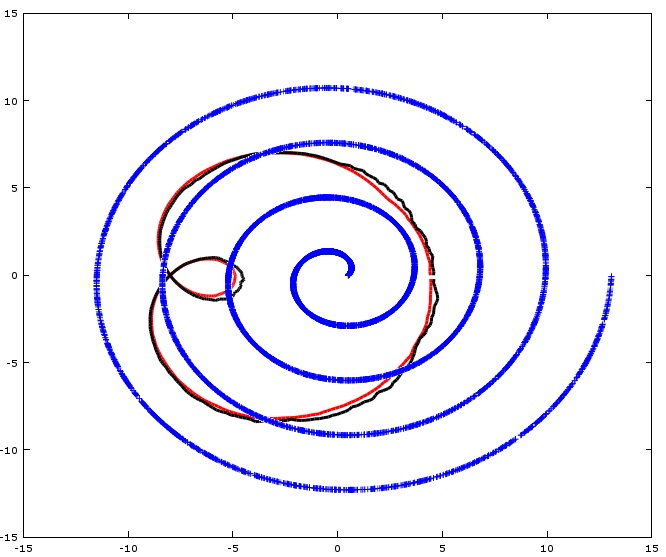}
       \caption{Law with Lebesgue zero support.}
             \label{label2a}
    \end{minipage} \\
    \begin{minipage}{0.45\textwidth}
       \includegraphics[width=\textwidth]{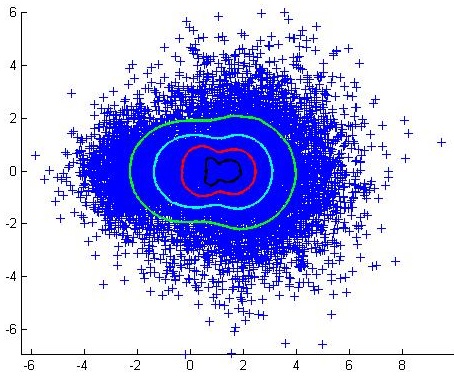}
       \caption{$O$ inward, moving $\alpha$.}
             \label{label3a}
    \end{minipage}
    \begin{minipage}{0.45\textwidth}
       \includegraphics[width=1.1\textwidth]{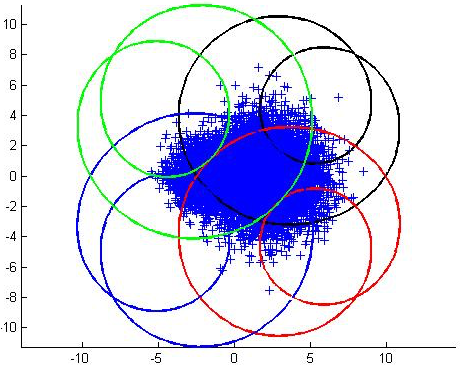}
    \caption{$\alpha$ fixed, $\alpha=0.7$, moving $O$ around.}
          \label{label4a}
    \end{minipage} \\
    \begin{minipage}{0.45\textwidth}
      \includegraphics[width=\textwidth]{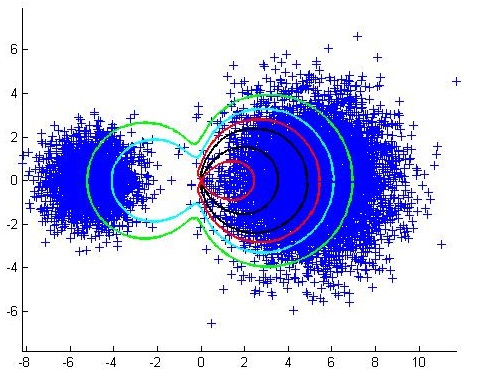}
       \caption{$O$ fixed between two modes, moving $\alpha$.}
             \label{label5a}
    \end{minipage}
    \begin{minipage}{0.45\textwidth}
      \includegraphics[width=\textwidth]{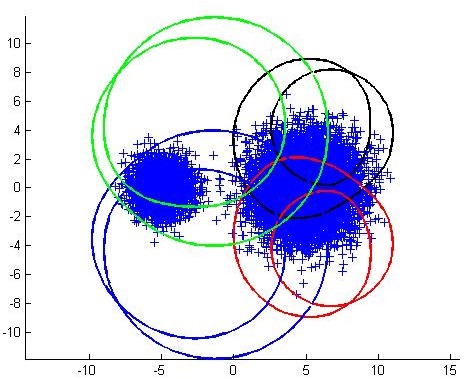}
            \caption{$\alpha$ fixed, $\alpha=0.7$, moving $O$ around.}
                  \label{label6a}
    \end{minipage} \\
    \caption{Examples of quantiles surfaces in dimension 2}
    \label{label7a}
  \end{center}
\end{figure}

\begin{figure}[htbp]
  \begin{center}
      \begin{minipage}{0.45\textwidth}
   \includegraphics[scale=0.5]{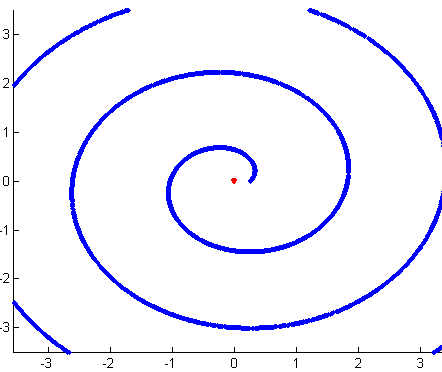}
       \caption{Median surface for a Lebsgue zero supported measure}
             \label{label12a}
    \end{minipage}
    \begin{minipage}{0.45\textwidth}
            \includegraphics[scale=0.5]{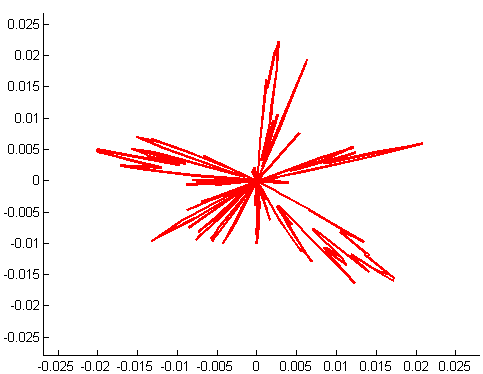}
       \caption{Zoom on the median of \textsc{Fig} \ref{label12a}}
             \label{label13a}
    \end{minipage} \\
    \begin{minipage}{0.45\textwidth}
       \includegraphics[scale=0.5]{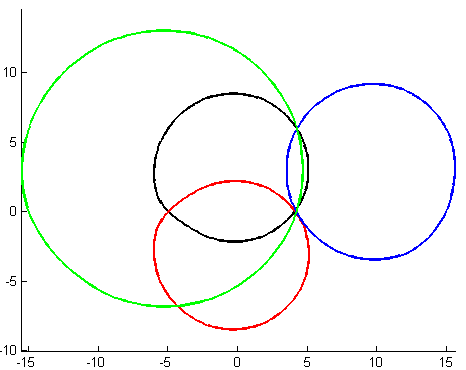}
       \caption{Median surfaces for an asymmetric law while moving $O$}
             \label{label14a}
    \end{minipage}
    \begin{minipage}{0.45\textwidth}
       \includegraphics[scale=0.5]{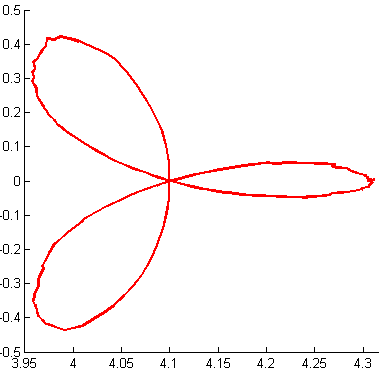}
    \caption{Median surface for an asymmetric law for $O=(4.1,0)$}
          \label{label15a}
    \end{minipage} \\
    \caption{Examples of median surfaces.}
    \label{label16a}
  \end{center}
\end{figure}
\begin{figure}[htbp]
  \begin{center}
    \begin{minipage}{\textwidth}
            \includegraphics[width=\textwidth]{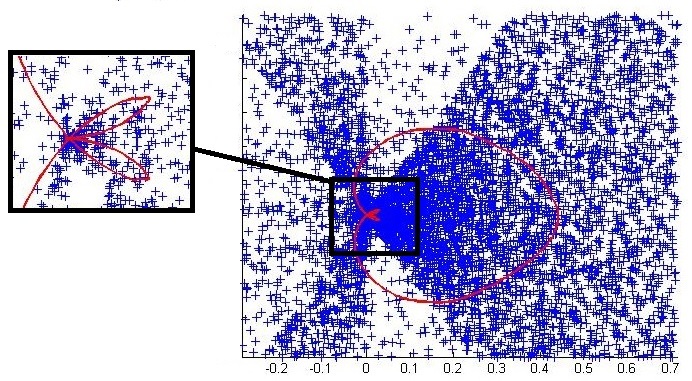}
       \caption{Two inner loops at $O=(0,0)$, $\alpha = 0,7$.}
             \label{label8a}
    \end{minipage} \\
    \begin{minipage}{0.45\textwidth}
       \includegraphics[width=0.9\textwidth]{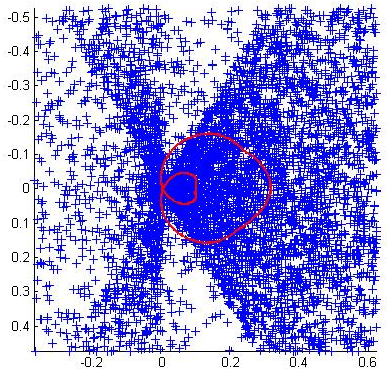}
       \caption{$O$ moved outward, $\alpha = 0,7$.}
             \label{label9a}
    \end{minipage}
    \begin{minipage}{0.45\textwidth}
       \includegraphics[width=1.05\textwidth]{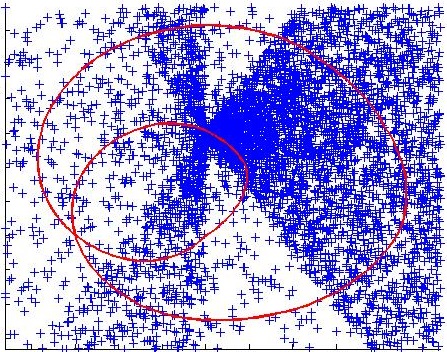}
    \caption{One inner loop at $O=(0,0)$, $\alpha = 0,8$.}
          \label{label10a}
    \end{minipage} \\
        \caption{An example of law with multi loops for some $O$.}
    \label{label11a}
  \end{center}
\end{figure}
\section{Results}
\subsection{Uniform Strong Consistency}The following result reduces exactly to \prop \ref{prop2.1}, when $d=1$.
\begin{theorem}[Uniform Strong Consistency]\label{LGNU}
Under the assumption $\condpm{-}$, $\condpm{+}$ is equivalent to
$$\lim_{n \to \infty} \left \Vert Y_n(O,u,\alpha)-Y(O,u,\alpha) \right\Vert_{\R^d\times S_{d-1}\times\Delta}=0  \quad a.s. $$
Hence, we have 
$$\dsp\lim_{n\rightarrow\infty}\sup_{O\in\mathbb{R}^{d}}\sup_{\alpha\in\Delta}\ d_{H}(Q_{n,\alpha}(O),Q_{\alpha}(O))    =0\quad a.s.$$
where $d_H$ denotes the Hausdorff distance.
\end{theorem}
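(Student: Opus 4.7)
The plan is to (i) reduce the $O$-supremum via the translation identity (\ref{IndepError}), (ii) invoke a uniform Glivenko-Cantelli bound over the VC class of half-spaces, and (iii) transfer empirical-CDF closeness into quantile closeness through the modulus $\Psi$.

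By (\ref{IndepError}), the error $Y_n(O,u,\alpha)-Y(O,u,\alpha)$ does not depend on $O$, hence
$$\|Y_n-Y\|_{\R^d\times\Sd\times\Delta}=\sup_{u\in\Sd,\,\alpha\in\Delta}\bigl|Y_n(0_d,u,\alpha)-Y(0_d,u,\alpha)\bigr|,$$
and I would fix $O=0_d$ throughout. Setting $F_u(y):=P(\langle X,u\rangle\le y)$ and $F_{n,u}$ for its empirical counterpart, one has $Y(0_d,u,\alpha)=F_u^{-1}(\alpha)$ and $Y_n(0_d,u,\alpha)=F_{n,u}^{-1}(\alpha)$. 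The class of half-spaces in $\R^d$ has Vapnik-Chervonenkis dimension $d+1$, so the VC Glivenko-Cantelli theorem gives
$$D_n:=\sup_{u\in\Sd,\,y\in\R}|F_{n,u}(y)-F_u(y)|\xrightarrow{\text{a.s.}}0.$$

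For the direction $\condpm{+}\Rightarrow$ uniform consistency, the modulus $\Psi$ from (\ref{psibande}) is the bridge. \props \ref{psiequiv} and \ref{psicont} give $\Psi$ continuous and strictly increasing with $\Psi(\eps)>0$ for $\eps\in(0,\eps^+)$, and for every admissible band $[y,y+\eps]\subset\cY_\Delta(0_d,u)$, $F_u(y+\eps)-F_u(y)\ge\Psi(\eps)$. Fix $\eps>0$. If $Y_n(0_d,u,\alpha)>Y(0_d,u,\alpha)+\eps$ with $Y+\eps$ admissible, then $F_{n,u}(Y+\eps)<\alpha=F_u(Y)\le F_u(Y+\eps)-\Psi(\eps)$, forcing $D_n\ge\Psi(\eps)$; a symmetric estimate treats the case $Y_n<Y-\eps$. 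Contrapositively, $D_n<\Psi(\eps)$ yields $|Y_n-Y|\le\eps$ uniformly in the admissible interior, and $D_n\to 0$ a.s.\ closes the argument there.

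The main obstacle is the boundary behaviour: for $\alpha$ near $\alpha^\pm$, the symmetric shifts $Y\pm\eps$ may exit $\cY_\Delta(0_d,u)$ and the $\Psi$-bound is no longer directly available. I would resolve this by replacing the symmetric shifts with in-range quantile shifts $y_+:=Y(0_d,u,\min(\alpha+\delta,\alpha^+))$ and $y_-:=Y(0_d,u,\max(\alpha-\delta,\alpha^-))$: by the uniform continuity of $(u,\alpha)\mapsto Y(0_d,u,\alpha)$ on the compact $\Sd\times\Delta$ supplied by \thm \ref{surfaceregularity}, $\delta=\delta(\eps)>0$ can be chosen so that $|y_\pm-Y|<\eps$ uniformly, and re-running the comparison with $y_\pm$ in place of $Y\pm\eps$ gives the uniform bound once $D_n$ is below a positive quantity depending on $\delta$. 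For the converse, if $\condpm{+}$ fails then some admissible band of positive width has zero $P$-measure, $F_u$ is constant on a nondegenerate subinterval of $\cY_\Delta$ hitting some $\alpha_0\in\Delta$, and the LIL-based oscillation argument of the converse of \prop \ref{prop2.1} produces $\limsup_n|Y_n(0_d,u,\alpha_0)-Y(0_d,u,\alpha_0)|>0$ almost surely, killing the uniform consistency. Finally, since $Q_n(O,u,\alpha)-Q(O,u,\alpha)=(Y_n-Y)u$, matching points of the two surfaces by the common $u$ gives $d_H(Q_n(O,\alpha),Q(O,\alpha))\le\sup_u|Y_n(O,u,\alpha)-Y(O,u,\alpha)|$, so the Hausdorff statement follows from the quantile one.
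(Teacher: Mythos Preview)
Your proof is correct and follows essentially the same route as the paper: both reduce to the Glivenko--Cantelli theorem for the VC class $\mathcal{H}$ of half-spaces and transfer closeness of $P_n$ to $P$ into closeness of $Y_n$ to $Y$ via the modulus $\Psi$, the paper arguing by contradiction with a subsequence extraction while you argue directly that $D_n<\Psi(\eps)$ forces $|Y_n-Y|\le\eps$. You are in fact more explicit than the paper about the boundary obstacle at $\alpha=\alpha^\pm$; your quantile-shift fix via the uniform continuity of $(u,\alpha)\mapsto Y(0_d,u,\alpha)$ from \thm \ref{surfaceregularity} is a sound idea, and the residual endpoint behaviour it leaves is precisely what the paper's own proof also glosses over.
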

To go beyond this consistency result, we require the existence of a directional density quantile as in (\ref{DQF}).  For $(u,\alpha) \in \Sd\times \Delta$ define 
$$ h(u,\alpha) = h_u(\alpha)= f_{\langle X,u\rangle}\circ F_{\langle X,u\rangle}^{-1}(\alpha).$$

\noindent $\cond{1}$ For all $u \in \Sd$, the random variable $\langle X,u\rangle$ has a continuous density $f_{\langle X,u\rangle}>0$ on $F_{\langle X,u\rangle}^{-1}\left(\Delta\right)$ , moreover, for some $m$ and $M$ 

$$0< m\leq\inf_{\alpha \in \Delta}\inf_{u \in \Sd} h_u(\alpha)\leq \sup_{\alpha \in \Delta}\sup_{u \in \Sd} h_u(\alpha)\leq M < +\infty$$

Remark that $\cond{1}$ does not imply that $P$ has a density on $\mathbb{R}^{d}$. However $\cond{1}$ implies  \condpm{-} and \condpm{+}  with
$$m(z-y) \leq P(H(O, u,y,z)) \leq M(z-y), \ \ \  y<z, \ \  u \in \Sd, \ \ y,z \in \mathcal{Y}_{\Delta}(O,u)$$

In particular, under $\cond{1}$ $P$ has no discrete component in its Lebesgue--Nikodym decomposition, and likewise none of the marginal laws of P, has a discrete component since none of their linear combinations has.\smallskip
\subsection{Uniform Weak Convergence}
In order to state the central limit theorem, we first define the limiting Gaussian process $\G_P$. Let $\B_P$ be the $P$-Brownian bridge indexed by half-spaces, that is the zero mean Gaussian process on $\mathcal H$ having covariance $cov(\B_P(H),\B_P(H')) = P(H\cap H')-P(H)P(H')$, for $(H,H')\in \mathcal H \times \mathcal H$. Under $\cond{1}$ the random function
\begin{equation}
\G_P(u,\alpha):=\frac{\B_P(H(u,\alpha))}{h(u,\alpha)}, \ \text{for } (u,\alpha)\in\Sd\times\Delta
\end{equation}
  is a bounded centered Gaussian process indexed by the compact parameter set $\Sd\times\Delta$ with covariance function given by
\begin{equation}\label{covGp}
cov(\G_{P}(u_{1},\alpha_{1}),\G_{P}(u_{2},\alpha_{2}))=\frac{P(H(u_{1},\alpha_{1})\cap H(u_{2},\alpha_{2}))-\alpha_{1}\alpha_{2}}{h(u_{1},\alpha_{1})h(u_{2},\alpha_{2})}.%
\end{equation}
We also set $\overrightarrow{\G}_P(u,\alpha):=\mathbb{G}_{P}(u,\alpha)\cdot u, \ \text{for } (u,\alpha)\in\Sd\times\Delta.$  
To state the regularity condition $\cond{2}$ ensuring the weak convergence, we need to introduce for all $0<\gamma<\gamma_0$, the quantity
\begin{equation}\label{rhogammaeps}
\rho(\gamma) = \sup_{\abs{\eps'}<\gamma} \sup_{u \in \Sd} \sup_{\alpha \in \Delta} \left|F_{\langle X,u\rangle}(Y(O,u,\alpha)+\eps') - \alpha - h(u,\alpha)\eps'\right|
\end{equation}
that controls the expansion of $F_{\langle X,u\rangle}$ in the $\gamma$-neighborhood of $Y(O,u,\alpha)$. 

\noindent $\cond{2}$ We assume that
 $$\lim_{\gamma \to 0} \frac{\sqrt{\log \log(1/\gamma)}}{\gamma}\rho(\gamma)= 0.$$
 
 \begin{theorem}[Uniform Central Limit Theorem]\label{uclt} Under $\cond{1}$ and $\cond{2}$ 
the process $\sqrt{n}(Y_{n}-Y)$ weakly converges to  $\mathbb{G}_{P}$\ on the set of bounded real functions on $\mathbb{S}_{d-1}\times\Delta$, endowed with the supremum norm. Likewise, the process $\sqrt{n}(Q_{n}-Q)$ indexed by $\mathbb{S}_{d-1}\times\Delta$ weakly converges to $\overrightarrow{\G}_P$  on the set of bounded $\R^d$ valued functions on $\mathbb{S}_{d-1}\times\Delta$, endowed with the supremum norm.
\end{theorem}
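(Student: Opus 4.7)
The plan is to establish a uniform Bahadur-type linearization of the quantile process in terms of the empirical process $\nu_n(H) = \sqrt{n}(P_n - P)(H)$ indexed by half-spaces, and then appeal to Donsker's theorem plus the continuous mapping principle. By \eqref{IndepError}, $Y_n - Y$ does not depend on $O$, so I fix $O = 0_d$ and write $Y_n(u,\alpha)$, $Y(u,\alpha)$ throughout. Because the family $\cH$ of half-spaces is a VC class with envelope $1$, $\nu_n$ weakly converges to the $P$-Brownian bridge $\B_P$ in $\ell^\infty(\cH)$. Under \cond{1} (which forces \condpm{-} and \condpm{+}), \prop \ref{psicont} gives continuity of $(u,\alpha) \mapsto H(u,\alpha)$, and $h$ is continuous and uniformly bounded below on $\Sd \times \Delta$; hence the map $\B \mapsto ((u,\alpha) \mapsto -\B(H(u,\alpha))/h(u,\alpha))$ is continuous from $\ell^\infty(\cH)$ into $\ell^\infty(\Sd \times \Delta)$, and its image of $\B_P$ is precisely $\G_P$ with the covariance \eqref{covGp}.

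The heart of the proof is the linearization
\begin{equation*}
\sup_{(u,\alpha) \in \Sd \times \Delta}\Bigl| \sqrt{n}\,(Y_n(u,\alpha) - Y(u,\alpha)) + \frac{\nu_n(H(u,\alpha))}{h(u,\alpha)}\Bigr| = o_{\P}(1).
\end{equation*}
To derive it, I would use \prop \ref{surestim} to write $P_n(H(u,Y_n(u,\alpha))) - \alpha = O(d/n)$ a.s., and combine it with the first-order expansion of $F_{\sca{X}{u}}$ at $Y(u,\alpha)$ whose remainder is controlled by $\rho$ in \eqref{rhogammaeps}. Setting $\eps_n(u,\alpha) = Y_n(u,\alpha) - Y(u,\alpha)$, subtracting and multiplying by $\sqrt n$ gives
\begin{equation*}
h(u,\alpha)\sqrt{n}\,\eps_n(u,\alpha) = -\nu_n(H(u, Y_n(u,\alpha))) - \sqrt{n}\,R_n(u,\alpha) + O(d/\sqrt{n}),
\end{equation*}
with $|R_n(u,\alpha)| \leq \rho(|\eps_n(u,\alpha)|)$. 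Two ingredients close the argument. First, starting from \thm \ref{LGNU} and iterating through the same linearization, one bootstraps a uniform LIL $\|\eps_n\|_\infty = O(\sqrt{\log\log n /n})$ a.s. Second, the VC modulus of continuity of $\nu_n$ on half-space slabs of $P$-measure at most $\delta$ is of order $\sqrt{\delta \log(1/\delta)}$, so for $\delta \asymp \|\eps_n\|_\infty$ it becomes $O(n^{-1/4}\sqrt{\log n}\,(\log\log n)^{1/4}) = o(1)$, allowing the replacement of $H(u,Y_n(u,\alpha))$ by $H(u,\alpha)$ inside $\nu_n$. Condition \cond{2} is tailored exactly to make $\sqrt{n}\,\rho(\sqrt{\log\log n /n}) = o(1)$ and thereby absorb the Taylor remainder.

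Given the linearization, continuous mapping delivers the weak convergence of $\sqrt{n}(Y_n - Y)$ to $\G_P$. The statement for $\sqrt{n}(Q_n - Q)$ is then automatic: since $Q_n(0_d, u, \alpha) - Q(0_d, u, \alpha) = (Y_n - Y)(u,\alpha)\,u$ and $\|u\|=1$, the map $f \mapsto ((u,\alpha) \mapsto f(u,\alpha)\,u)$ is an isometry from $\ell^\infty(\Sd \times \Delta)$ into the space of bounded $\R^d$-valued functions under the supremum of Euclidean norms, so continuous mapping transfers the weak convergence. The main obstacle I anticipate is the uniform-in-$(u,\alpha)$ modulus of continuity for $\nu_n$ over thin half-space slabs, requiring a VC-entropy estimate uniform in both direction and slab width; together with \cond{2} it delivers precisely the dimension-free rate $b_n$ that also governs the later Bahadur--Kiefer theorem.
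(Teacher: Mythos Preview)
Your proposal is correct and the core linearization argument---LIL rate for $\|Y_n-Y\|$, modulus of continuity of $\nu_n$ on thin slabs, and \cond{2} to kill the Taylor remainder $\sqrt{n}\,\rho(\gamma_n)$---is exactly the paper's Step~I (Section~4.7), with your anticipated ``VC modulus on slabs'' being precisely Proposition~\ref{lemmeAlphaN}. The difference lies only in how weak convergence is extracted from the linearization: you invoke Donsker on the VC class $\cH$ and the continuous mapping theorem, whereas the paper (Steps~II--III) routes through the Berthet--Mason strong approximation, constructing explicit versions $\G_n$ with $\|\E_n-\G_n\|_{\Sd\times\Delta}\to 0$ a.s.\ and concluding via the Prokhorov--L\'evy distance. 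Your path is the more economical one for the UCLT alone; the paper's detour is taken because the same construction simultaneously delivers the strong approximation Theorem~3.4. Two minor remarks: the continuity of $(u,\alpha)\mapsto H(u,\alpha)$ in $L^2(P)$ follows from Theorem~\ref{surfaceregularity} (not Proposition~\ref{psicont}, which concerns $\Psi$), and you need not bootstrap the LIL from consistency---it is already available as Theorem~\ref{LLNU} and specialized under \cond{1} in Lemma~\ref{yny}.
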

\thm \ref{uclt} is a weak convergence statement involving jointly all quantile surfaces for $\alpha \in \Delta$. In particular, we have the following CLT for finite set of points on any of these surfaces. 
\begin{corollary}\label{cortcl}
Let $\left(  O_{1},u_{1},\alpha_{1}\right)  ,...,\left(  O_{k},u_{k},\alpha_{k}\right)  $ dans $\R^d\times\mathbb{S}_{d-1}\times\Delta$. Under $\cond{1}$ and $\cond{2}$, we have
\[
\sqrt{n}\left(
\begin{array}
[c]{c}%
Y_{n}\left(  O_{1},u_{1},\alpha_{1}\right)  -Y\left(  O_{1},u_{1},\alpha
_{1}\right) \\
...\\
Y_{n}\left(  O_{k},u_{k},\alpha_{k}\right)  -Y\left(  O_{k},u_{k},\alpha
_{k}\right)
\end{array}
\right)  \underset{n \rightarrow \infty}{\overset{\mathcal{L}aw}{\longrightarrow}} \mathcal{N}\left(  0_{k},\Sigma\right)
\]
with $\Sigma$ the covariance matrix defined by
\begin{eqnarray*}
\Sigma_{i,j}&=&\frac{P(H(O_{i},u_{i},Y\left(  O_{i},u_{i},\alpha_{i}\right)
)\cap H(O_{j},u_{j},Y\left(  O_{j},u_{j},\alpha_{j}\right)  ))-\alpha
_{i}\alpha_{j}}{f_{\left\langle X-O_{i},u_{i}\right\rangle }\circ
F_{\left\langle X-O_{i},u_{i}\right\rangle }^{-1}(\alpha_{i}).f_{\left\langle
X-O_{j},u_{j}\right\rangle }\circ F_{\left\langle X-O_{j},u_{j}\right\rangle
}^{-1}(\alpha_{j})}\\
&=& \frac{P(H(u_{i},\alpha_{i})\cap H(u_{j},\alpha_{j}))-\alpha
_{i}\alpha_{j}}{h(u_{i},\alpha_i)h(u_{j},\alpha_j)}.
\end{eqnarray*}
\end{corollary}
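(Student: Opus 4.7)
The plan is to deduce Corollary \ref{cortcl} as a direct consequence of the uniform CLT (Theorem \ref{uclt}), combined with the shift invariance identity (\ref{IndepError}) and elementary bookkeeping on distribution functions and densities.

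First I would invoke Theorem \ref{uclt}: under $\cond{1}$ and $\cond{2}$, the process $\sqrt{n}(Y_n - Y)$ converges weakly to $\mathbb{G}_P$ on the space of bounded functions on $\mathbb{S}_{d-1} \times \Delta$ with the sup-norm topology. Since finite-dimensional evaluation $f \mapsto (f(u_i, \alpha_i))_{1 \leq i \leq k}$ is a continuous linear map from this space into $\R^k$, the continuous mapping theorem immediately yields the convergence of
\begin{equation*}
\sqrt{n}\bigl(Y_n(0_d, u_i, \alpha_i) - Y(0_d, u_i, \alpha_i)\bigr)_{1 \leq i \leq k}
\end{equation*}
to a centered Gaussian vector whose covariance matrix is the one inherited from $\mathbb{G}_P$. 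To reduce the statement with varying origins $O_i$ to this evaluation, I would use the shift invariance (\ref{IndepError}): for every $i$, $Y_n(O_i, u_i, \alpha_i) - Y(O_i, u_i, \alpha_i) = Y_n(0_d, u_i, \alpha_i) - Y(0_d, u_i, \alpha_i)$, so the $O_i$ play no role whatsoever in the random vector under study and the limit law is the same.

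It then remains to identify the covariance. From (\ref{covGp}) the covariance of $\mathbb{G}_P(u_i,\alpha_i)$ and $\mathbb{G}_P(u_j,\alpha_j)$ equals $(P(H(u_i,\alpha_i) \cap H(u_j,\alpha_j)) - \alpha_i \alpha_j)/(h(u_i,\alpha_i) h(u_j,\alpha_j))$, which is already the second expression appearing in the statement. To match the first expression I need two elementary identities. Using (\ref{Yaou}), $Y(O,u,\alpha) = F^{-1}_{\langle X,u\rangle}(\alpha) - \langle O,u\rangle$, so $H(O,u,Y(O,u,\alpha)) = \{x : \langle x,u\rangle \leq F^{-1}_{\langle X,u\rangle}(\alpha)\} = H(u,\alpha)$, independently of $O$. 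This takes care of the numerator. For the denominator, the translation $y \mapsto y + \langle O,u\rangle$ yields $f_{\langle X - O, u\rangle}(y) = f_{\langle X,u\rangle}(y + \langle O,u\rangle)$ and $F^{-1}_{\langle X-O,u\rangle}(\alpha) = F^{-1}_{\langle X,u\rangle}(\alpha) - \langle O,u\rangle$, so their composition equals $f_{\langle X,u\rangle} \circ F^{-1}_{\langle X,u\rangle}(\alpha) = h(u,\alpha)$. Substituting these identities shows that the two displayed expressions for $\Sigma_{i,j}$ are equal.

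There is essentially no serious obstacle here, since the corollary is a packaging of Theorem \ref{uclt} for finitely many points; the only thing to be careful about is the bookkeeping that turns the $O$-dependent covariance formula in the statement into the $O$-free covariance of $\mathbb{G}_P$, which is exactly what (\ref{IndepError}) together with (\ref{Yaou}) provides.
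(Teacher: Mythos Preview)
Your proof is correct and follows the same route as the paper, which presents Corollary \ref{cortcl} as an immediate finite-dimensional consequence of Theorem \ref{uclt} without giving a separate argument. Your bookkeeping---reducing the $O_i$-dependence via (\ref{IndepError}) and reconciling the two covariance expressions through (\ref{Yaou}) and the translation invariance of the density-quantile function---is more explicit than what the paper records, but it is exactly what is implicitly being invoked.
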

Note that \thm \ref{uclt} and Corollary \ref{cortcl} are exact generalizations of \prop \ref{prop2.2} and \cor \ref{cor2.1}, respectively.
\subsection{The main result}

 To ensure the Bahadur-Kiefer type representation, we need the following stronger condition. 
 
 \noindent$\cond{3}$ We suppose that 
 $$\lim_{\gamma \to 0} \frac{\rho(\gamma)\sqrt{\log \log(1/\gamma)}}{\gamma^{3/2}\sqrt{\log(1/\gamma)}}= 0.$$
 This condition can be replaced by one of the following conditions, which are more restrictive but easier to check.
 
\noindent$\condp{3}$ There exists $r>1/2$ and $C^*>0$ such that for all $0<\gamma <\gamma_0$ 
 $$ \rho(\gamma) \leq C^* \gamma^{1+r}.$$
%
 \noindent$\cond{4}$ The function $h$ is differentiable on $\Sd \times \Delta $ with uniformly bounded derivatives. 
 ~\\

Under $\cond{4}$, the assumption $\condp{3}$ holds true with $r=1$. Moreover we have
$\cond{4} \Rightarrow\condp{3} \Rightarrow\cond{3} \Rightarrow\cond{2}$. Let $\Lambda_{n}=\sqrt{n}(P_{n}-P)$ be the empirical process
indexed by $\mathcal{H}$ and define%

\[
\mathbb{E}_{n}(u,\alpha)=\Lambda_{n}(H(u,\alpha))=\sqrt{n}(P_{n}(H(u,\alpha))-\alpha)
\]

\begin{theorem}[Bahadur-Kiefer type representation]\label{BKR} 
Under $\cond{1}$ and $\cond{2}$
we have
\begin{equation}
\lim_{n\rightarrow\infty}\ \left\Vert \sqrt{n}(Y_{n}-Y)+\frac{\mathbb{E}_{n}}{h} \right\Vert _{S_{d-1}\times\Delta}=0\quad a.s.
\end{equation}
and for any $\theta>0$ there exists $c_{\theta}(m,M,d)>0$ and $n_{\theta}(m,M,d)>0$ such that we have, for all $n>n_{\theta}$,
\begin{equation}\label{BKspeed1}
\mathbb{P}\left(  \left\Vert \sqrt{n}(Y_{n}-Y)+\frac{\mathbb{E}_{n}}{h} \right\Vert_{\R^d\times S_{d-1}\times\Delta}\geq c_{\theta} a_n\right)  \leq\frac{1}{n^{\theta}},
\end{equation}
where $$a_n=\sqrt{n}\rho\left(\sqrt{\log\log n/ n}\right)\lor \frac{(\log n)^{1/2}(\log\log n)^{1/4}}{n^{1/4}}.$$
If moreover \cond{3} holds then
\begin{equation}\label{BKspeed2}
 \left\Vert \sqrt{n}(Y_{n}-Y)+\frac{\mathbb{E}_{n}}{h} \right\Vert_{S_{d-1}\times\Delta}=O_{a.s.}\left( \frac{(\log n)^{1/2}(\log\log n)^{1/4}}{n^{1/4}}\right).
\end{equation}
\end{theorem}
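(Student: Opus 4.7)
By \eqref{IndepError} and the fact that the half-space $H(u,\alpha)$ is also $O$-free, both $\sqrt{n}(Y_n-Y)$ and $\E_n/h$ are independent of $O$, so the supremum over $\R^d$ in \eqref{BKspeed1} collapses to a supremum over $\Sd\times\Delta$, and I may fix $O=0_d$ throughout. The starting point is the empirical quantile sandwich of \prop \ref{surestim}, $P_n(H(u,Y_n(u,\alpha)))\in[\alpha,\alpha+d/n]$, combined with $P(H(u,Y(u,\alpha)))=\alpha$ from $\condpm{-}$. Subtracting, splitting through $P$ and using the Taylor-type expansion of $F_{\langle X,u\rangle}$ controlled by $\rho$ in \eqref{rhogammaeps}, one derives the pointwise Bahadur--Kiefer identity
\begin{equation*}
\sqrt{n}(Y_n-Y)+\frac{\E_n(u,\alpha)}{h(u,\alpha)}=-\frac{\E_n(H(u,Y_n))-\E_n(H(u,\alpha))}{h(u,\alpha)}+O\!\left(\frac{\sqrt{n}\,\rho(\Vert Y_n-Y\Vert_{\Sd\times\Delta})}{m}\right)+O\!\left(\frac{d}{m\sqrt{n}}\right).
\end{equation*}

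The main remainder is the oscillation of $\E_n$ between $H(u,Y_n)$ and $H(u,Y)=H(u,\alpha)$, namely on a hyperband of random width $\Vert Y_n-Y\Vert_{\Sd\times\Delta}$. Since $\mathcal H$ is a Vapnik--Chervonenkis class of dimension $d+1$ and the hyperband class $\{H(0_d,u,y,y')\}$ is also VC, and since $P(\text{band of width }\eps)\leq M\eps$ under $\cond{1}$, I would invoke the sharp uniform modulus of continuity for empirical processes over VC classes (Alexander, Einmahl--Mason, Talagrand chaining): for deterministic $\delta_n\to 0$ with $\delta_n\geq C(\log n)/n$,
\begin{equation*}
\sup_{u\in\Sd}\ \sup_{|y-y'|\leq\delta_n}\bigl|\E_n(H(u,y))-\E_n(H(u,y'))\bigr|=O_{a.s.}\!\bigl(\sqrt{\delta_n\log(1/\delta_n)}\bigr),
\end{equation*}
with a matching non-asymptotic Talagrand-type tail. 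A first-pass LIL for $\E_n$ over $\mathcal H$, combined with the lower bound $h\geq m$ from $\cond{1}$, yields the preliminary uniform rate $\Vert Y_n-Y\Vert_{\Sd\times\Delta}=O_{a.s.}(\sqrt{\log\log n/n})$; substituting $\delta_n=C\sqrt{\log\log n/n}$ then gives $\sqrt{\delta_n\log(1/\delta_n)}\asymp b_n=n^{-1/4}(\log n)^{1/2}(\log\log n)^{1/4}$.

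The $\rho$-remainder contributes exactly $\sqrt{n}\,\rho(\sqrt{\log\log n/n})$, matching the $a_n$ term in \eqref{BKspeed1}. Under $\cond{2}$ this quantity is $o(1)$, giving the almost-sure convergence to zero; a direct check shows that under $\cond{3}$ it is $o(b_n)$, so \eqref{BKspeed2} follows from the oscillation bound alone. The non-asymptotic statement \eqref{BKspeed1} is then obtained by applying Talagrand's concentration inequality to the oscillation at scale $\delta_n$, with $\theta$ absorbed into the concentration exponent and a union bound over $n$. The main obstacle is precisely establishing this sharp uniform oscillation modulus with the correct $\sqrt{\log(1/\delta)}$ factor and an explicit non-asymptotic tail, since that is what produces the surprising dimension-free exponent $1/4$: the VC dimension $d+1$ only enters multiplicative constants, not the power of $n$.
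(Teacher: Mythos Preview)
Your proposal is correct and follows essentially the same route as the paper: establish the preliminary rate $\|Y_n-Y\|_{\Sd\times\Delta}=O_{a.s.}(\sqrt{\log\log n/n})$ via Alexander's LIL over the VC class $\mathcal H$ together with $h\geq m$ (the paper packages this as \thm \ref{LLNU} and \lemme \ref{yny}), then decompose the Bahadur--Kiefer error into an empirical-process oscillation over hyperbands of width $\gamma_n=C\sqrt{\log\log n/n}$ plus the Taylor remainder $\sqrt n\,\rho(\gamma_n)/m$, and control the oscillation by Talagrand's inequality combined with the Einmahl--Mason moment bound (the paper's \prop \ref{lemmeAlphaN}). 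The only cosmetic difference is that you extract the key inequality directly from the sandwich $P_n(H_n)\in[\alpha,\alpha+d/n]$ of \prop \ref{surestim}, whereas the paper reaches the same bound by rewriting $Y_n$ as an infimum and sandwiching; the resulting error terms $\Theta_{1,n}$ (oscillation) and $\sqrt n\,\Theta_{2,n}$ ($\rho$-term) are identical to yours.
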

Note that \thm \ref{BKR} contains \prop \ref{prop2.4} for $d=1$.  It is a good surprise that the order of the rate of convergence in (\ref{BKspeed2}) is dimension free. Note that $c_{\theta}$ can be computed explicitly and depends on the dimension $d$ and $P$. By \thm \ref{BKR} the multivariate empirical quantile surfaces inherit the properties of the empirical process.
\subsection{Non asymptotic strong approximation}
The following Gaussian approximation is useful to construct explicit confident bands around empirical quantile surfaces by using Monte-Carlo methods. As a matter of fact, using (\ref{TAF}) and (\ref{TAFspeed}) it remains to plug-in any estimator of $h$ in the covariance (\ref{covGp}) in order to compute  joint confident intervals along a very large set of points from $Q_{n}(O,\alpha)$. Even for fixed $n$ the probability of such confident band has an explicit upper bound.

\begin{theorem}[Uniform Strong Approximation with rate] Under $\cond{1}$ and $\cond{2}$  one can construct on the same probability space $\left(  \Omega,\mathcal{T},\mathbb{P}\right)$ an i.i.d. sequence $X_{n}$\textit{ with distribution} $P$ and a sequence $\mathbb{G}_{n}$ of versions of $\mathbb{G}_{P}$ in such a way that for $O\in\mathbb{R}^{d}$, $ u\in\mathbb{S}_{d-1}$, $\alpha\in\Delta$

\begin{equation}\label{TAF}
Y_{n}(O,u,\alpha)=Y(O,u,\alpha)+\frac{\mathbb{G}%
_{n}(u,\alpha)}{\sqrt{n}}+\frac{\mathbb{Z}_{n}(u,\alpha)}{\sqrt{n}}
\end{equation}
\textit{where }$\mathbb{Z}_{n}=\sqrt{n}(Y_{n}-Y%
)-\mathbb{G}_{n}$\textit{ is such that }

\begin{equation}\label{TAFspeed}
\lim_{n\rightarrow\infty}\ \left\Vert \mathbb{Z}_{n}\right\Vert _{S_{d-1}%
\times\Delta}=0 \quad a.s.
\end{equation}
If $P$ moreover satisfies \cond{3} then $\mathbb{G}_{n}$ can be constructed such that for $v_{d}=1/(2+10d)$ and $w_{d}=(4 + 10d)/(4 + 20d)$, there exists $n_{\theta}(m,M,d)>0$ such that we have, for all $n>n_{\theta}$,
\begin{equation}
\mathbb{P}\left(  \left\Vert \mathbb{Z}_{n}\right\Vert _{S_{d-1}\times\Delta
}\geq c_{\theta}\frac{(\log n)^{w_{d}}}{n^{v_{d}}}\right)  \leq\frac
{1}{n^{\theta}}. 
\end{equation}
\end{theorem}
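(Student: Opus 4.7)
The plan is to combine the Bahadur--Kiefer representation of \thm \ref{BKR} with a strong Gaussian approximation of the empirical process $\Lambda_n$ indexed by the class $\mathcal{H}$ of half-spaces in $\mathbb{R}^d$. Recall that, by the translation identity (\ref{IndepError}), the process $Y_n - Y$ does not depend on $O$, so the uniformity in $O \in \mathbb{R}^d$ in the final statement introduces no extra entropy and it suffices to work on the compact parameter set $\mathbb{S}_{d-1} \times \Delta$.

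First I would write, using $\mathbb{E}_n(u,\alpha)=\Lambda_n(H(u,\alpha))$,
\begin{equation*}
\sqrt{n}\bigl(Y_n(O,u,\alpha) - Y(O,u,\alpha)\bigr) = -\frac{\mathbb{E}_n(u,\alpha)}{h(u,\alpha)} + R_n(u,\alpha),
\end{equation*}
where by \thm \ref{BKR} one has $\|R_n\|_{\mathbb{S}_{d-1}\times\Delta} \to 0$ a.s.\ under \cond{1} and \cond{2}, while under \cond{3} it satisfies the rate $O_{a.s.}((\log n)^{1/2}(\log\log n)^{1/4}/n^{1/4})$ together with the non asymptotic exponential tail (\ref{BKspeed1}).

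Next I would invoke the Berthet--Mason (2006) strong approximation. Since $\mathcal{H}$ is a uniformly bounded VC class in $\mathbb{R}^d$ (VC index $d+1$), their result yields, on a suitably enlarged probability space, a coupling of the i.i.d.\ sample $(X_n)$ with a sequence $\mathbb{B}_n$ of versions of the $P$-Brownian bridge indexed by $\mathcal{H}$ such that
\begin{equation*}
\|\Lambda_n - \mathbb{B}_n\|_{\mathcal{H}} = O_{a.s.}\!\left(\frac{(\log n)^{w_d}}{n^{v_d}}\right), \qquad v_d=\tfrac{1}{2+10d},\ w_d=\tfrac{4+10d}{4+20d},
\end{equation*}
together with exponential deviation bounds of the form $\mathbb{P}(\|\Lambda_n-\mathbb{B}_n\|_{\mathcal{H}}\ge c'_\theta (\log n)^{w_d}/n^{v_d})\le n^{-\theta}$ for $n$ large enough. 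I would then define
\begin{equation*}
\mathbb{G}_n(u,\alpha) := -\frac{\mathbb{B}_n(H(u,\alpha))}{h(u,\alpha)},
\end{equation*}
which is a version of $\mathbb{G}_P$ since $-\mathbb{B}_n$ is itself a $P$-Brownian bridge with the same covariance, so that $\mathbb{G}_n$ has the covariance (\ref{covGp}).

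Combining the two decompositions,
\begin{equation*}
\mathbb{Z}_n = \sqrt{n}(Y_n - Y) - \mathbb{G}_n = R_n + \frac{\mathbb{B}_n(H(\cdot,\cdot)) - \Lambda_n(H(\cdot,\cdot))}{h},
\end{equation*}
and using $\inf_{\mathbb{S}_{d-1}\times\Delta} h \ge m >0$ from \cond{1}, the two terms are controlled respectively by \thm \ref{BKR} and the Berthet--Mason bound. Under \cond{1}, \cond{2} alone both terms vanish a.s., giving (\ref{TAFspeed}). Under \cond{3}, since $v_d < 1/4$ for $d\ge 1$, the Berthet--Mason rate dominates the Bahadur--Kiefer one and one obtains the announced $(\log n)^{w_d}/n^{v_d}$ rate.

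The main obstacle I anticipate is the bookkeeping of the probability estimates in the quantitative regime: one has to add the exceptional events coming from (\ref{BKspeed2}) and from the Berthet--Mason deviation inequality, track the constant $c_\theta$ through its dependence on $m, M, d$ and the VC parameters of $\mathcal{H}$, and verify that the resulting bound is still of the form $n^{-\theta}$. A secondary but essential point is to check that the $O$-free reduction provided by (\ref{IndepError}) is compatible with the chosen coupling, so that the final statement indeed holds uniformly in $(O,u,\alpha) \in \mathbb{R}^d \times \mathbb{S}_{d-1} \times \Delta$ without sacrificing the dimension-free character of $v_d, w_d$.
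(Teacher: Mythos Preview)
Your proposal is correct and follows essentially the same route as the paper: decompose $\mathbb{Z}_n$ into the Bahadur--Kiefer remainder from \thm \ref{BKR} plus $(\mathbb{B}_n-\Lambda_n)/h$, control the first by \thm \ref{BKR} and the second by the Berthet--Mason strong approximation for the VC class $\mathcal{H}$, then note that under \cond{3} the Berthet--Mason rate $n^{-v_d}(\log n)^{w_d}$ dominates since $v_d<1/4$. Your handling of the sign via $-\mathbb{B}_n$ being again a $P$-Brownian bridge and of the $O$-freeness via (\ref{IndepError}) matches the paper's implicit treatment.
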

\subsection{Law of the iterated logarithm}
Recall that $\Psi$ is given in (\ref{psibande}).
\begin{theorem}[Law of the Iterated Logarithm]\label{LLNU}
Under \condpm{-} and \condpm{+} 
$$\limsup_{n \to \infty} \frac{\norme{Y_n-Y}_{\Sd\times\Delta}}{\Psi^{-1}\left(\sqrt{(\log \log n)/n}\right)} < \infty \ \ a.s.$$
\end{theorem}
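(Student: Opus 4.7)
The plan is to reduce the quantile deviation $\norme{Y_n-Y}_{\Sd\times\Delta}$ to a sup-norm deviation of the empirical process over the class $\mathcal H$ of half-spaces, and then invoke the bounded LIL for VC classes. First, by identity (\ref{IndepError}), the error $Y_n-Y$ does not depend on the anchor, so I fix $O=0_d$ throughout and set $y=Y(u,\alpha)$, for which $P(H(0_d,u,y))=\alpha$ by \condpm{-}.

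The key deterministic sandwich reads as follows. If $Y_n(u,\alpha)>y+\eps$, the definition of $Y_n$ as an infimum forces $P_n(H(0_d,u,y+\eps))<\alpha$, hence
$$(P-P_n)(H(0_d,u,y+\eps))>P(H(0_d,u,y,y+\eps))\geq\Psi(\eps).$$
Symmetrically, if $Y_n(u,\alpha)\leq y-\eps$, right-continuity of $y'\mapsto P_n(H(0_d,u,y'))$ gives $P_n(H(0_d,u,y-\eps))\geq\alpha\geq P(H(0_d,u,y-\eps))$, whence $(P_n-P)(H(0_d,u,y-\eps))\geq\Psi(\eps)$. Taking the supremum over $(u,\alpha)\in\Sd\times\Delta$ yields the inclusion
$$\{\norme{Y_n-Y}_{\Sd\times\Delta}>\eps\}\subset\bigl\{\sup\nolimits_{H\in\mathcal H}|(P_n-P)(H)|\geq\Psi(\eps)\bigr\}.$$

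To conclude, I would apply the bounded LIL for the empirical process indexed by $\mathcal H$ (a VC class of index $d+1$), as in \cite{ShoWel86}: there exists $K=K(d,P)<\infty$ with
$$\limsup_{n\to\infty}\sqrt{\frac{n}{\log\log n}}\,\sup_{H\in\mathcal H}|(P_n-P)(H)|\leq K\quad\text{a.s.}$$
Plugging $\eps_n=\Psi^{-1}\bigl(K\sqrt{\log\log n/n}\bigr)$ into the sandwich and using (\ref{psinv}), which gives $\Psi(\eps_n)\geq K\sqrt{\log\log n/n}$, rules out the above event for $n$ large, a.s. Therefore $\norme{Y_n-Y}_{\Sd\times\Delta}\leq\Psi^{-1}\bigl(K\sqrt{\log\log n/n}\bigr)$ almost surely eventually, which is the claimed LIL once the multiplicative constant $K$ is absorbed in the finite limsup.

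The main technical obstacle is boundary behavior: when $\alpha$ is close to $\alpha^+$, the translated endpoint $y+\eps$ may leave $\mathcal Y_\Delta(0_d,u)$, so the lower bound $P(H(0_d,u,y,y+\eps))\geq\Psi(\eps)$ is not formally covered by the definition of $\mathcal B_\eps$. I would circumvent this by running the argument on a slightly enlarged parameter set $\Delta'\supset\Delta$ contained in $[1/2,1)$ on which \condpm{\pm} still hold, observing that the associated $\Psi'$ is comparable to $\Psi$ near $0$; since $\Psi^{-1}(\sqrt{\log\log n/n})\to 0$, for $n$ large all intervening half-spaces stay well inside the enlarged domain, and the resulting bound differs from the claimed one only by a constant factor.
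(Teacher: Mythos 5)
Your approach is essentially the paper's: both reduce the deviation $\norme{Y_n-Y}_{\Sd\times\Delta}$ to a deviation of the empirical process over the VC class $\mathcal H$ via the $\Psi$-sandwich, then invoke the bounded LIL for VC-class empirical processes. Your sandwich is actually a little cleaner than the paper's — you work directly with the half-spaces $H(0_d,u,y\pm\eps)$, so you never have to pass through the empirical quantile half-space $H_n(u,\alpha)$ and incur the $\pm d/n$ correction from \prop\ \ref{surestim} that appears in the paper's contradiction argument.

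However, there is a genuine gap in the last sentence. You bound $\norme{Y_n-Y}$ eventually by $\Psi^{-1}\bigl(K\sqrt{\log\log n/n}\bigr)$ and then claim that $K$ ``is absorbed in the finite limsup.'' That step requires $\Psi^{-1}(Kt)/\Psi^{-1}(t)$ to stay bounded as $t\to0$, and this is not true for an arbitrary continuous strictly increasing $\Psi$ with $\Psi(0)=0$ when $K>1$: take $\Psi(\eps)=1/\log(1/\eps)$, so $\Psi^{-1}(t)=e^{-1/t}$ and $\Psi^{-1}(Kt)/\Psi^{-1}(t)=\exp\bigl((1-1/K)/t\bigr)\to\infty$. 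Two ways to close the gap. (a) Do what the paper does: use Alexander's LIL with its explicit constant $\sqrt{2}\,\sigma_P\le\sqrt{2}/2<1$ for the indicator class $\mathcal H$; since the constant is strictly below $1$, you can take $\eps_n=\Psi^{-1}\bigl(\sqrt{\log\log n/n}\bigr)$ directly and there is no $K$ to absorb (the paper in fact obtains $\limsup\le 1$). (b) Prove that the ratio is bounded: $\Psi$ is superadditive on $[0,\eps^+)$ — a band of width $\eps_1+\eps_2$ with both endpoints in the interval $\mathcal Y_\Delta(O,u)$ splits into sub-bands of widths $\eps_1$ and $\eps_2$ all of whose endpoints remain in $\mathcal Y_\Delta(O,u)$, so $\Psi(\eps_1+\eps_2)\ge\Psi(\eps_1)+\Psi(\eps_2)$; this forces $\Psi^{-1}(Kt)\le\lceil K\rceil\,\Psi^{-1}(t)$. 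Either fix is short, but as written the proof does not establish the stated $\limsup<\infty$ under $\condpm{-}$, $\condpm{+}$ alone.

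Your closing remark on the boundary of $\mathcal Y_\Delta$ identifies a real issue that the paper's own argument glosses over as well; enlarging $\Delta$ slightly inside $[1/2,1)$ (or, again, invoking superadditivity of $\Psi$ to compare with interior bands) is a reasonable way to handle it.
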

\begin{remark}
If instead of \condpm{-} and \condpm{+} we assume the stronger $\cond{1}$, then the law of iterated logarithm can be rewritten in the following more classical form
$$\limsup_{n \to \infty} \frac{\norme{Y_n-Y}_{\Sd\times\Delta}}{\sqrt{(\log \log n)/n}} < \infty \ \ a.s.$$
In the particular case of a central symmetric distribution, we obtain exactly same result as for the quantile process on $\R$.
\end{remark}

\section{Proofs}
\subsection{Proof of \thm \ref{surfaceregularity}}
The proof of \thm  \ref{surfaceregularity} relies on the technical \lemme \ref{lem-tech1}. Its proof is postponed to the appendix.
~\\
\textsc{ Necessary Condition.} First,  under \condpm{-} and \condpm{+} $Q(O,\alpha)$ is a bounded set since (\ref{distset2}) holds, then for all $O\in\mathbb{R}^{d}$ there exists $r>0$ such that for all $u \in \S^d$, $\alpha \in \Delta$ we have
$O+Y(O,u,\alpha)u=Q(O,u,\alpha)\in B(O,r).$
Now, we show that  $(u,\alpha) \mapsto Q(O,u,\alpha)$ is continuous. If $(u,\alpha) \mapsto Q(O,u,\alpha)$ is not continuous, then there exists a sequence $(u_n)_{n \geq 1}$ in $\Sd$ and $(\alpha_n)_{n \geq 1}$ in $\Delta$ with $u_n \to u$ and $\alpha_n \to \alpha$ such that
$$\lim_{n \to \infty}Q(O,u_n,\alpha_n) \neq Q(O,u,\alpha).$$
Since $Q(O,u_n,\alpha_n)$ is bounded there exists a subsequence $(u_{n_j})_{j\geq 1}$ such that $u_{n_j} \to u$ and   $(\alpha_{n_j})_{j\geq 1}$ such that $\alpha_{n_j} \to \alpha$ with moreover $$\lim_{j \to \infty}Q(O,u_{n_j},\alpha_{n_j}) = Q_{\infty} = O+y_{\infty}u \in \R^d$$
where $y_{\infty} = \lim_{j \to \infty}Y(O,u_{n_j},\alpha_{n_j})<+\infty$ and $$y_{\infty} \neq y = Y(O,u,\alpha)$$ so that $Q_{\infty} \neq Q(O,u,\alpha)$. Suppose that $y<y_{\infty}$ and choose $y'$ such that $y<y'<y_{\infty}$. By \lemme \ref{lem-tech1}, there exists an increasing subsequence $(n_{j(k)})_{k \geq 1}$ with $n_{j(k)} \to +\infty$ and a decreasing sequence of sets $H_k$ such that
$$\bigcap_{k \geq 1} H_k = \emptyset, \ \ \ H(O,u,y')\setminus H(u_{n_{j(k)}},\alpha_{n_{j(k)}})\subset H_k \subset H(O,u,y')$$
and it follows that $ (H(O,u,y')\setminus H_k)_{k \geq 1}$ is increasing with $$\lim_{k \to \infty}\uparrow \left(H(O,u,y')\setminus H_k\right) =  \bigcup_{k \geq 1} \left( H(O,u,y')\setminus H_k\right) = H(O,u,y')\setminus \bigcap_{k \geq 1} H_k$$ 
hence
$$  \bigcup_{k \geq 1} \left( H(O,u,y')\setminus H_k\right) = H(O,u,y').$$
By the  lower continuity property of $P$ and \condpm{-}, we get
\begin{eqnarray*}
\alpha &\leq& P(H(O,u,y')) = \lim_{k \to \infty}\uparrow P(H(O,u,y')\setminus H_k ))\\
&\leq& \lim_{k \to \infty} P(H(O,u,y')\cap H(u_{n_{j(k)}},\alpha_{n_{j(k)}}))\\
&\leq& \lim_{k \to \infty}P(H(u_{n_{j(k)}},\alpha_{n_{j(k)}}))= \alpha
\end{eqnarray*}
and consequently,
$$P(H(O,u,y,y')) = P((H(O,u,y')\setminus H(u,\alpha))=0$$
then $P(H(O,u,y,y')) =0$ which contradicts \condpm{+}. The case $y _{\infty}<Y(O,u,\alpha)$ is analogous.
~\\
\textsc{Sufficient Condition.} We prove that if  $(u,\alpha) \to Y(O,u,\alpha)$ is bounded and continuous on $\Sd\times\Delta$ then \condpm{+} holds true.
To do so, we show that $$\neg \condpm{+}  \Rightarrow  Y(O,\cdot,\cdot) \text{ is not continuous on } \Sd\times \Delta$$
where $\neg \condpm{+}$ is the converse property of \condpm{+}. Suppose that $\neg\condpm{+}$ holds true and  $(u,\alpha) \mapsto Q(O,u,\alpha)$ is bounded and continuous on $\Sd\times \Delta$. By  $\neg \condpm{+}$, there exists $\eps_0>0$ such that $B_{0} \in \cB_{\eps_0}$ with $P(B_{0})=0$. Pick $u \in \Sd$ such that $B_0 = H(O,u,y,y+\eps_0)$ and $\alpha_0=  P(H(O,u,y+\eps_0)).$ We have
$\alpha_0 = P(H(O,u,y+\eps_0)) =P(H(O,u,y)\cup B_0) = P(H(O,u,y))$ then $Y(O,u,\alpha_0)\leq y$. Let $(\alpha^+_k)_{k\in\N}$ be a strictly decreasing sequence with $\alpha^+_k\downarrow \alpha_0$. Under \condpm{-} we have $H(O,u,y+\eps_0)\subsetneq H(u,\alpha_k^+)$ hence $Y(O,u,\alpha^+_k)\geq y+\eps_0.$ By continuity of $Y(O,\cdot,\cdot)$ we get
$$\lim_{k \to \infty}Y(O,u,\alpha^+_k) = Y(O,u,\alpha_0) \geq y+\eps_0 $$
and consequently $y+\eps_0 \leq Y(O,u,\alpha_0) \leq y $ which contradicts $\eps_0>0$.

\subsection{Proof of \prop \ref{psiequiv}}
The assumption \condpm{-} implies \condpsi{-}, since for $y = Y(O,u,\alpha)$ with $u \in \Sd$ and $\alpha \in \Delta$ we have by the continuity property of $P$
$$\lim_{\eps \to 0} \Psi(\eps) \leq \lim_{\eps \to 0}P(H(O,u,y,y+\eps)) = P(\partial H(u,\alpha))=0.$$
It remains to show that under \condpm{-} the assumption \condpm{+} implies \condpsi{+}. 
Suppose that $\Psi(\varepsilon_{0})=0$ for some $\eps_0>0$. There exists $u_{k}$ in $\Sd$ and $y_{k},y_{k}+\varepsilon_{0}\in\mathcal{Y}_{\Delta}(O,u_{k})$ such that%
\begin{equation}\label{limbandpsi}
\lim_{k\rightarrow\infty}P(H(O,u_{k},y_{k},y_{k}+\varepsilon_{0}))=0.
\end{equation}
Since $\mathcal{Y}_{\Delta}(O)={\textstyle\bigcup\nolimits_{u\in\mathbb{S}_{d-1}}}\mathcal{Y}_{\Delta}(O,u)$ is compact, we can extract a subsequence $(u_{k}^{\prime},y_{k}^{\prime}) \in \Sd\times\mathcal{Y}_{\Delta}(O,u'_k)$ having limit $(u_{0},y_{0})$. We have 
$$\mathcal{Y}_{\Delta}(O,u'_k) = [Y(O,u'_k,\alpha^-),\ Y(O,u'_k,\alpha^+)]$$
so by continuity of $u \to Y(O,u,\alpha)$ we get that $(u_{0},y_{0}) \in \Sd\times\mathcal{Y}_{\Delta}(O,u_0)$, i.e. for $(u_0,y_0+\eps_0)$. Set
$$B_{k}^{\prime}=H(O,u_{k}^{\prime},y_{k}^{\prime},y_{k}^{\prime}+\varepsilon_{0}).$$
By (\ref{limbandpsi}) we have $\lim_{k\rightarrow\infty}P(B_{k}^{\prime})=0$. We now show that
\[
\1_{B_{0}}\geqslant\lim_{k\rightarrow\infty}\1_{B_{k}^{\prime}}\geqslant
\1_{B_{0}\diagdown\partial B_{0}}=\1_{B_{0}}-\1_{\partial B_{0}}
\]
where $B_{0}=H(O,u_{0},y_{0},y_{0}+\varepsilon_{0}).$  First, if $x\notin B_{0}$ then there exists a $\delta-$neighborhood $V_{\delta}$ of $(u_0,y_0)$ in $\Sd \times \R$ such that $x\notin%
{\textstyle\bigcup\nolimits_{(u,y)\in V_{\delta}}}
H(O,u,y,y+\varepsilon_{0})$ thus for every $k$ big enough, $x\notin
B_{k}^{\prime}$. If $x\in\partial B_{0}$ we always have $\1_{B_{k}^{\prime}%
}(x)\geqslant \1_{B_{0}\diagdown\partial B_{0}}(x)=0$. Finally, if $x\in
B_{0}\diagdown\partial B_{0}$ there exists a $\delta-$neighborhood $V_{\delta}$ of $(u_0,y_0)$ in $\Sd \times \R$ such that $x\in%
{\textstyle\bigcap\nolimits_{(u,y)\in V_{\delta}}}
H(O,u,y,y+\varepsilon_{0})$ so for all $k$ big enough, $x\in
B_{k}^{\prime}$. Consequently,%
\[
P(B_{0})\geqslant\lim_{k\rightarrow\infty}P(B_{k}^{\prime})\geqslant
P(B_{0})-P(\partial B_{0})
\]
but by \condpm{-} and \condpm{+} we know that $P(B_{0})>0$ and
$P(\partial B_{0})=0$. This implies that  $\lim_{k\rightarrow\infty}%
P(B_{k}^{\prime})=P(B_{0})>0$, which is contradictory.

\subsection{Proof of \prop \ref{psicont}}
 The monotonous function $\Psi$ has a right limit at any $\varepsilon_{0}\geqslant0$ and a left limit at any $\varepsilon_{0}
>0$.\ Let $\varepsilon_{k}\downarrow\varepsilon_{0}>0$. For every $\theta>0$
there exists $B_{\theta,0}\in\mathcal{B}_{\varepsilon_{0}}$ such that $B_{\theta,0}=H(O,u_{\theta},y_{\theta},y_{\theta}+\varepsilon_{0})$ satisfies
\[
(1+\theta)\Psi(\varepsilon_{0})>P(B_{\theta,0})\geqslant\Psi(\varepsilon_{0}).
\]
Consider a decreasing sequence of sets $B_{\theta,k}=H(O,u_{\theta},y_{\theta},y_{\theta}+\varepsilon_{k})$ with limit $\textstyle\bigcap\nolimits_{k}B_{\theta,k}=B_{\theta,0}$ so that $P(B_{\theta,k})\downarrow P(B_{\theta,0})$. There exists $k_{\theta}>0$ such that for every $k\geqslant
k_{\theta}$%
\[
(1+\theta)\Psi(\varepsilon_{0})>P(B_{\theta,k})\geqslant P(B_{\theta
,0})\geqslant\Psi(\varepsilon_{0}).
\]
Since $\Psi$ is increasing we have $P(B_{\theta,k})\geqslant\Psi(\varepsilon_{k})\geqslant\Psi(\varepsilon_{0}).$ As $\Psi(\varepsilon_{k})$ converges to a right limit
$\Psi(\varepsilon_{0}^{+})$ at $\varepsilon_{0}$, we have for every $\theta>0$,%
\[
(1+\theta)\Psi(\varepsilon_{0})>\lim_{k\rightarrow\infty}\Psi(\varepsilon
_{k})=\Psi(\varepsilon_{0}^{+})\geqslant\Psi(\varepsilon_{0}).
\]
In other words $\lim_{k\rightarrow\infty}\Psi(\varepsilon_{k})=\Psi(\varepsilon_{0}^{+})=\Psi(\varepsilon_{0}).$ Likewise, if $\varepsilon_{k}\uparrow\varepsilon_{0}>0$ then to every $\theta>0$ we associate a sequence  $B_{\theta,k}\in\mathcal{B}_{\varepsilon_{k}}$ such that%
\[
(1+\theta)\Psi(\varepsilon_{k})>P(B_{\theta,k})>\Psi(\varepsilon_{k})
\]
and by compacity in $u$ and $y$ we can extract a stabilized sequence,
\[
B_{\theta,k_{n}}=H(O,u_{k_{n}},y_{k_{n}},y_{k_{n}}+\varepsilon_{k_{n}})
\]
with $u_{k_{n}}\rightarrow u_{\theta}$, $y_{k_{n}}\rightarrow y_{\theta}$. We set $B_{\theta,0}=H(O,u_{\theta},y_{\theta},y_{\theta}+\varepsilon_{0})$.
Under \condpm{-} and \condpm{+} by \prop \ref{surfaceregularity} we have $P(B_{\theta,k_{n}})\rightarrow
P(B_{\theta,0})$  hence for all $k\geqslant k_{\theta}$ it follows that
\begin{align*}
\lim_{n\rightarrow\infty}P(B_{\theta,k_n})  & \rightarrow\Psi(\varepsilon
_{0}^{-})\geqslant(1-\theta)P(B_{\theta,0})\geqslant(1-\theta)\Psi
(\varepsilon_{0})\\
\Psi(\varepsilon_{0})  & \geqslant\Psi(\varepsilon_{0}^{-})\geqslant
(1-\theta)\Psi(\varepsilon_{0})
\end{align*}
for every $\theta>0$. Therefore, $\Psi(\varepsilon_{0}^{-})=\Psi(\varepsilon_{0}).$

\subsection{Proof of \prop \ref{surestim}}

Under \condpm{-} we want to show that we almost surely have for all $O,u,\alpha$ and $n>d$ that
\[
\alpha\leqslant P_{n}(H(O,u,Y_{n}(O,u,\alpha)))=P_{n}(H_{n}(u,\alpha
)))\leqslant\alpha+\frac{d}{n}.
\]
By definition of $Y_{n}(O,u,\alpha)$ we have $P_{n}(H(O,u,Y_{n}(O,u,\alpha)))\geqslant\alpha$. Fix $n>d$. Under \condpm{-} the probability that $X_{1},...,X_{d+1}$ stand on the same hype-plan is null. As a matter of fact, by denoting $\partial H(x_{1},...,x_{d})$ the unique hyper-plan containing $d$ distinct points $x_{1},...,x_{d}$ we have
\begin{align*}
& \mathbb{P}\left(  X_{d+1}\in\partial H(X_{1},...,X_{d})\right)  \\
& =\int_{x_{1}\in\mathbb{R}^{d}}...\int_{x_{d}\in\mathbb{R}^{d}}\mathbb{P}\left(  X_{d+1}\in\partial H(x_{1},...,x_{d})\ \mid\ X_{1}=x_{1},...,X_{d}=x_{d}\right)  dP(x_{1})...dP(x_{d})\\
& =\int_{x_{1}\in\mathbb{R}^{d}}...\int_{x_{d}\in\mathbb{R}^{d}}\mathbb{P}\left(  X_{d+1}\in\partial H(x_{1},...,x_{d})\right)  dP(x_{1})...dP(x_{d}) =0
\end{align*}
since $\mathbb{P}\left(  X_{d+1}\in\partial H\right)  =0$ for all hyper-plan $\partial H$, by \condpm{-}. It follows that
\begin{align*}
& \mathbb{P}\left( \left\{  X_{i_{d+1}}\in\partial H(X_{i_{1}},...,X_{i_{d}}),\text{ for distinct }  i_{1},...,i_{d+1}\right\}  \right)  \\
& \leqslant%
{\displaystyle\sum\limits_{0\leqslant i_{1},...,i_{d+1}\leqslant n}}
\mathbb{P}\left(  X_{i_{d+1}}\in\partial H(X_{i_{1}},...,X_{i_{d}})\right)=0
\end{align*}
Therefore, almost surely, no hyper-plan contains more than $d$ sample points,
\[
\mathbb{P}\left(  \sup_{H\in\mathcal{H}}P_{n}(\partial H)\geqslant\frac
{d+1}{n}\right)  =0.
\]
By denoting $ \interi{H(O,u,y)} = \left\{ x \in \R^d\,:\,\sca{x-O}{u}< y \right\}$ 
we have, with probability one, for all $u,\alpha$ 
\begin{align*}
P_{n}(H_{n}(u,\alpha))  & =P_{n}(\interi{H_{n}(u,\alpha)})+P_{n}(\partial H_{n}(u,\alpha))\\
& \leqslant P_{n}(\interi{H_{n}(u,\alpha)})+\frac{d}{n}.
\end{align*}
We also have $P_{n}(\interi{H_{n}(u,\alpha)})\leqslant\alpha$ because if $P_{n}(\interi{H_{n}(u,\alpha)})>\alpha$ then there is at least $\left\lceil n\alpha\right\rceil $ points $X_{i}\in\interi{H_{n}(u,\alpha)}$ hence we have $\left\langle X_{i},u\right\rangle
<Y_{n}(O,u,\alpha)$ and by denoting
\[
\overset{\circ}{Y}_{n}(O,u,\alpha)=\max_{X_{i}\in\interi{H%
_{n}(u,\alpha)}}(\left\langle X_{i},u\right\rangle )<Y_{n}(O,u,\alpha)
\]
it follows that
\[
P_{n}(H(O,u,\overset{\circ}{Y}_{n}(O,u,\alpha)))\geqslant\frac{\left\lceil
n\alpha\right\rceil }{n}\geqslant\alpha
\]
which contradicts the definition
\[
Y_{n}(O,u,\alpha)=\inf\left\{  y\in\mathbb{R}:P_{n}(H(O,u,y))\geqslant
\alpha\right\}  \leqslant\overset{\circ}{Y}_{n}(O,u,\alpha).
\]


\subsection{Proof of uniform consistency with rate}

\paragraph{Proof of \thm \ref{LGNU}.}
Under \condpm{-} and \condpsi{+} suppose that there exists $\delta >0$ and an increasing random sequence $n_k \to \infty$ such that
$$\abs{ Y_{n_k}(O,u_{n_k},\alpha_{n_k}) - Y(O,u_{n_k},\alpha_{n_k})}>\delta.$$
Let $(u_{n'_k}, \alpha_{n'_k})$ be a subsequence on $\Sd\times[\alpha^-,\alpha^+] \subset \Sd \times (1/2,1)$ with $u_{n'_k} \neq u_0$ and $\alpha_{n'_k} \neq \alpha_0$ and  $u_{n'_k}\to u_0 $ and $\alpha_{n'_k} \to \alpha_0.$
It is possible to extract from $(n_k')$ an increasing sequence $(m_k)$  with $m_k \to \infty$ such that either
\begin{equation}\label{AABCD}
Y(O,u_{m_k},\alpha_{m_k}) - Y_{m_k}(O,u_{m_k},\alpha_{m_k})\geq \delta
\end{equation}
or $Y_{m_k}(O,u_{m_k},\alpha_{m_k}) - Y(O,u_{m_k},\alpha_{m_k}) \geq \delta.$
We assume (\ref{AABCD}) and we set
$$A_k = H_{m_k}(O,u_{m_k},\alpha_{m_k}) ,\quad C_k = H(O,u_{m_k},\alpha_{m_k}) ,\quad B_k =C_k \setminus A_k.$$
Since $\mathcal{H}$ is a VC-class we have 
$$\lim_{n \to \infty}\sup_{H \in \mathcal{H}}\abs{P_n(H)-P(H)} = 0,\ \ a.s.$$
Under \condpm{-}, \prop \ref{surestim} implies
\begin{eqnarray*}
\sup_{H \in \mathcal{H}}\abs{P_{m_k}(H)-P(H)} &\geq&  P_{m_k}(A_k)-P(A_k) \\
&\geq& \alpha_{m_k} - \frac{d}{m_k} - (\alpha_{m_k} - P(B_k)) \\
&\geq& -\frac{d}{m_k} + \Psi(\delta)
\end{eqnarray*}
so that we have, 
\begin{equation}
\Psi(\delta) \leq\sup_{H \in \mathcal{H}}\abs{P_{m_k}(H)-P(H)}+ \frac{d}{m_k}.
\end{equation}
Therefore there exists $\delta >0$ such that $ \Psi(\delta) = 0$ which contradict \condpsi{+}. In the alternative case of (\ref{AABCD}), a similar arguments holds. 
\paragraph{Proof of \thm \ref{LLNU}}Under \condpm{-} and \condpsi{+}  suppose that there exists a random increasing sequence $n_k \to \infty$ such that
$$\abs{ Y_{n_k}(O,u_{n_k},\alpha_{n_k}) - Y(O,u_{n_k},\alpha_{n_k})}>\delta_{n_k} = \Psi^{-1}\left(\sqrt{\frac{\log \log n_k}{n_k}}\right).$$
Let $(u_{n'_k}, \alpha_{n'_k})$ be a sequence of $\Sd\times[\alpha^-,\alpha^+] \subset \Sd \times (0,1)$ with $u_{n'_k} \neq u_0$ and $\alpha_{n'_k} \neq \alpha_0$ and $ u_{n'_k}\to u_0$ and $\alpha_{n'_k} \to  \alpha_0$. There exists an increasing sequence $(m_k)_{k\geq 1}$ such that  $m_k \to \infty$ and $Y(O,u_{m_k},\alpha_{m_k}) - Y_{m_k}(O,u_{m_k},\alpha_{m_k})\geq \delta_{m_k}$.  We set $$A_k = H_{m_k}(O,u_{m_k},\alpha_{m_k}) ,\quad C_k = H(O,u_{m_k},\alpha_{m_k}), \quad B_k =C_k \setminus A_k.$$
%
Since $\mathcal{H}$ is a VC-class, by the law of the iterated logarithm of Alexander \cite{Alex84} we know that
$$\limsup_{n \to \infty}\frac{\norme{P_n-P}_{\mathcal{H}}}{\sqrt{(\log\log n)/n}}\leq \frac{\sqrt{2}}{2} \ \ a.s.$$
since $4/5 > \sqrt{2}/2 $, there exists $k(\omega) >0 $ such that for all $k \geq k(\omega)$
$$ \frac{4}{5}\sqrt{\frac{\log \log m_k}{m_k}} \geq \sup_{H \in \mathcal{H}}\abs{P_{m_k}(H)-P(H)}.$$
Under \condpm{-} by \props \ref{psicont} and \ref{surestim} we have

\begin{eqnarray*}
\sup_{H \in \mathcal{H}}\abs{P_{m_k}(H)-P(H)} &\geq&  P_{m_k}(A_k)-P(A_k) \\
&\geq& \alpha_{m_k} - \frac{d}{m_k} - (\alpha_{m_k} - P(B_k)) \\
&\geq& -\frac{d}{m_k} + \Psi(\delta_n)\\
&\geq& \sqrt{\frac{\log \log m_k}{m_k}} -\frac{d}{m_k}
\end{eqnarray*}
hence
\begin{equation}
\frac{4}{5}\sqrt{\frac{\log \log m_k}{m_k}} \geq \sup_{H \in \mathcal{H}}\abs{P_{m_k}(H)-P(H)} \geq \sqrt{\frac{\log \log m_k}{m_k}} -\frac{d}{m_k}.
\end{equation}
This implies that $1 \geq \frac{1}{5d}\sqrt{m_k\log \log m_k} $  which is absurd, so we have
$$\limsup_{n \to \infty} \frac{\norme{Y_n-Y}_{\Sd\times\Delta}}{\Psi^{-1}\left(\sqrt{(\log \log n)/n}\right)} < \infty \quad a.s.$$
the case when $Y_{m_k}(O,u_{m_k},\alpha_{m_k}) - Y(O,u_{m_k},\alpha_{m_k})\geq \delta_{m_k}$ is identical.

\subsection{Preliminary to the proofs of the main theorem}

Let us write the empirical process indexed in different ways, as follows.  For $O\in \R^d$, $u \in \Sd$, $y\in \R$, $\alpha \in \Delta$ and $H \in \cH$,
\begin{itemize}
\item[ ] $\alpha_n(O,u,y) = \sqrt{n}\left(P_n(H(O,u,y))-P(H(O,u,y))\right),$
\item[ ] $\E_n(u,\alpha) = \sqrt{n}\left(P_n(H(u,\alpha))-P(H(u,\alpha))\right),$
\item[ ] $\Lambda_n(H) =   \sqrt{n}(P_n(H)-P(H)),$
\end{itemize}
and the quantile process
\begin{itemize}
\item[ ] $\D_n(u,\alpha) = \sqrt{n}\left(Y_n(O,u,\alpha)-Y(O,u,\alpha)\right).$
\end{itemize}
Thus, for $O\in \R^d$, $u \in \Sd$ and $\alpha \in \Delta$, we have
\begin{equation}\label{Theremproc}
\alpha_n(O,u,Y(O,u,\alpha)) = \E_n(u,\alpha) = \Lambda_n(H(u,\alpha))
\end{equation}
and the increments
\begin{eqnarray*}
\Lambda_n(H(O,u,y,y+\eps)) &=& \sqrt{n}\left( P_n(H(O,u,y,y+\eps)) - P(H(O,u,y,y+\eps))\right) \\
&=& \Lambda_n(H(O,u,y+\eps)) - \Lambda_n(H(O,u,y)).
\end{eqnarray*}
For $n\geq 3$, $C>1$,  denote $\eps_n = C\sqrt{\dfrac{\log \log n}{n}}$ and 
$$\cB_n = \bigcup_{0<\eps<\eps_n} \cB_{\eps}, \quad \cF_n = \{\1_{B}: \ B \in \cB_n\}.$$
The next proposition is crucial for the upcoming proofs. It's about the sharp control of the modulus of  continuity of the empirical process $\Lambda_n$ for the bands of width smaller than $\eps_n$.
\begin{proposition}\label{lemmeAlphaN} Under \cond{1}, for all $\zeta>1$ there exists $C_0,C_1 >0$,  then for all $n \geq 3$ we have
$$\P\left\{ \norme{\Lambda_n}_{\cB_{n}}\geq C_0\frac{(\log n)^{1/2} (\log \log n)^{1/4}}{n^{1/4}} \right\} \leq \frac{C_1}{n^{\zeta}}.$$
\end{proposition}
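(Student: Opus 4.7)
The plan is to treat this as a sharp modulus-of-continuity estimate for the empirical process $\Lambda_n$ restricted to a \emph{localized} VC class, namely the family $\cF_n = \{\1_B : B \in \cB_n\}$ of indicators of bands of width at most $\eps_n = C\sqrt{\log\log n/n}$, and then to deduce the tail bound by combining Talagrand's concentration inequality with a VC-entropy bound for the expected supremum.

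First I would check that $\cF_n$ sits inside a fixed VC-subgraph class of dimension $V=V(d)$: each $B \in \cB_n$ is the set-theoretic difference of two parallel affine half-spaces, and the class of half-spaces in $\R^d$ has VC dimension $d+1$, so the class of such differences has VC dimension at most some $V(d)$, and its uniform $L^2(Q)$ covering numbers obey $N(\eps,\cF_n,L^2(Q)) \leq (K/\eps)^{2V}$ for some universal $K$ and every probability measure $Q$, independently of $n$. Second I would use $\cond{1}$, and more precisely its consequence $P(H(O,u,y,z)) \leq M(z-y)$ stated just after $\cond{1}$, to control the weak variance
\[
\sigma_n^2 \;:=\; \sup_{f \in \cF_n} \mathrm{Var}_P(f) \;\leq\; M\eps_n \;=\; MC\sqrt{\tfrac{\log\log n}{n}},
\]
so that $\sigma_n \sim n^{-1/4}(\log\log n)^{1/4}$ and $\log(1/\sigma_n) \sim (\log n)/4$. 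Third, invoking the standard Gin\'e--Guillou / Koltchinskii entropy bound for a bounded VC class with small variance yields
\[
\E\norme{\Lambda_n}_{\cF_n} \;\leq\; K_1\,\sigma_n\sqrt{V\log(1/\sigma_n)} \;\lesssim\; \frac{(\log n)^{1/2}(\log\log n)^{1/4}}{n^{1/4}}.
\]

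To upgrade this expectation bound to a tail bound I would apply Talagrand's inequality for empirical processes of uniformly bounded functions: there are universal $K_2, K_3$ such that for every $t>0$,
\[
\P\Bigl( \norme{\Lambda_n}_{\cF_n} \geq K_2\,\E\norme{\Lambda_n}_{\cF_n} + K_3\bigl(\sigma_n\sqrt{t} + t/\sqrt{n}\bigr)\Bigr) \leq e^{-t}.
\]
Choosing $t = \zeta \log n$ the three right-hand terms are all $O\bigl((\log n)^{1/2}(\log\log n)^{1/4}/n^{1/4}\bigr)$ (with the deviation term $t/\sqrt n$ being of smaller order $\log n/\sqrt n$), so they can be absorbed into a single $C_0\,(\log n)^{1/2}(\log\log n)^{1/4}/n^{1/4}$ for a constant $C_0 = C_0(m,M,d,\zeta,C)$, giving the stated bound with $C_1=1$.

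The main obstacle is book-keeping rather than conceptual: one must line up precisely how the variance $\sigma_n^2 \asymp \eps_n$, the VC dimension $V$, and the truncation level $\eps_n$ enter each of the three ingredients, so that the $\log n$ arising from $\log(1/\sigma_n)$ exactly matches the $\log n$ delivered by the choice $t=\zeta\log n$, with no spurious factor of $\log n$ or $\log\log n$ creeping in. A subsidiary technical point is to check that the uniform covering bound on $\cF_n$ is valid with constants \emph{independent of $n$}, which reduces to the observation that $\cB_n \subset \cB_{\eps^+}$ always lies in the same fixed VC class of differences of parallel half-spaces in $\R^d$.
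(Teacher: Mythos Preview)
Your proposal is correct and follows essentially the same route as the paper: Talagrand's concentration inequality applied to the localized VC class $\cF_n$, the variance bound $\sigma_n^2 \leq M\eps_n$ from \cond{1}, and a VC-entropy expectation bound (the paper cites Einmahl--Mason/Gin\'e where you cite Gin\'e--Guillou/Koltchinskii, but these give the same $\sigma_n\sqrt{\log(1/\sigma_n)}$ conclusion). The only cosmetic difference is that the paper uses the two-exponential form of Talagrand with an explicit $t_n$, whereas you use the $\sigma_n\sqrt t + t/\sqrt n$ form and set $t=\zeta\log n$; both lead to the same tail bound.
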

\begin{proof}
Let $n \geq 3$. By \remk \ref{remrem1} from the appendix, the class $\cF_n$ satisfies \textbf{(F.i)} and \textbf{(F.ii)}, thus by applying the Talagrand inequality \cite{Tal94} there exists $A_0,A_1>0$ such that 
\begin{eqnarray*}
&&\P\left\{\norme{\Lambda_n}_{\cB_{n}}\geq A_0\left(\E\left(\sup_{B \in \cB_n}\left|\frac{1}{\sqrt{n}}\sum_{i=1}^{n}\tau_i\1_{X_i \in B}\right|\right) +  t_n\right) \right\} \\
&& \ \ \ \ \ \ \ \ \ \leq 2 \exp\left(-\frac{A_1 t_n^2}{\sigma_n^2}\right) + 2 \exp\left(-A_1 t_n \sqrt{n}\right)
\end{eqnarray*}
with \begin{equation}\label{tn}
\dsp\sigma_n^2 = \sup_{B \in \cB_{n}} Var(\1_{X \in B}) , \quad \dsp t_n = \sqrt{\frac{CM\zeta}{A_1}}\frac{(\log n)^{1/2} (\log \log n)^{1/4}}{n^{1/4}}.\end{equation}
~\\
By \cond{1} we have
\begin{equation*}
 Var(\1_{X \in B}) = P(B)\left(1-P(B)\right) \leq M \eps_n(1-m\eps_n) \leq  M \eps_n.
\end{equation*}
Thus, 
\begin{equation*}
\exp\left(-\frac{A_1 t_n^2}{\sigma_n^2}\right) \leq \exp\left(-\frac{A_1 t_n^2}{M \eps_n}\right)\\
= \frac{1}{n^{\zeta}}.
\end{equation*}
Moreover, for $n\geq 3$ we have
\begin{equation*}
\exp\left(-A_1 t_n \sqrt{n}\right) 
=  \exp\left(-\sqrt{MCA_1\zeta} (\log n)^{1/2} (n\log \log n)^{1/4}\right) \leq \frac{C_1'}{n^{\zeta}}
\end{equation*}
By \remk \ref{remrem2} the class $\cF_n$ obeys the conditions of \thm \ref{moments} thus there exists $A_2 >0$ such that for all $n \geq n_0$ we have
\begin{eqnarray*}
\E\left(\sup _{B \in \cB_{n}}\left|\frac{1}{\sqrt{n}}\sum_{i=1}^{n}\tau_i\1_{X_i \in B}\right|\right) &\leq& A_2 \sqrt{v M \eps_n \log\left(1 \vee1/\sqrt{M \eps_n}\right)} \\
&\leq& C_0' \frac{(\log n)^{1/2} (\log \log n)^{1/4}}{n^{1/4}}
\end{eqnarray*}
hence the result is proved.
\end{proof}

\subsection{Proof of the main theorem}
\paragraph{\bf Preliminary step}
For $O \in \R^d$, $u \in \Sd$ and $\alpha \in \Delta$, $\gamma >0$
$$y_{\alpha} = Y(O,u,\alpha) \text{ \ \ and \ \ } v_{\gamma}(y_{\alpha}) = \left\{y \in \R, \ \abs{y-y_{\alpha}}< \gamma\right\}.$$
We know that $\lim_{n\to \infty}\norme{Y_n(O,\,u,\,\alpha)-y_{\alpha}}_{\Sd\times\Delta}= 0 \ \ a.s.$
then there exists $\gamma_0>0$ such that for all $0<\gamma<\gamma_0$ we have for $n\geq n(\omega,\gamma)$
\begin{eqnarray}
&&Y_n(O,\,u,\,\alpha) \nonumber \\
&&=\inf_{y \in  v_{\gamma}(y_{\alpha})}\left\{P_n\left(H(O,u,y)\right) \geq \alpha \right\} \nonumber \\
&&=\inf_{y \in  v_{\gamma}(y_{\alpha})}\left\{ P_n\left(H(O,u,y)\right) - P\left(H(O,u,y)\right)  \geq \alpha - P\left(H(O,u,y)\right)  \right\} \nonumber \\
&&=\inf_{y \in  v_{\gamma}(y_{\alpha})}\left\{P_n\left(H(O,u,y)\right) - P\left(H(O,u,y)\right)  \geq F_{\sca{X-O}{u}}(y_{\alpha}) -  F_{\sca{X-O}{u}}(y)  \right\} \nonumber \\
&&=\inf_{y \in  v_{\gamma}(y_{\alpha})}\left\{\alpha_n(O,u,y)  \geq \sqrt{n}\left(F_{\sca{X-O}{u}}(y_{\alpha}) -  F_{\sca{X-O}{u}}(y)\right)  \right\}\nonumber
\end{eqnarray}
with $ \alpha_n(O,u,y) =   \sqrt{n}\left( P_n\left(H(O,u,y)\right) - P\left(H(O,u,y)\right)\right).$
Under \cond{1} and \cond{2}, $ y \mapsto F_{\sca{X-O}{u}}(y)$ is continuous and differentiable on $\R$ thus by Taylor expansion to the first order in the neighborhood of $y_{\alpha}$, we have for all  $y \in  v_{\gamma}(y_{\alpha})$
$$ F_{\sca{X-O}{u}}(y_{\alpha}) -  F_{\sca{X-O}{u}}(y)  = f_{\sca{X-O}{u}}(y_{\alpha})(y_{\alpha} - y) + \eps_{\gamma}(u,\alpha,y_{\alpha} - y)$$
with 
$$\dsp\lim_{\gamma \to 0}\sup_{u \in \Sd}\sup_{\alpha \in \Delta}\abs{\eps_{\gamma}(u,\alpha,y_{\alpha} - y)} = 0.$$
From now on, we study the following, 
\begin{eqnarray*}
&&Y_n(O,\,u,\,\alpha) \\
&&= \inf_{y \in  v_{\gamma}(y_{\alpha})}\left\{\alpha_n(O,u,y)  \geq  \sqrt{n}\left(f_{\sca{X-O}{u}}(y_{\alpha})(y_{\alpha} - y) + \eps_{\gamma}(u,\alpha,y_{\alpha} - y)\right)  \right\}.
\end{eqnarray*}

\begin{center}
\includegraphics[keepaspectratio=true, scale = 0.6]{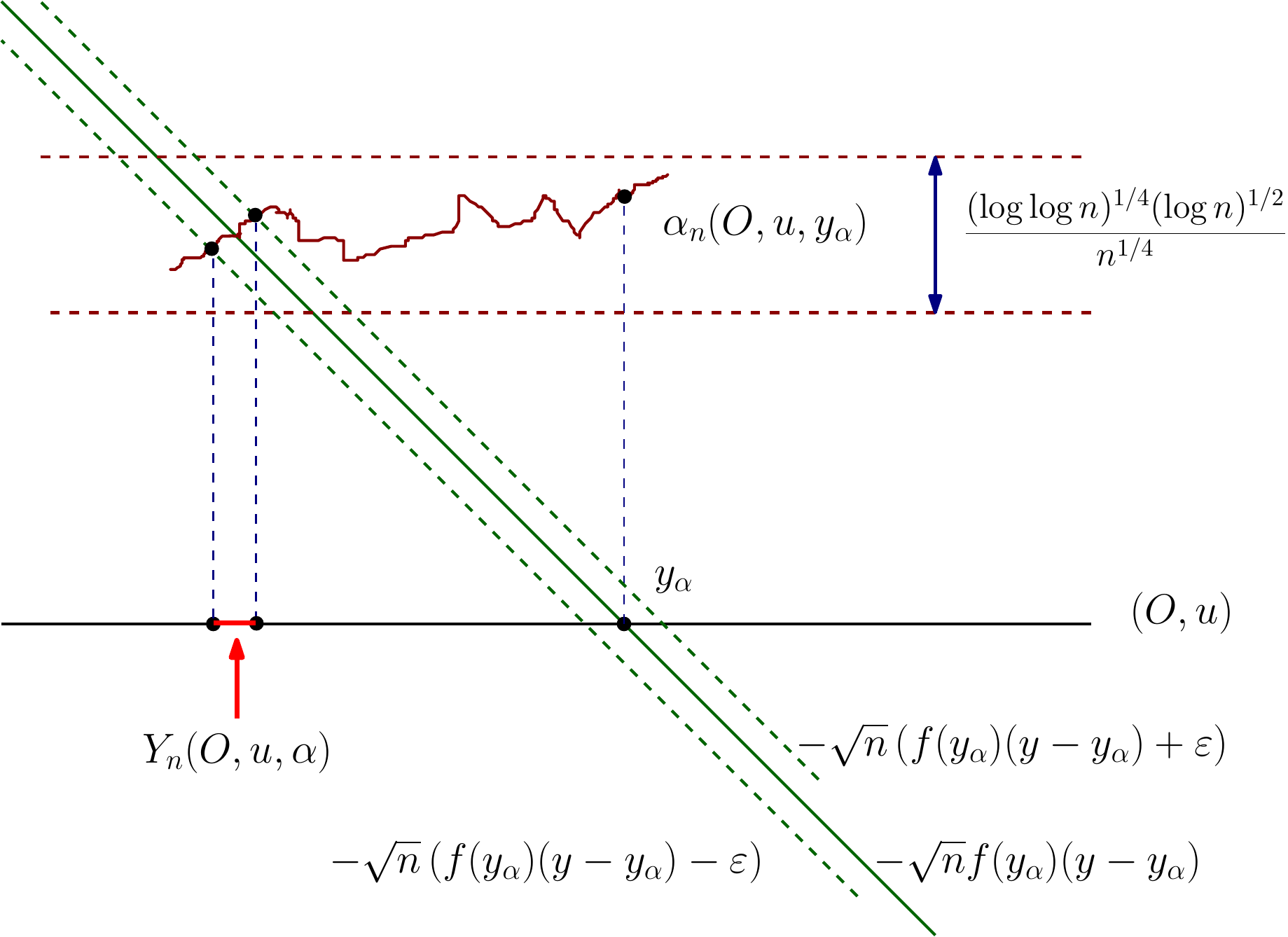}
\label{SAP2}
\end{center}


\paragraph{\bf Step I} Under  \cond{1} and \cond{2}, we show that
\begin{equation}\label{step1proof}
\lim_{n \rightarrow \infty} \norme{\sqrt{n}(Y_n-Y) + \frac{\E_n}{h}}_{\Sd \times \Delta} = 0 \ \ \ a.s. 
\end{equation}
By \lemme \ref{yny} there exists $C_{\Delta}>0$ and $n(\omega) > 0$ such that for all $n \geq n(\omega)$, we have for all $O \in \R^d$, $u \in \Sd$ and $\alpha \in \Delta$, $Y_n(O,u,\alpha) \in  v_{\gamma_n}(y_{\alpha})$
where
$$v_{\gamma_n}(y_{\alpha})=\left[y_{\alpha} - \gamma_n,y_{\alpha} + \gamma_n\right], \ \gamma_n = C_{\Delta}\sqrt{\frac{\log \log n}{n}}.$$
For all $y \in v_{\gamma_n}(y_{\alpha})$, denote
$$ z_n(O,u,y,y_{\alpha}) =\alpha_n(O,u,y_{\alpha})- \alpha_n(O,u,y)= \Lambda_n(H(O,u,y_{\alpha},y)) $$
the increments of the empirical process $\Lambda_n$ on the bands of width less than $\gamma_n$. By \prop \ref{lemmeAlphaN},
\begin{equation}\label{znOps}
\dsp\sup_{u\in\S^d}\sup_{\alpha \in \Delta}\sup_{y \in v_{\gamma_n}(y_{\alpha})}|z_n(O,u,y,y_{\alpha})|=O_{a.s.}\left(\frac{(\log n)^{1/2} (\log \log n)^{1/4}}{n^{1/4}}\right)
\end{equation}
hence
\begin{equation}\label{zn0}
\dsp\lim_{n\to \infty}\sup_{u\in\S^d}\sup_{\alpha \in \Delta}\sup_{y \in v_{\gamma_n}(y_{\alpha})}|z_n(O,u,y,y_{\alpha})|=0  \ \ \ a.s.
\end{equation}
and for all $n\geq n(\omega)$, we get
\begin{align*}
&Y_n(O,\,u,\,\alpha) \\
=& \inf_{y \in  v_{\gamma_n}(y_{\alpha})}\left\{\alpha_n(O,u,y)  \geq  \sqrt{n}\left(f_{\sca{X-O}{u}}(y_{\alpha})(y_{\alpha} - y) + \eps_{\gamma_n}(u,\alpha,y_{\alpha} - y)\right)  \right\}\\
=&  \inf_{y \in  v_{\gamma_n}(y_{\alpha})}\left\{ y \geq y_{\alpha} - \frac{\alpha_n(O,u,y) }{n^{1/2}f_{\sca{X-O}{u}}(y_{\alpha})} + \frac{\eps_{\gamma_n}(u,\alpha,y_{\alpha} - y)}{f_{\sca{X-O}{u}}(y_{\alpha})}  \right\}\\
=& \inf_{y \in  v_{\gamma_n}(y_{\alpha})}\left\{ y \geq y_{\alpha} - \frac{\alpha_n(O,u,y_{\alpha})}{n^{1/2}f_{\sca{X-O}{u}}(y_{\alpha})} -\frac{z_n(O,u,y,y_{\alpha}) }{n^{1/2}f_{\sca{X-O}{u}}(y_{\alpha})}+ \frac{\eps_{\gamma_n}(u,\alpha,y_{\alpha} - y)}{f_{\sca{X-O}{u}}(y_{\alpha})}  \right\}.
\end{align*}
Since $|Y_n(O,u,\alpha)|<\infty$, we have on the one hand,
\begin{eqnarray*}
Y_n(O,u,\alpha) &\geq& y_{\alpha} - \frac{\alpha_n(O,u,y_{\alpha})}{n^{1/2}f_{\sca{X-O}{u}}(y_{\alpha})}\\
&& \ \ +\inf_{u\in\S^d}\inf_{\alpha \in \Delta}\inf_{y \in v_{\gamma_n}(y_{\alpha})}\left(-\frac{z_n(O,u,y,y_{\alpha}) }{n^{1/2}f_{\sca{X-O}{u}}(y_{\alpha})}+ \frac{\eps_{\gamma}(u,\alpha,y_{\alpha} - y)}{f_{\sca{X-O}{u}}(y_{\alpha})}\right)\\
&\geq& y_{\alpha} - \frac{\alpha_n(O,u,y_{\alpha})}{n^{1/2}f_{\sca{X-O}{u}}(y_{\alpha})}-\frac{\Theta_{1,n}}{n^{1/2}}-\Theta_{2,n}\\
\end{eqnarray*}
where 
\begin{eqnarray}
\Theta_{1,n} &=& \sup_{u\in\S^d}\sup_{\alpha \in \Delta}\sup_{y \in v_{\gamma_n}(y_{\alpha})}\left|\frac{z_n(O,u,y,y_{\alpha}) }{f_{\sca{X-O}{u}}(y_{\alpha})}\right| \\
\Theta_{2,n} &=& \sup_{u\in\S^d}\sup_{\alpha \in \Delta}\sup_{y \in v_{\gamma_n}(y_{\alpha})}\left|\frac{\eps_{\gamma_n}(u,\alpha,y_{\alpha} - y)}{f_{\sca{X-O}{u}}(y_{\alpha})}\right|.
\end{eqnarray}
Likewise, we have
\begin{eqnarray*}
&&Y_n(O,u,\alpha)\leq y_{\alpha} - \frac{\alpha_n(O,u,y_{\alpha})}{n^{1/2}f_{\sca{X-O}{u}}(y_{\alpha})}+\frac{\Theta_{1,n}}{n^{1/2}} +\Theta_{2,n}
\end{eqnarray*}
thus
\begin{equation}\label{ineq101}
\left|n^{1/2}(Y_n(O,u,\alpha)-y_{\alpha}) + \frac{\alpha_n(O,u,y_{\alpha})}{f_{\sca{X-O}{u}}(y_{\alpha})}\right| \leq \Theta_{1,n} +n^{1/2}\Theta_{2,n}.
\end{equation}
By \cond{1}, we have
$$\Theta_{1,n} \leq \frac{1}{m} \sup_{u\in\S^d}\sup_{\alpha \in \Delta}\sup_{y \in v_{\gamma_n}(y_{\alpha})}\left|z_n(O,u,y,y_{\alpha})\right|.$$
Hence by (\ref{zn0}), we have $\lim_{n \to \infty}\Theta_{1,n} = 0 \ a.s.$ Observe that $\rho$ of (\ref{rhogammaeps}) can be written as
$$\rho(\gamma) = \sup_{u\in\S^d}\sup_{\alpha \in \Delta}\sup_{y \in v_{\gamma}(y_{\alpha})}\left|\eps_{\gamma}(u,\alpha,y_{\alpha} - y)\right|.$$
Since $f_{\sca{X-O}{u}}(y_{\alpha}) = h(u,\alpha)$, under \cond{1} we obtain
\begin{equation}\label{eqntheta2n}
n^{1/2}\Theta_{2,n} \leq \frac{n^{1/2}\rho(\gamma_n)}{m}.
\end{equation}
By \cond{2}, we have
$$\lim_{n \to \infty} \frac{\sqrt{\log\log(1/\gamma_n)}}{\gamma_n}\rho(\gamma_n) = \lim_{n \to \infty}n^{1/2}\rho(\gamma_n) = 0 $$
hence $\lim_{n\to \infty}n^{1/2}\Theta_{2,n}= 0 $
thus,
$$\lim_{n \to \infty}\left|n^{1/2}(Y_n(O,u,\alpha)-y_{\alpha}) + \frac{\alpha_n(O,u,y_{\alpha})}{f_{\sca{X-O}{u}}(y_{\alpha})}\right| = 0 \ \ \ a.s. $$
and with previous notation
$$\frac{\alpha_n(O,u,y_{\alpha})}{f_{\sca{X-O}{u}}} = \frac{\E_n(u,\alpha)}{h(u,\alpha)}$$
then (\ref{step1proof}) holds.
\paragraph{\bf Step II} Under \cond{1} and \cond{2} we show that we can construct on the same probability space $\left(  \Omega,\mathcal{T},\mathbb{P}\right)$ and  i.i.d. sequence $(X_{n})$ of law $P$ and a sequence $(\mathbb{G}_{n})$ of versions of  $\mathbb{G}_P$ such that
\begin{equation}\label{step2proof}
\lim_{n \rightarrow \infty} \norme{ \E_n - \G_n}_{\Sd \times \Delta} = 0 \ \ \ a.s.
\end{equation}
The set $\cH$ is a class of Vapnik-Chervonenkis, thus it is a Donsker class, 
$$\left(\Lambda_n(H)\right)_{H \in \cH} \underset{n \rightarrow \infty}{\overset{\mathcal{L}aw}{\longrightarrow}}  \left(\G_P(H)\right)_{H \in \cH} $$
with $\G_P$ a Brownian bridge indexed by  $\cH$ of covariance
$$cov(\G_P(H),\G_P(H')) = P(H \cap H') - P(H)P(H'), \quad H, H' \in \cH.$$
Then, by applying \thm \ref{BMBMBM} to $\Lambda_n$, we can construct on the same probability space $\left(  \Omega,\mathcal{T},\mathbb{P}\right)$ and  i.i.d. sequence $(X_{n})$ of law $P$ and a sequence $(\mathbb{G}_{n})$ of versions of  $\mathbb{G}_P$ such that
\begin{equation}\label{SAP1}
\Lambda_n(H(O,u,y)) = \G_n(H(O,u,y)) + \xi_n(H(O,u,y))
\end{equation}
with
\begin{equation}\label{SAP11}
\lim_{n \to \infty}  \norme{\xi_n(H)}_{\cH} = 0 \ \ a.s.
\end{equation}
and  for all $\theta >1$ there exists $K_1 >0$ 
\begin{equation}\label{SAPs}
\P\left( \sup_{u \in\Sd}\sup_{\alpha \in \Delta}|\xi_n(u,\alpha)| \geq K_1 \frac{(\log n)^{w_d}}{n^{v_d}}\right) \leq \frac{1}{n^\theta}
\end{equation}
with the notation $\xi_n(u,\alpha) = \xi_n(H(u,\alpha))$ and $v_d = 1/(2+10d)$,  $w_d=(4+10d)/(4+20d)$. Consequently
(\ref{step2proof}) holds.
\paragraph{\bf Step III} Under  \cond{1} and \cond{2}, we show that
$$\lim_{n \rightarrow \infty} \norme{ \D_n + \frac{\G_n}{h}}_{\Sd \times \Delta} = 0 \ \ \ a.s. $$
By \textbf{Step I} we have
$$\lim_{n \rightarrow \infty} \norme{\sqrt{n}(Y_n-Y) + \frac{\E_n}{h}}_{\Sd \times \Delta} = 0 \ \ \ a.s. $$
and by \textbf{Step II} 
$$\lim_{n \rightarrow \infty} \norme{\E_n -\G_n}_{\Sd \times \Delta} = 0 \ \ \ a.s. $$
By \cond{1} the function $h$ is bounded thus under \cond{1} and  \cond{2}, we have
$$\lim_{n \rightarrow \infty} \norme{ \D_n + \frac{\G_n}{h}}_{\Sd \times \Delta} = 0 \ \ \ a.s. $$
which readily implies 
\begin{eqnarray*}
\lim_{n\rightarrow\infty} d_{PL}(\sqrt{n}(Y_{n}-Y), -\frac{\mathbb{G}_{n}}{h}) &=&\lim_{n\rightarrow\infty} d_{PL}(\sqrt{n}(Y_{n}-Y), -\frac{\mathbb{G}_P}{h})\\
 &=& \lim_{n\rightarrow\infty} d_{PL}(\sqrt{n}(Y_{n}-Y), \frac{\mathbb{G}_P}{h}) =  0
\end{eqnarray*}
where $d_{PL}$ is Prokhorov-Levy distance. Therefore
$$\D_n=\sqrt{n}(Y_{n}-Y) \underset{n \rightarrow \infty}{\overset{\mathcal{L}aw}{\longrightarrow}} \tilde{\G}: = \dfrac{\G_P}{h}$$in the sense of the weak convergence on the space of bounded function on $\Sd \times \Delta $ endowed with the supremum norm. Note that
$$cov(\tilde{\G}(u,\alpha),\tilde{\G}(u',\alpha')) = \frac{P(H(u,\alpha)\cap H(u',\alpha'))-\alpha\alpha'}{h(u,\alpha)h(u',\alpha')}.$$
\paragraph{\bf Step IV}\textit{(Rate in Bahadur-Kiefer representation).}
We show that under \cond{1}, \cond{2}, \cond{3}, it holds
\begin{equation}\label{BKK1}
\left\|\D_n+\frac{\E_n}{h}\right\|_{\Sd\times\Delta} = O_{a.s.}\left(\frac{(\log\log n)^{1/4}(\log n)^{1/2}}{n^{1/4}}\right).
\end{equation}
The class $\cH$ is a Vapnik-Cervonenkis class of dimension $d+1$, so by the law of the iterated logarithm (Alexander 1984 \cite{Alex84}) we have
$$\limsup_{n \to \infty}\frac{\norme{\Lambda_n}_{\cH}}{\sqrt{2\log\log n}}\leq \frac{1}{2}\ \ a.s.$$
then with probability $1$, there exists $n(\omega) > 0$ such that for all $n \geq n(\omega)$, we have for all $u \in \Sd$
$$\alpha_n(O,u,y):=\Lambda_n(H(O,u,y)) \in \left[-\sqrt{\log\log n},\sqrt{\log\log n}\right].$$
For all  $n \geq n(\omega)$, recall (\ref{ineq101})
\begin{equation}
\left|n^{1/2}(Y_n(O,u,\alpha)-y_{\alpha}) + \frac{\alpha_n(O,u,y_{\alpha})}{f_{\sca{X-O}{u}}(y_{\alpha})}\right| \leq \Theta_{1,n} +n^{1/2}\Theta_{2,n}
\end{equation}
and by (\ref{znOps}) and  \cond{1} and \cond{2} there exists $C'>0$, such that for all $n\geq n(\omega)$
$$\Theta_{1,n}\leq \frac{C'}{m}\left(\frac{(\log n)^{1/2} (\log \log n)^{1/4}}{n^{1/4}}\right)$$
and by (\ref{eqntheta2n})
$$n^{1/2}\Theta_{2,n}\leq \frac{n^{1/2}\rho(\gamma_n)}{m}.$$
thus for $n \geq n(\omega)$,
\begin{eqnarray}
\left|\D_n(u,\alpha)+\frac{\E_n(u,\alpha)}{h(u,\alpha)}\right|&\leq& \Theta_{1,n} + n^{1/2}\Theta_{2,n} \nonumber\\
&\leq& \frac{C'}{m}\left( \frac{(\log\log n)^{1/4}(\log n)^{1/2}}{n^{1/4}}\right) + \frac{n^{1/2}\rho(\gamma_n)}{m}\nonumber\\
&\leq& \frac{(\log\log n)^{1/4}(\log n)^{1/2}}{n^{1/4}m}\left(\frac{\rho(\gamma_n)(\log \log n)^{1/2}}{(\gamma_n)^{3/2}(\log n)^{1/2}}+C'\right)\nonumber\\
&\leq&\frac{t'_n}{m}\left(\frac{\rho(\gamma_n)(\log \log(1/\gamma_n))^{1/2}}{\gamma_n^{3/2}(\log(1/\gamma_n))^{1/2}}\left(\frac{\log(1/\gamma_n)(\log\log n)}{\log\log(1/\gamma_n)(\log n)}\right)^{1/2}+C'\right)\nonumber
\end{eqnarray}
with $t'_n = n^{-1/4}(\log n)^{1/2}(\log\log n)^{1/4}$. We have
$$\lim_{n \to \infty} \left(\frac{\log(1/\gamma_n)(\log\log n)}{\log\log(1/\gamma_n)(\log n)}\right)^{1/2} =  \frac{1}{2} $$
and by \cond{3}
$$\lim_{n \to \infty}\frac{\rho(\gamma_n)(\log \log(1/\gamma_n))^{1/2}}{\gamma_n^{3/2}(\log(1/\gamma_n))^{1/2}}=\lim_{n \to \infty} \frac{\rho(\gamma_n)}{\gamma_n^{3/2}\sqrt{\log(1/\gamma_n)}} = 0.$$
Consequently, under \cond{1}, \cond{2} and \cond{3} we have proved (\ref{BKK1}).
\begin{remark}\label{Kremrho}
Under \cond{1} and  \cond{2} we have
$$\lim_{n \to \infty}n^{1/2}\rho(\gamma_n) = 0 $$
without the additional assumptions \cond{3} or \condp{3}, we can only state that there exists $n(\omega)>0$ such that for all  $n \geq n(\omega)$
$$\left\|\D_n+\frac{\E_n}{h}\right\|_{\Sd\times\Delta} = O_{a.s.}\left(\left(n^{1/2}\rho(\gamma_n)\right) \vee \frac{(\log\log n)^{1/4}(\log n)^{1/2}}{n^{1/4}}\right).$$
\end{remark}

\paragraph{\bf Step V}\textit{(Rate of the Gaussian approximation).}
We have shown that under \cond{1} and \cond{2}, we can construct  on the same probability space $\left(  \Omega,\mathcal{T},\mathbb{P}\right)$  an i.i.d. sequence $(X_{n})$ of law $P$ and $(\mathbb{G}_{n})$ of versions of $\mathbb{G}$ such that for all $u \in \Sd$ and $\alpha \in \Delta$, we have
\begin{equation*}
\D_n(u,\alpha) = -\frac{\G_n(u,\alpha)}{h(u,\alpha)} + \Z_n(u,\alpha)
\end{equation*}
$\lim_{n \to \infty}  \norme{\Z_n}_{\Sd\times \Delta} = 0 \  a.s.$
We have
$$\norme{ \D_n + \frac{\G_n}{h}}_{\Sd \times \Delta} \leq \norme{ \D_n + \frac{\E_n}{h}}_{\Sd \times \Delta} + \frac{1}{m}\norme{{\E_n} - {\G_n}}_{\Sd \times \Delta}.\\
$$
Under \cond{3}, by (\ref{BKK1}), there exists $n(\omega)>0$ and $C''_{\Delta}>0$ such that for all $n \geq n(\omega)$, we have
$$\norme{ \D_n + \frac{\E_n}{h}}_{\Sd \times \Delta} \leq C''_{\Delta}\frac{(\log\log n)^{1/4}(\log n)^{1/2}}{n^{1/4}}$$
and by (\ref{SAPs}) and the Borel-Cantelli lemma, we have
$$\left\|\E_n-\G_n\right\|_{\Sd\times\Delta} = O_{a.s.}\left(\frac{(\log n)^{w_d}}{n^{v_d}}\right).$$
\paragraph{Conclusion}Under $\cond{1}$ and $\cond{2}$  one can construct on the same probability space $\left(  \Omega,\mathcal{T},\mathbb{P}\right)$ an i.i.d. sequence $X_{n}$ with distribution $P$ and a sequence $\mathbb{G}_{n}$ of versions of $\mathbb{G}_{P}$ in such a way that for $O\in\mathbb{R}^{d}$, $ u\in\mathbb{S}_{d-1}$, $\alpha\in\Delta$
\begin{equation*}
Y_{n}(O,u,\alpha)=Y(O,u,\alpha)+\frac{\mathbb{G}_{n}(u,\alpha)}{\sqrt{n}}+\frac{\mathbb{Z}_{n}(u,\alpha)}{\sqrt{n}}
\end{equation*}
where $\lim_{n\rightarrow\infty}\ \left\Vert \mathbb{Z}_{n}\right\Vert _{S_{d-1}\times\Delta}=0 \ a.s.$
If $P$ moreover satisfies \cond{3} then $\mathbb{G}_{n}$ can be constructed such that for $v_{d}=1/(2+10d)$ and $w_{d}=(4 + 10d)/(4 + 20d)$, there exists $n_{\theta}(m,M,d)>0$ such that we have, for all $n>n_{\theta}$,
\begin{equation*}
\mathbb{P}\left(  \left\Vert \mathbb{Z}_{n}\right\Vert _{S_{d-1}\times\Delta
}\geq c_{\theta}\frac{(\log n)^{w_{d}}}{n^{v_{d}}}\right)  \leq\frac
{1}{n^{\theta}}. 
\end{equation*}
\section{Appendix}
\subsection{Technical Lemmas}
\begin{lemma}\label{lem-tech1}
Let $O \in \R^d$, $u \in \Sd$, $y \in \R$ and $y_{\infty} \in \overline{\R}$ with $y \neq y_{\infty}$.
For every sequence $(u_n)_{n \in \N}$ of $\Sd$ with $u_n \neq u$ and $ u_n \to u$ and for every sequence $(y_n)_{n \in \N}$ of reals with $y_n \neq y_{\infty}$ and $ y_n \to y_{\infty}$, there exists an increasing sequence of integers $(n_k)_{k \in \N}$ with $n_k \to \infty$ and a sequence of sets $(H_k)_{k \geq 1}$ such that
\[
H_{k+1}\subset H_{k},\quad%
{\textstyle\bigcap\nolimits_{k\geqslant1}}
H_{k}=\emptyset,\quad H(O,u_{n_{k}},y_{n_{k}})\backslash H(O,u,y)\subset H_{k}\subset H(O,u,y).
\]
\end{lemma}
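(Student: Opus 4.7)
As literally typeset, the conclusion $H(O,u_{n_k},y_{n_k}) \setminus H(O,u,y) \subset H_k \subset H(O,u,y)$ forces $H(O,u_{n_k},y_{n_k}) \setminus H(O,u,y) = \emptyset$, since a set difference $A\setminus B$ is always disjoint from $B$. That would require $H(O,u_{n_k},y_{n_k}) \subset H(O,u,y)$, impossible for $u_{n_k} \neq u$ (two closed half-spaces with distinct non-antipodal normal vectors are never nested). Tracing the single invocation of the lemma, in the proof of Theorem \ref{surfaceregularity}, one reads there the transposed inclusion $H(O,u,y') \setminus H(u_{n_{j(k)}},\alpha_{n_{j(k)}}) \subset H_k \subset H(O,u,y')$. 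I therefore read the stated conclusion as a typographical transposition of the two half-spaces inside the set difference and plan to prove
\[
H(O,u,y) \setminus H(O,u_{n_k},y_{n_k}) \subset H_k \subset H(O,u,y),
\]
under the case $y < y_\infty$ actually used in Theorem \ref{surfaceregularity}; the case $y > y_\infty$ is symmetric and is the form tacitly needed in the ``analogous'' second branch of that proof.

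The driving observation is pointwise: when $y < y_\infty$, each point of $H(O,u,y)$ is eventually inside $H(O,u_n,y_n)$, so $A_n := H(O,u,y) \setminus H(O,u_n,y_n)$ vanishes pointwise. Indeed, fix $x \in H(O,u,y)$, so $\langle x-O,u\rangle \leq y$. Since $u_n \to u$, $\langle x-O,u_n\rangle \to \langle x-O,u\rangle \leq y$, while $y_n \to y_\infty > y$ gives $y_n > y$ for $n$ large; hence $\langle x-O,u_n\rangle \leq y < y_n$ eventually, so $x \in H(O,u_n,y_n)$. Points outside $H(O,u,y)$ are never in $A_n$ by definition. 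Thus every $x$ lies in $A_n$ for only finitely many $n$, i.e., $\limsup_n A_n = \emptyset$.

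Setting $n_k := k$ and $H_k := \bigcup_{j \geq k} A_j$, the four required properties are immediate: each $A_j \subset H(O,u,y)$, hence $H_k \subset H(O,u,y)$; the nesting $H_{k+1} \subset H_k$ is tautological; $A_k \subset H_k$ by construction; and $\bigcap_k H_k = \limsup_n A_n = \emptyset$ by the pointwise vanishing above. The only real obstacle in the whole lemma is the identification of the typo; once the intended inclusion is in hand, the proof reduces to a short pointwise-convergence argument packaged by the standard $\limsup$ construction, and no genuine subsequence extraction is required (the trivial choice $n_k=k$ works).
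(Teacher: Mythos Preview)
Your diagnosis of the typo is exactly right: the paper's own proof of the lemma establishes $H(O,u,y)\setminus H(O,u_{n_k},y_{n_k})\subset H_k$, not the transposed form printed in the statement, and this is also the form invoked in the proof of \thm \ref{surfaceregularity}. Your proof of the corrected statement is correct. One cosmetic point: from $\langle x-O,u_n\rangle\to\langle x-O,u\rangle\le y$ you cannot conclude $\langle x-O,u_n\rangle\le y$ eventually, but you do get $\langle x-O,u_n\rangle\le y_n$ eventually since $y_n$ is eventually bounded below by some $y+\delta$ with $\delta>0$; your final conclusion is unaffected.

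Your route is genuinely different from the paper's. The paper argues geometrically: it first shows that the $(d-2)$-dimensional intersection $\partial H(O,u,y)\cap\partial H(O,u_{n},y_{n})$ drifts off to infinity, extracts a subsequence along which this distance is monotone, and then builds $H_k$ by revolving $H(O,u_{n_k},y_{n_k})$ about the axis $(O,u)$, setting $H_k=H(O,u,y)\setminus\bigcap_{v}H(O,v,y_{n_k})$ over all $v$ making the same angle with $u$ as $u_{n_k}$. This gives concrete rotationally symmetric sets and a nice picture, at the cost of a two-step argument and an actual subsequence extraction. Your argument bypasses all the geometry: the pointwise observation $\limsup_n A_n=\emptyset$ plus the tail union $H_k=\bigcup_{j\ge k}A_j$ delivers every required property in one stroke, with the trivial choice $n_k=k$. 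For the purposes of \thm \ref{surfaceregularity}, which only uses monotonicity, $H_k\subset H(O,u,y)$, $\bigcap_k H_k=\emptyset$, and the inclusion of $A_k$, your construction is fully adequate and considerably shorter.
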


\begin{proof}
Let $u \in \Sd$, $y \in \R$ and $y_{\infty} \in \overline{\R}$ with $y < y_{\infty}$, and let $p_{H(O,u,y)}$ denote the orthogonal projection on $\partial H(O,u,y)$. We denote $Q = p_{H(O,u,y)}(O)= O +yu$. For $(u_n)$ in $\Sd$ with $u_n \neq u$ and $\lim_{n \to \infty} u_n = u$ and $(y_n)$ sequence of reals with $y_n \neq y_{\infty}$ and $\lim_{n \to \infty} y_n = y_{\infty}$, one can extract $\left( (u_{m_k}, y_{m_k})\right)_{k\geq 1}$ in $\Sd\times\R$ such that $(\sca{u_{m_k}}{u})_{k\geq 1}$ is increasing with $\lim_{k \to \infty}\sca{u_{m_k}}{u} = 1$.
\begin{figure}[htbp!]
\begin{center}
  \includegraphics[scale = 0.5]{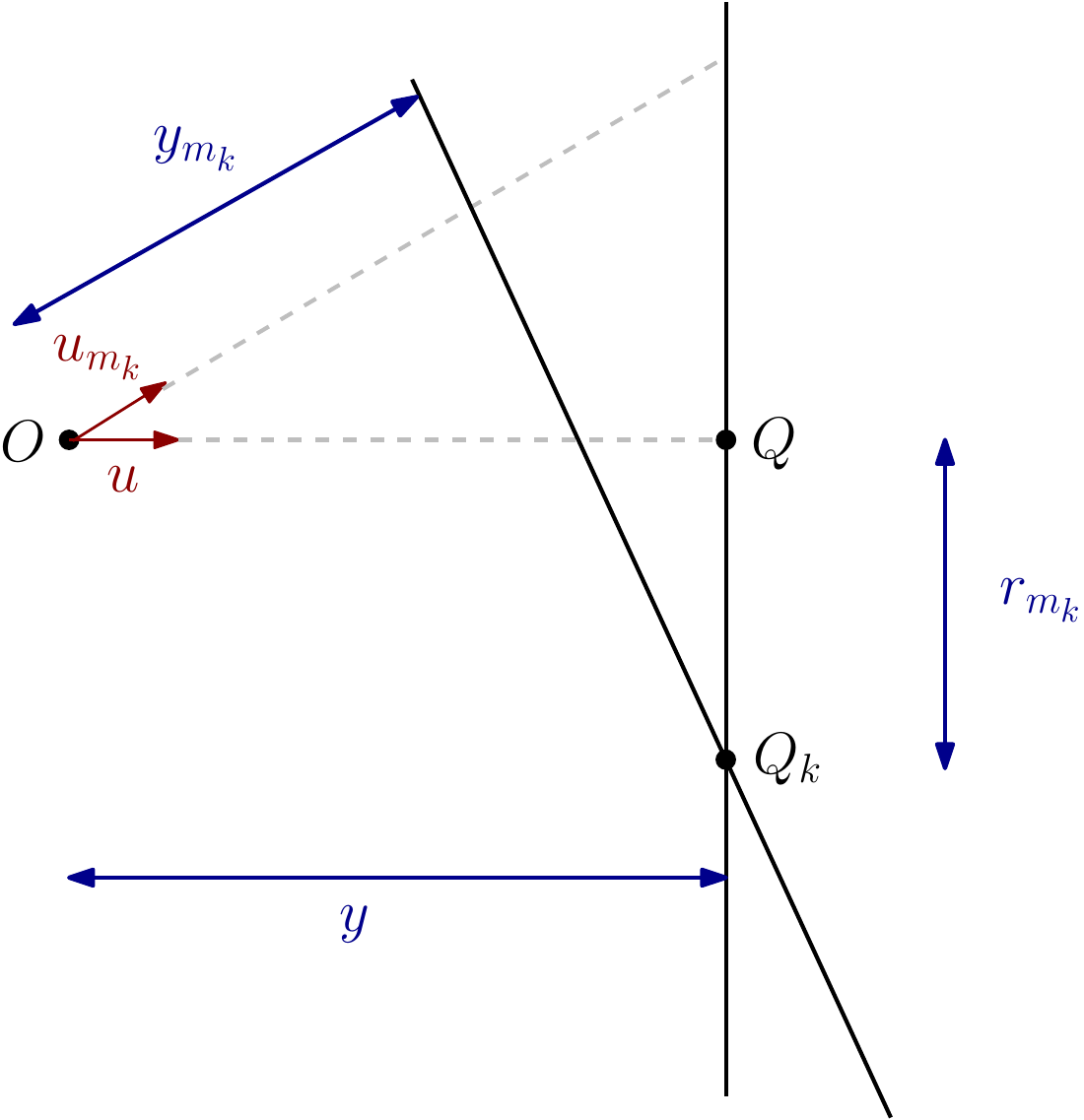}
\end{center}
\end{figure}

\noindent We set $ D_{m_k}  = \partial H(O,u,y) \cap \partial H(O,u_{m_k},y_{m_k})$, which is not empty since $u_{m_k} \neq u$ and it is an hyper-plan of dimension $d-2$. Denote the distance between $Q$ and $D_{m_k}$ by  $r_{m_k} =  \inf_{Q_k \in D_{m_k}}\norme{Q_k-Q}. $
~\\
\paragraph{\bf Step I} We show that $$\lim_{k \to \infty}r_{m_k}=+ \infty.$$
Fix $Q' = O + y'u$ with $y<y'<y_{\infty}$ an element from the line $(O,u)$ and $A'_k = H(O,u_{m_k},y_{m_k}')$ the half-space of normal $u_{m_k}$ intersecting $(O,u)$ exactly in $Q'$, i.e. $$A'_k \cap (O,u) = {Q'}. $$
we can easily see that $y'_{m_k}  = y' \sca{u_{m_k}}{u}$, hence $(y_{m_k})_{k\geq 1}$ is increasing with $y'_{m_k}  \to y' .$ For $k$ big enough, $y'_{m_k}<y_{m_k}$ , thus
 $A'_k \subsetneq H(O,u_{m_k},y_{m_k}). $
From now on, denote $$ D'_{m_k}  = \partial H(O,u,y) \cap \partial A'_k, \quad r'_{m_k} =  \inf_{Q'_k \in D'_{m_k}}\norme{Q'_k-Q}.$$ 
By observing that $A'_k \subsetneq H(O,u_{m_k},y_{m_k}), $ we have $r'_{m_k} < r_{m_k}$. Consequently,
 $ r'_{m_k} = \dfrac{y'-y}{\tan(\arccos(\sca{u_{m_k}}{u}))} $
and $ r'_{m_k} \to \infty,$ hence $ r_{m_k} \to \infty. $ Now, we can extract an increasing subsequence  $(r_{n_k})_{k \geq 1}$ with
\begin{equation*}\label{rnk}
\lim_{k \to \infty} r_{n_k} = + \infty.
\end{equation*}
Step I figure
\begin{center}
  \includegraphics[scale = 0.5]{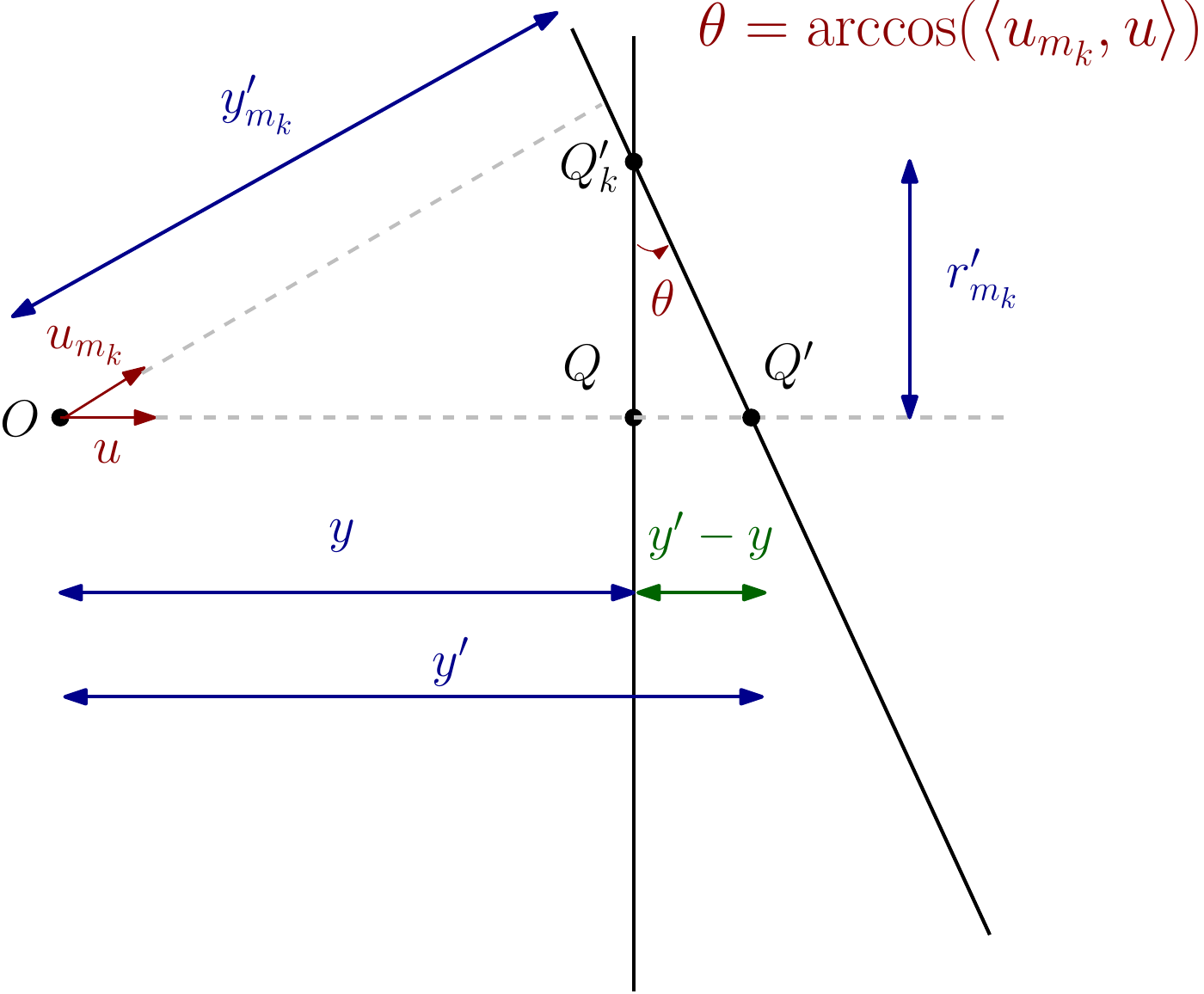}
\end{center}
~\\
\paragraph{\bf Step II} We construct  $(H_k)_{k \geq 1}$ of \lemme \ref{lem-tech1}.
~\\
Let $k\geq 1$, define the set of directions $\V_k = \left \{v \in \Sd: \ \ \sca{v}{u} = \sca{u_{n_k}}{u}  \right\}$
and the set of half-spaces $\U_k = \left \{H(O,v,y_{n_k}): \ \ v \in \V_k  \right\} $ obtained  by revolution of $H(O,u_{n_k},y_{n_k})$ around $(O,u)$. Finally, define
$$\T_k = \bigcap_{v \in \V_k} H(O,v,y_{n_k}) =  \bigcap_{\hat{H} \in \U_k} \hat{H}$$
and
$$H_k =  \bigcup_{v \in \V_k} H(O,u,y)\setminus H(O,v,y_{n_k}) =  H(O,u,y)\setminus\T_k.$$
As $r_{n_k} \uparrow +\infty$ we have $H_{k+1} \subset H_k.$ And since $u_{n_k} \neq u$ then for all $\hat{H}\in \U_k$ we have $ H(O,u,y) \cap \hat{H} \neq \emptyset$ in particular, $H_k \neq \emptyset.$
We have $H_k \subset H(O,u,y)$ and $u_{n_k} \in \V_k$  we get by definition of $H_k$, that $H(O,u,y)\setminus H(O,u_{n_k},y_{n_k})\subset H_k$. 
\begin{center}
  \includegraphics[scale = 0.5]{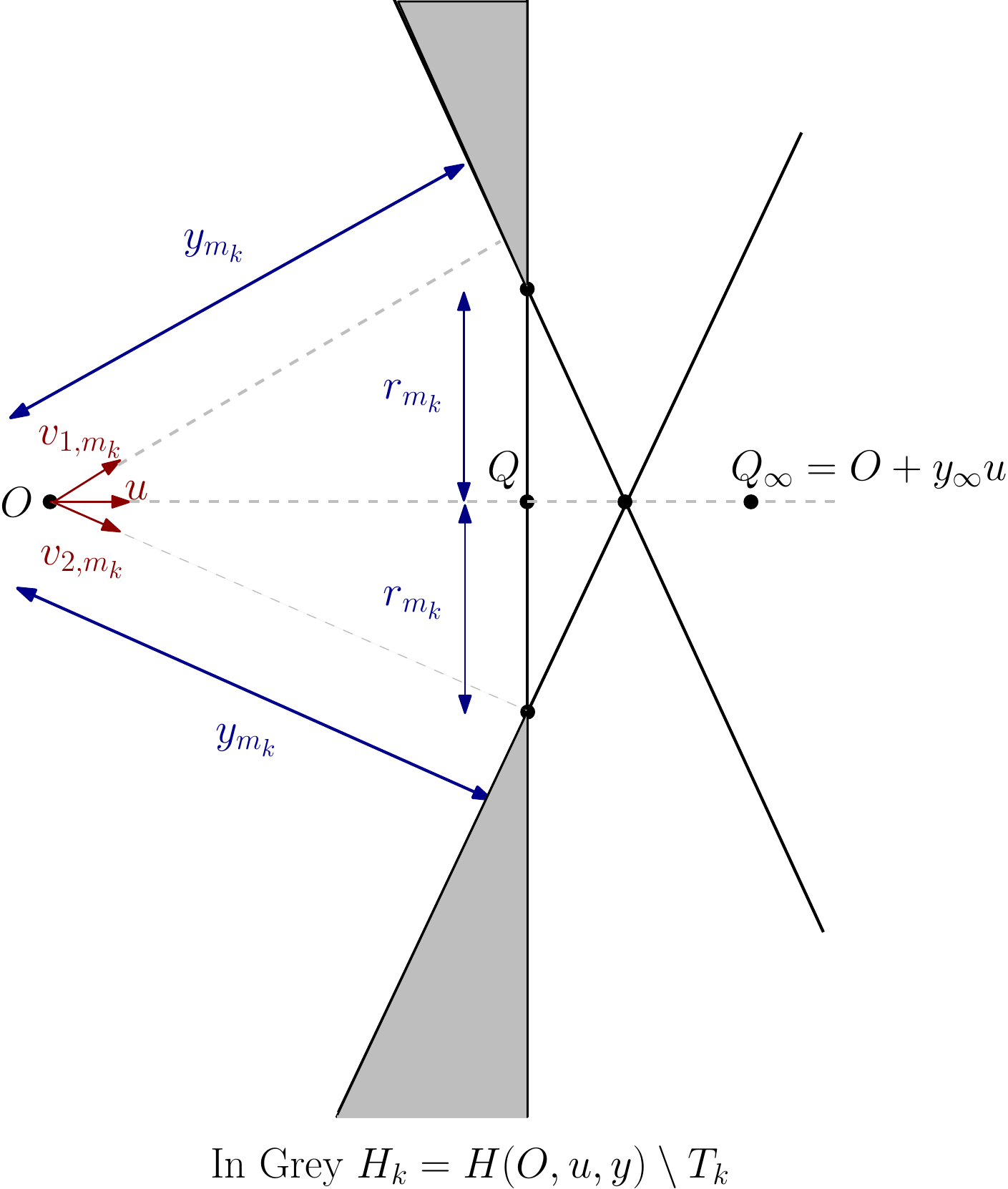}
\end{center}
~\\
We have $\lim_{k \to \infty}\sca{u_{n_k}}{u} = 1$  then $V_{\infty} = \{u\}$ and $\lim_{k \to \infty} y_{n_k}=y$. Moreover $\bigcap_{k \geq 1} H_k = \emptyset$. The case where $y_{\infty}<y' < y$ is analogous.
\end{proof}
\begin{lemma}\label{yny}
Under \cond{1}, almost surely there exists $C_{\Delta}>0$ and $n(\omega) >3$ such that for all $n\geq n(\omega)$ we have
$$\norme{Y_n-Y}_{\Sd\times\Delta} \leq C_{\Delta}\sqrt{\frac{\log \log n}{n}}.$$
\end{lemma}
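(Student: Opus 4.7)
The plan is to obtain Lemma \ref{yny} as a direct corollary of \thm \ref{LLNU} together with the one-sided Lipschitz lower bound on $P$-mass of hyperbands that is built into \cond{1}. The key observation is that \cond{1} upgrades the abstract modulus $\Psi$ appearing in \thm \ref{LLNU} into a linear modulus, after which the statement becomes the classical $\sqrt{(\log\log n)/n}$ rate.

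More precisely, I would first verify that \cond{1} implies both \condpm{-} and \condpm{+} with the quantitative lower bound
\[
P(H(O,u,y,z)) \geq m(z-y), \qquad y<z,\ u\in\Sd,\ y,z\in\mathcal{Y}_{\Delta}(O,u),
\]
as already observed in the paper just after the introduction of \cond{1}. Hence $\Psi(\eps)\geq m\eps$ for every $\eps\in(0,\eps^{+})$, and inverting this inequality yields $\Psi^{-1}(t)\leq t/m$ for every $t$ in the range of $\Psi$. In particular, for $n$ large enough that $\sqrt{(\log\log n)/n}$ lies in this range, we have
\[
\Psi^{-1}\!\left(\sqrt{\tfrac{\log\log n}{n}}\right)\leq \frac{1}{m}\sqrt{\tfrac{\log\log n}{n}}.
\]

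Next I would invoke \thm \ref{LLNU}, applicable since \cond{1} forces \condpm{-} and \condpm{+}, to conclude that almost surely
\[
\limsup_{n\to\infty}\ \frac{\norme{Y_n-Y}_{\Sd\times\Delta}}{\Psi^{-1}\!\left(\sqrt{(\log\log n)/n}\right)} \leq K
\]
for some finite constant $K$ (depending only on the VC-dimension and the universal constant in Alexander's LIL used in the proof of \thm \ref{LLNU}). Combining the two displays above, I would obtain
\[
\limsup_{n\to\infty}\ \frac{\norme{Y_n-Y}_{\Sd\times\Delta}}{\sqrt{(\log\log n)/n}} \leq \frac{K}{m} \quad a.s.
\]
The conclusion of the lemma then follows from the definition of $\limsup$: for almost every $\omega$ there exists $n(\omega)\geq 3$ such that for all $n\geq n(\omega)$ the ratio is bounded by, say, $2K/m$, so that one may take $C_{\Delta}=2K/m$.

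No real obstacle arises since the work has already been done in \thm \ref{LLNU}; the only point requiring care is making sure $\sqrt{(\log\log n)/n}$ eventually falls into the range where $\Psi^{-1}$ is defined and satisfies the linear bound, which is automatic for $n$ large enough because $\Psi$ is continuous with $\Psi(0)=0$ by \prop \ref{psiequiv} and strictly increasing by \prop \ref{psicont}, while the image of $\Psi$ on $[0,\eps^{+})$ contains a right neighborhood of $0$ thanks to the lower bound $\Psi(\eps)\geq m\eps$ from \cond{1}.
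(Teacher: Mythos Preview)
Your proposal is correct and follows essentially the same route as the paper: use \cond{1} to get $\Psi(\eps)\geq m\eps$ and hence $\Psi^{-1}(t)\leq t/m$, then plug this into \thm \ref{LLNU} to turn the abstract rate $\Psi^{-1}\!\left(\sqrt{(\log\log n)/n}\right)$ into the classical $\sqrt{(\log\log n)/n}$ rate with $C_{\Delta}=c_{\Delta}/m$. Your treatment of the domain of $\Psi^{-1}$ and the passage from the $\limsup$ bound to an eventual bound is slightly more explicit than the paper's, but the argument is the same.
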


\begin{proof}
Under \cond{1}, we have
$$m\eps\leq \Psi(\eps)\leq M\eps, \ \ \ \eps \geq 0.$$
By taking $\eps = \Psi^{-1}\left(\sqrt{\log \log n/n}\right)$, we obtain by \prop \ref{psicont} that for all $n>3$
\begin{equation}\label{taksim}
\Psi^{-1}\left(\sqrt{\frac{\log \log n}{n}}\right)\leq \frac{1}{m}\sqrt{\frac{\log \log n}{n}}.
\end{equation}
and by \thm \ref{LLNU}, we know that almost surely there exists $c_{\Delta}>0$ and $n(\omega)>3$ such that for all $n \geq n(\omega)$ we have
$$\norme{Y_n-Y}_{\Sd\times\Delta} \leq c_{\Delta}\Psi^{-1}\left(\sqrt{\frac{\log \log n}{n}} \right) $$
and by (\ref{taksim})  for $C_{\Delta} = c_{\Delta}/m$, we get
$$\norme{Y_n-Y}_{\Sd\times\Delta} \leq C_{\Delta}\sqrt{\frac{\log \log n}{n}}.$$
\end{proof}

\subsection{Tools needed in the proof of main theorem}
Let $\cF$ be a class of measurable real valued functions of $\cX$ , suppose that
\begin{itemize}
\item[\textbf{(F.i)}] for $S_*>0$ , for all $f \in \cF$, $\sup_{x \in \cX} \abs{f(x)} \leq S_*/2$.
\item[\textbf{(F.ii)}] The class $\cF$ is point-wide measurable, i.e. there exists a countable subclass $\cF_{\infty}$ of  $\cF$ such that for every $f$ there exists  $(f_m)_{m \in \N} \subset \cF_{\infty}^{\N}$ for which $\lim_{m \to \infty}f_m(x) = f(x)$ for all $x \in \cX$.
\end{itemize}
the \textbf{(F.ii)} is set to avoid measurability problems and the use of outer integrals.
\begin{theorem}[Talagrand  Inequality \cite{Tal94}]\label{Talagrandee}
If  $\cG$ satisfies \textbf{(F.i)} and \textbf{(F.ii)} then for all  $n \geq 1$ and $t >0$ we have for finite constants $A_0 >0$ and $A_1>0$
\begin{eqnarray*}
&&\P\left\{\norme{\alpha_n}_{\cG}\geq A_0\left(\E\left(\left\|\frac{1}{\sqrt{n}}\sum_{i=1}^{n}\tau_ig(X_i)\right\|_{\cG}\right) +  t\right) \right\} \\
&& \ \ \ \ \ \ \ \ \ \leq 2 \exp\left(-\frac{A_1 t^2}{\sigma_{\cG}^2}\right) + 2 \exp\left(- \frac{A_1 t \sqrt{n}}{S_*}\right)
\end{eqnarray*}
where $\sigma_{\cG}^2 = \sup_{g \in \cG} Var(g(X))$, and $S_*$  from \textbf{(F.i)}.
\end{theorem}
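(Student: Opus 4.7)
The plan is to derive this concentration statement by combining two standard ingredients: symmetrization, to compare the supremum of the empirical process with its Rademacher analogue, and a concentration inequality for the supremum around its mean, which is the genuine content of Talagrand's 1994 theorem.

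First, I would write $Z_n = \norme{\alpha_n}_{\cG}$ and $M_n = \E[Z_n]$. The classical symmetrization lemma (Giné--Zinn) yields, under \textbf{(F.i)} and \textbf{(F.ii)},
\[
M_n \;\leq\; 2\,\E\Bigl\|\tfrac{1}{\sqrt n}\sum_{i=1}^n \tau_i\,g(X_i)\Bigr\|_{\cG},
\]
where $(\tau_i)$ is an independent Rademacher sequence. This lets me replace, up to a universal factor absorbed into $A_0$, the (unknown) quantity $M_n$ by the symmetrized expectation appearing in the statement. The measurability hypothesis \textbf{(F.ii)} is what makes all these suprema and the symmetrization legitimate.

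Second, I would invoke a Bernstein-type concentration inequality for $Z_n$ around $M_n$: for some universal $A_1>0$,
\[
\P\bigl(Z_n \geq M_n + t\bigr) \;\leq\; 2\exp\!\Bigl(-\tfrac{A_1 t^2}{\sigma_{\cG}^2}\Bigr) + 2\exp\!\Bigl(-\tfrac{A_1 t \sqrt n}{S_*}\Bigr),
\]
where $\sigma_{\cG}^2=\sup_g \mathrm{Var}(g(X))$ and $S_*/2$ is the uniform bound from \textbf{(F.i)}. Coupling this tail bound with the symmetrized bound on $M_n$ above and enlarging $A_0$ by at most a factor $2$, one obtains the statement: the probability $\P(Z_n \geq A_0(\textit{symmetrized mean}+t))$ is dominated by $\P(Z_n \geq M_n + A_0' t)$ for a suitable $A_0'$, and the right-hand side is controlled by the two exponentials displayed.

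The hard part is of course the concentration inequality itself, which is the original contribution of Talagrand. The two standard roads are (i) Talagrand's isoperimetric inequality on product probability spaces, iterated on $n$ via a convex-hull argument, and (ii) Ledoux's entropy method, which applies a modified logarithmic Sobolev inequality, tensorizes over the product $X_1,\dots,X_n$, and then runs the Herbst moment-generating function argument to extract a Bernstein-type tail. Following the latter road (as sharpened by Massart and Rio) is cleaner and produces both the sub-Gaussian term driven by $\sigma_{\cG}^2$ and the sub-exponential term driven by $S_*$ simultaneously; the chief technical difficulty is controlling the non-symmetric entropy functional of $\exp(\lambda Z_n)$ and carefully optimizing in $\lambda$ so as to preserve the two-regime structure of the tail.
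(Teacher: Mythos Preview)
The paper does not prove this theorem; it is stated in the Appendix as a tool and attributed directly to Talagrand \cite{Tal94}, with no accompanying argument. So there is no ``paper's own proof'' to compare against: the authors simply invoke the result as a black box when bounding the modulus of continuity of $\Lambda_n$ in Proposition~\ref{lemmeAlphaN}.

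Your outline is a correct high-level roadmap for how such an inequality is established: symmetrization to pass from $\E\norme{\alpha_n}_{\cG}$ to the Rademacher expectation, followed by a Bernstein-type deviation bound for the supremum around its mean, with the two exponential terms reflecting the sub-Gaussian (variance-driven) and sub-exponential (boundedness-driven) regimes. You also correctly flag that the substantive work lies entirely in the second step, and that the two standard routes are Talagrand's convex-distance isoperimetric inequality or the Ledoux--Massart entropy method. That said, your proposal stops at naming these routes rather than carrying either one through, so as written it is a sketch rather than a proof; the genuinely hard estimates (tensorization of the entropy, the self-bounding control of the conditional variance of the supremum, and the Herbst optimization) are precisely what you would still owe. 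Since the paper itself treats the inequality as an external input, this level of detail is consistent with how the result is used here.
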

The constants $A_0,A_1$ are universals and do not depend in $\cG$ and $S_*$.
\begin{remark}\label{remrem1}
Let $n\geq 3$, $C>1$, for $\eps_n = C\sqrt{\dfrac{\log \log n}{n}}$ , we set
$$\cB_n = \bigcup_{0<\eps<\eps_n} \cB_{\eps}, \quad \cF_n = \{\1_{B}: \ B \in \cB_n\}.$$
$\cF_n$  satisfies \textbf{(F.i)} and \textbf{(F.ii)}, as a matter of fact
\begin{itemize}
\item[-] for all $g \in \cF_n$ we have $\sup_{x \in \cX} \abs{g(x)} \leq1=2/2$, thus, with notations of \textbf{(F.i)} we have $S_* = 2.$
\item[-] for all $\eps>0$, $O\in \R^d$, $u\in \Sd$ and $y \in \cY_{\Delta}(O,u)$ such that $H(O,u,y,y+\eps) \in \B_n$ there exists a sequence of rational numbers $\delta_k \to \eps$, and a sequence of $u_k \to u$ of $\Q_{d-1}=\{v\in\Q^2: \norme{v}_2 =1\}$ and a sequence of rational numbers $y_k \to y$ such that for all $x \in\R^d$ we have
$$\lim_{k \to \infty}g_k(x) = \lim_{k \to \infty}\1_{H(O,u_k,y_k,y_k+\delta_k)}(x) = g(x)=\1_{H(O,u,y,y+\delta)}(x)$$
\end{itemize}
\end{remark}
\begin{theorem}[Moments inequality \cite{EinMas00}, \cite{Gin01}]\label{moments}
Let $\cG$ satisfy \textbf{(F.i)} and \textbf{(F.ii)} with envelope $G$ and be such that for some positive constants $\beta, v, c >1$ and $\sigma \leq 1/(8c)$ the following conditions holds
$$\E(G^2(X)) \leq \beta^2; \ \ N_G(\eps,\cG) \leq c \eps^{-v}, 0<\eps<1; \ \ \sup_{g \in \cG}\E(g^2(X)) \leq \sigma^2;$$
and
$$\sup_{g\in \cG}\sup_{x\in\cX} \abs{g(x)}\leq\frac{\sqrt{n\sigma^2/\ln(\beta\vee1/\sigma)}}{2\sqrt{v+1}}.$$
Then we have for a universal constant $A_2$ not depending on $\beta$,
$$\E\left(\left\|\frac{1}{\sqrt{n}}\sum_{i=1}^{n}\tau_ig(X_i)\right\|_{\cG}\right) \leq  A_2 \sqrt{v\sigma^2\ln(\beta\vee1/\sigma)}.$$
\end{theorem}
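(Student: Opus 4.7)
The plan is to prove the inequality by the now-standard route used in Einmahl–Mason and Giné–Guillou: reduce the Rademacher process to a sub-Gaussian process on the random metric space $(\cG,d_n)$ with $d_n(g,h)^2=P_n(g-h)^2$, apply Dudley chaining under the uniform entropy hypothesis, and then use the envelope and sup-norm conditions to produce the correct $\sigma$ inside and $\log(\beta\vee 1/\sigma)$ outside.

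First I would condition on $X_1,\dots,X_n$. Then $g\mapsto R_n(g):=n^{-1/2}\sum_{i\le n}\tau_i g(X_i)$ is, by Hoeffding's lemma applied coordinatewise in $\tau$, a sub-Gaussian process on $(\cG,d_n)$, and the classical Dudley bound gives
\[
\E_\tau\|R_n\|_\cG\;\le\; K\int_0^{D_n}\sqrt{\log N(\eps,\cG,d_n)}\,d\eps,\qquad D_n:=\sup_{g\in\cG}\|g\|_{L^2(P_n)}.
\]
The uniform entropy hypothesis transfers to the empirical metric in the standard way: for every probability measure $Q$ one has $N(\eps\|G\|_{L^2(Q)},\cG,L^2(Q))\le N_G(\eps,\cG)\le c\eps^{-v}$, hence $\log N(\eps,\cG,d_n)\le v\log(c\|G\|_{L^2(P_n)}/\eps)$. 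Plugging this into the integral and using $\int_0^D\sqrt{\log(cA/\eps)}\,d\eps\le C D\sqrt{\log(cA/D)}$ yields, pointwise in the sample,
\[
\E_\tau\|R_n\|_\cG\;\le\; K'\,D_n\sqrt{v\log(c\|G\|_{L^2(P_n)}/D_n)}.
\]

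Taking expectation in $X$ and using $\E\|G\|^2_{L^2(P_n)}\le\beta^2$ controls the $\|G\|_{L^2(P_n)}$ factor through the log, and by monotonicity the logarithm may be replaced by $\log(\beta\vee 1/\sigma)$. The remaining and genuinely delicate point is to replace the random $D_n$ by $\sigma$ inside the square root: a naive Jensen step gives $\E D_n\le\beta$ and produces a spurious $\sqrt{\log\beta}$ prefactor. I would handle this by a peeling/self-bounding argument, applying Talagrand's inequality (Theorem~\ref{Talagrandee} of this appendix, valid for $\cG^2$ which inherits (F.i)–(F.ii) from $\cG$) to get
\[
\E\,\sup_{g\in\cG} P_n g^2\;\le\;\sigma^2+C_1\sigma\,\frac{\E\|R_n\|_\cG}{\sqrt n}+C_2\frac{S_\infty^2}{n},
\]
where $S_\infty=\sup_{g,x}|g(x)|$. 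Substituting this bound for $\E D_n^2$ into the previous display and solving the resulting quadratic inequality for $\E\|R_n\|_\cG$ produces the stated bound, provided that the term $S_\infty^2/n$ is absorbed by $\sigma^2/\log(\beta\vee 1/\sigma)$. This is exactly why the hypothesis requires $S_\infty\le\sqrt{n\sigma^2/\log(\beta\vee 1/\sigma)}/(2\sqrt{v+1})$: the factor $2\sqrt{v+1}$ is tuned so that the truncation term and the Bernstein-type contribution from Talagrand's inequality do not dominate the sub-Gaussian chaining contribution.

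The main obstacle is precisely this last calibration. An ungainly Dudley integration automatically yields $\sqrt{v\beta^2\log(\beta/\sigma)}$, which is far too large; only the peeling via Talagrand's inequality, balanced against the sup-norm hypothesis, produces the sharp $\sigma^2$ inside the square root. Once the quadratic inequality is set up correctly, extracting the final constant $A_2$ (universal, independent of $\beta$) is a routine algebraic verification, and the proof concludes.
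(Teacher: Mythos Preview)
The paper does not prove this theorem. It is stated in the appendix (Section~5.2, ``Tools needed in the proof of main theorem'') as a result quoted verbatim from Einmahl--Mason and Gin\'e--Guillou, with the citations \cite{EinMas00}, \cite{Gin01} in the theorem header itself and no accompanying proof. It is used purely as a black box in the proof of Proposition~\ref{lemmeAlphaN}. So there is no ``paper's own proof'' to compare your proposal against.

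That said, your sketch is a faithful outline of the argument in the cited references: conditioning on the sample to make the Rademacher process sub-Gaussian with respect to the empirical $L^2$ metric, Dudley chaining under the uniform entropy bound, and then the delicate replacement of the random diameter $D_n$ by $\sigma$ via a self-bounding/Talagrand step that exploits the sup-norm hypothesis. The identification of the last step as the genuine obstacle, and of the $2\sqrt{v+1}$ factor as the calibration constant that makes the quadratic inequality close, is accurate. If you intend to include a proof in place of the bare citation, this plan would work; otherwise, the paper's choice to simply cite the result is entirely standard for this kind of auxiliary maximal inequality.
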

\begin{remark}\label{remrem2}
Let $n\geq 3$, $g \in \cF_n$ and $G=1$ the envelope function of  $\cF_n$, we have 
$$\E(G(X)^2)=1\leq \beta = 2$$
Under \cond{1}  for all $B \in \cB_n$ 
$$E(\1_{X\in B}^2)=P(B)\leq M \eps_n=:\theta_n^2$$
since $\cB_n$ is a VC class of dimension $2d+1$ there exists $c>1$
$$N(\eps,\cF_n) \leq c \eps^{-v}, 0<\eps<1$$
with $v = 2((2d+1)-1)=4d.$ Finally, there exists $n_0>0$ such that for all $n>n_0$ we have
$$\frac{1}{\theta_n}= \frac{1}{\sqrt{M\eps_n}} = \left(\frac{1}{MC}\sqrt{\frac{n}{\log \log n}}\right)^{1/2} = \frac{1}{\sqrt{MC}}\cdot\frac{n^{1/4}}{(\log \log n)^{1/4}}>2.$$
consequently,  $\ln(\beta\vee1/\theta_n) = \log(2\vee1/\theta_n)= \log(1/\theta_n)$ and
$$n\theta_n^2 = CM\sqrt{n\log\log n}$$
so $$\frac{n\theta_n^2}{\log(\beta\vee\frac{1}{\theta_n})} =  \frac{ CM\sqrt{n\log\log n}}{\log\left(\frac{1}{\sqrt{MC}}\cdot\frac{n^{1/4}}{(\log \log n)^{1/4}}\right)} $$
thus
$$\lim_{n \to \infty}\frac{n\theta_n^2}{\log(\beta\vee\frac{1}{\theta_n})}=+\infty.$$
Since $\sup_{g\in \cF_n}\sup_{x\in\cX} \abs{g(x)}= 1$ there exists $n_1>n_0>0$ such that for $n\geq n_1$ we have
$$\sup_{g\in \cF_n}\sup_{x\in\cX} \abs{g(x)}\leq\frac{\sqrt{n\theta_n^2/\log(\beta\vee1/\theta_n)}}{2\sqrt{v+1}}.$$
\end{remark}
\begin{theorem}[Berthet and Mason 2006 \cite{BerMas06}]\label{BMBMBM}
Let $\cG$ be a VC class of dimension $VC(\cG)$ satisfying \textbf{(F.i)} and \textbf{(F.ii)} with envelope $G:=S_*/2$. For all $\lambda >1$ there exists $\rho(\lambda) >1$ such that for all $n\geq 1$ we can construct on the same probability space, the vectors $X_1,\cdots,X_n$ and a sequence $(\G_n)$ of versions of $\G$ such that
$$ \P\{\norme{\alpha_n - \G_n}_{\cG} > \rho(\lambda)n^{-v_1}(\log n )^{v_2}\} \leq n^{-\lambda}$$
with $v_1 = 1/(2+5v_0)$ and $v_2 = (4+5v_0)/(4+10v_0)$ and $v_0 = 2(VC(\cG)-1)$ and where $\G$ is P-Brownian Bridge indexed by $\cG$. 
\end{theorem}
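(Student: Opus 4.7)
The Komlós–Major–Tusnády construction on $[0,1]$ hinges on the quantile transform and on the partial-sum representation of order statistics by exponentials, and neither tool survives an abstract VC indexing. The workable substitute is a dyadic chaining argument married to a finite-dimensional Gaussian coupling at the deepest scale, in the spirit of Koltchinskii and of Rio. Because $\cG$ has finite VC dimension, the uniform covering bound $N(\eps,\cG,L^2(Q))\leq c\,\eps^{-v_0}$ holds for every probability $Q$, with $v_0=2(VC(\cG)-1)$. Fix $\eps_k=2^{-k}S_*$, select minimal $\eps_k$-nets $\cG_k\subset\cG$ of cardinality $N_k\leq c\,\eps_k^{-v_0}$, and denote by $\pi_k:\cG\to\cG_k$ an $L^2(P)$-nearest projection. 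The telescoping identity
\begin{equation*}
g=\pi_{k_0}g+\sum_{k=k_0}^{k_1-1}(\pi_{k+1}g-\pi_k g)+(g-\pi_{k_1}g)
\end{equation*}
drives the entire argument; the cut-off scale $k_1$ will be optimised last.

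The second ingredient is a multivariate Gaussian coupling for the empirical process restricted to the finest net $\cG_{k_1}$. Seen on $\cG_{k_1}$, $\alpha_n$ is an $N_{k_1}$-dimensional centred random vector with covariance $\Sigma_{g,g'}=P(gg')-P(g)P(g')$. The Koltchinskii–Yurinskii (equivalently Zaitsev, or multivariate Einmahl) coupling for sums of bounded i.i.d.\ vectors allows me to construct, on an enriched probability space, a centred Gaussian vector $\Gamma_{n,k_1}$ with covariance $\Sigma$ such that for any $\lambda>1$,
\begin{equation*}
\P\bigl\{\|\alpha_n-\Gamma_{n,k_1}\|_{\cG_{k_1}}>c_\lambda\,N_{k_1}^{5/2}\,n^{-1/2}\log n\bigr\}\leq n^{-\lambda}.
\end{equation*}
The polynomial $N^{5/2}$ is the unavoidable dimensional price of any known multivariate KMT-type statement for bounded summands. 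Next, extend $\Gamma_{n,k_1}$ to a full $P$-Brownian bridge $\G_n$ on $\cG$ by sampling, conditionally on $\Gamma_{n,k_1}$, an independent Gaussian field on $\cG$ carrying the correct conditional covariance of $\G$ given its values on $\cG_{k_1}$; by Gaussian disintegration, the resulting $\G_n$ is a bona fide version of $\G$.

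The third ingredient is control of both chaining tails. The residual classes $\cG^{(k)}=\{g-\pi_k g:g\in\cG\}$ have $L^2(P)$-radius $\eps_k$, sup-norm $\leq S_*$, and VC-type covering inherited from $\cG$. Applying the moment bound of Theorem~\ref{moments} together with Talagrand's deviation inequality of Theorem~\ref{Talagrandee} to $\cG^{(k_1)}$ yields, for every $\lambda>1$ and $n\eps_{k_1}^{2}\gtrsim\log n$,
\begin{equation*}
\P\Bigl\{\sup_{g\in\cG}|\alpha_n(g-\pi_{k_1}g)|>c_\lambda\,\eps_{k_1}\sqrt{v_0\log(1/\eps_{k_1})}\Bigr\}\leq n^{-\lambda},
\end{equation*}
while the matching control of $\sup_g|\G_n(g)-\G_n(\pi_{k_1}g)|$ comes from Dudley's entropy integral combined with Borell–Tsirelson concentration. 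Summing the two chaining remainders with the deepest-scale coupling produces
\begin{equation*}
\|\alpha_n-\G_n\|_{\cG}\leq c_\lambda\bigl(N_{k_1}^{5/2}n^{-1/2}\log n+\eps_{k_1}\sqrt{\log(1/\eps_{k_1})}\bigr).
\end{equation*}

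The final step is optimisation: with $N_{k_1}\asymp\eps_{k_1}^{-v_0}$, balancing the two terms via $\eps_{k_1}^{-5v_0/2}n^{-1/2}\log n\asymp\eps_{k_1}\sqrt{\log(1/\eps_{k_1})}$ forces $\eps_{k_1}\asymp n^{-1/(2+5v_0)}(\log n)^{\beta}$; a direct algebraic check then returns the announced rate $n^{-v_1}(\log n)^{v_2}$ with $v_1=1/(2+5v_0)$ and $v_2=(4+5v_0)/(4+10v_0)$. The chief obstacle is the second ingredient: the multivariate Gaussian coupling for sums of bounded $N$-dimensional vectors unavoidably loses a polynomial factor in $N$, and this factor in tension with the VC entropy exponent $v_0$ is precisely what prevents the $d=1$ KMT exponent $1/2$ from being attained. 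A secondary delicate point is the compatibility of the finite-dimensional coupled vector with a single $P$-Brownian bridge on $\cG$, resolved by freezing $\G_n|_{\cG_{k_1}}$ to $\Gamma_{n,k_1}$ and sampling the complement by Gaussian conditional sampling.
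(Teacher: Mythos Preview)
The paper does not prove this statement at all: Theorem~\ref{BMBMBM} sits in the appendix under ``Tools needed in the proof of main theorem'' and is quoted verbatim from Berthet and Mason \cite{BerMas06} as an external input, on the same footing as Talagrand's inequality and the moment bound. There is therefore no proof in the paper to compare your attempt against.

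That said, your sketch is a faithful outline of how the Berthet--Mason result is actually established. The three-step architecture --- (i) a dyadic $L^2(P)$-net chaining of the VC class, (ii) a Zaitsev/Einmahl-type multivariate Gaussian coupling on the finest net with polynomial dimensional cost $N_{k_1}^{5/2}$, then a conditional Gaussian extension to the full bridge, and (iii) Talagrand plus Borell--Tsirelson to control the chaining remainders on both the empirical and Gaussian sides --- is precisely the scheme of \cite{BerMas06}, itself in the lineage of Koltchinskii and Rio. Your balancing computation is also correct: equating $\eps^{-5v_0/2}n^{-1/2}\log n$ with $\eps\sqrt{\log(1/\eps)}$ and solving for $\eps=n^{-a}(\log n)^{b}$ gives $a=1/(2+5v_0)=v_1$ and then the common value $\eps\sqrt{\log(1/\eps)}\asymp n^{-v_1}(\log n)^{b+1/2}$ with $b+1/2=(4+5v_0)/(4+10v_0)=v_2$, exactly as announced. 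The only point your sketch glosses over --- and which \cite{BerMas06} handles carefully --- is that the Zaitsev coupling requires enlarging the probability space and that the construction must be done for each $n$ in a way compatible with a single i.i.d.\ sequence; this is a measure-theoretic bookkeeping issue rather than a conceptual gap.
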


\bibliographystyle{plain}
\bibliography{biblio2}

\end{document}